
\documentclass{article}

\usepackage{microtype}
\usepackage{graphicx}
\usepackage{subfigure}
\usepackage{booktabs} 

\usepackage{hyperref}

\usepackage[subfigure]{tocloft}



\usepackage[accepted]{icml2025}

\usepackage{amsmath}
\usepackage{amssymb}
\usepackage{mathtools}
\usepackage{amsthm}
\usepackage{amsfonts}

\usepackage[capitalize,noabbrev]{cleveref}

\graphicspath{{plot_camear_ready/}{plot_camear_ready/video_plots/}}

\theoremstyle{plain}
\newtheorem{theorem}{Theorem}[section]
\newtheorem{proposition}[theorem]{Proposition}
\newtheorem{lemma}[theorem]{Lemma}
\newtheorem{corollary}[theorem]{Corollary}
\theoremstyle{definition}
\newtheorem{definition}[theorem]{Definition}
\newtheorem{assumption}[theorem]{Assumption}
\theoremstyle{remark}
\newtheorem{remark}[theorem]{Remark}

\usepackage[textsize=tiny]{todonotes}

\icmltitlerunning{Guarantees of Overparameterized Preconditioned Subgradient Algorithm}

\usepackage{pifont}
\usepackage{wasysym}

\usepackage{amsmath,amsfonts,bm}

















\def\1{\bm{1}}









\def\mA{{\bm{A}}}
\def\mB{{\bm{B}}}

\def\mF{{\bm{F}}}

\def\mI{{\bm{I}}}

\def\mL{{\bm{L}}}

\def\mQ{{\bm{Q}}}
\def\mR{{\bm{R}}}
\def\mS{{\bm{S}}}

\def\mU{{\bm{U}}}
\def\mV{{\bm{V}}}

\def\mX{{\bm{X}}}
\def\mY{{\bm{Y}}}

\DeclareMathAlphabet{\mathsfit}{\encodingdefault}{\sfdefault}{m}{sl}
\SetMathAlphabet{\mathsfit}{bold}{\encodingdefault}{\sfdefault}{bx}{n}











\newcommand{\R}{\mathbb{R}}



\DeclareMathOperator*{\argmin}{arg\,min}

\usepackage{url}

\usepackage{colortbl}
\usepackage{xcolor}

\begin{document}

\twocolumn[
\icmltitle{Guarantees of a Preconditioned Subgradient Algorithm for \\Overparameterized Asymmetric Low-rank Matrix Recovery}



\icmlsetsymbol{equal}{*}

\begin{icmlauthorlist}
\icmlauthor{Paris Giampouras}{warwick}
\icmlauthor{HanQin Cai}{ucf}
\icmlauthor{Ren\'e Vidal}{upenn}
\end{icmlauthorlist}

\icmlaffiliation{warwick}{Department of Computer Science, University of Warwick, Coventry, England, UK}
\icmlaffiliation{ucf}{Department of Statistics and Data Science and Department of Computer Science, University of Central Florida, Orlando, FL, USA}
\icmlaffiliation{upenn}{Center for Innovation in Data Engineering and Science (IDEAS), Departments of ESE and Radiology, University of Pennsylvania, Philadelphia, PA, USA}

\icmlcorrespondingauthor{Ren\'e Vidal}{vidalr@seas.upenn.edu}

\icmlkeywords{Matrix recovery, overparameterization, subgradient,  precondition}

\vskip 0.3in
]



\printAffiliationsAndNotice{} 

\begin{abstract}
 In this paper, we focus on a matrix factorization-based approach to recover low-rank {\it asymmetric} matrices from corrupted measurements. We propose an {\it Overparameterized Preconditioned Subgradient Algorithm (OPSA)} and provide, for the first time in the literature, linear convergence rates independent of the rank of the sought asymmetric matrix in the presence of gross corruptions. Our work goes beyond existing results in preconditioned-type approaches addressing their current limitation, i.e., the lack of convergence guarantees in the case of {\it asymmetric matrices of unknown rank}. 
By applying our approach to (robust) matrix sensing, we highlight its merits when the measurement operator satisfies a mixed-norm restricted isometry property. Lastly, we present extensive numerical experiments that validate our theoretical results and demonstrate the effectiveness of our approach for different levels of overparameterization and outlier corruptions.
\end{abstract}

\addtocontents{toc}{\protect\setcounter{tocdepth}{-1}}
\section{Introduction}
Low-rank matrix recovery has been a ubiquitous problem showing up in numerous applications in the fields of signal/image processing, machine learning, and data science \cite{recht2010guaranteed,chen2013low,davenport2016overview,cai2021asap,smith2024riemannian,wang2024s3attention}. For instance, problems such as matrix sensing \citep{jain2013low,wei2016guarantees,park2017non,li2020nonconvex}, matrix completion \cite{candes2008exact,nie2012low,kummerle2021scalable,cai2023ccs}, and robust principal component analysis \cite{candes2011robust,netrapalli2014non,giampRPCA,cai2019accaltproj,cai2021ircur}, can all be cast as low-rank matrix recovery problems and then solved using minimization algorithms that seek a matrix $\mX \in \mathbb{R}^{m\times n}$ that is as close as possible to the unknown low-rank matrix $\mX_\star \in \mathbb{R}^{m\times n}$.

A major challenge in low-rank matrix recovery concerns the computational complexity and memory requirements of the proposed algorithms when the size of the unknown matrix significantly increases. To address this, matrix factorization-based approaches have been proposed, \cite{Chi_TSP_2019,park2016provable}, which, given $r\geq \mathrm{rank}(\mX_\star)$, use matrices $\mL\in \mathbb{R}^{m\times r}$ and $\mR\in \mathbb{R}^{n\times r}$ such that $\mX = \mL\mR^\top$. These approaches enable the development of iterative algorithms with significantly reduced computational complexity and memory storage requirements. However, they lead to nonconvex formulations of the optimization problems, which pose their own challenges in the derivation of theoretical guarantees and fast rates of convergence. 

Alternating gradient-based algorithms have been a standard approach for solving matrix factorization-based problems. A well-known issue with these approaches lies in the dependence of their rate of convergence on the condition number of the unknown matrix $\mX_\star$,\citep{tong2021accelerating}. This makes convergence too slow in problems where the sought matrix is ill-conditioned. Several works, \citep{park2016provable,zhang2023preconditioned},  address this issue by moving beyond vanilla gradient updates and resorting to {\it preconditioned} approaches. In \citep{tong2021accelerating}, preconditioned gradient algorithms have been proposed that consist of updates in the following form: 
\begin{equation}
\begin{split}
&\mL_{t+1} = \mL_t - \eta \nabla_\mL \mathcal{L}(\mL_t \mR^\top_t) \underbrace{(\mR_t^\top \mR_t)^{-1}}_{\textrm{preconditioner}},\\
&\mR_{t+1}  =  \mR_t - \eta \nabla_\mR\mathcal{L}(\mL_{t}\mR^{\top}_t)\underbrace{\left(\mL_{t}^{\top}\mL_{t} \right)^{-1}}_{\textrm{preconditioner}}.
\end{split}
\label{eq:updates_scaled_gd}
\end{equation}
Preconditioned-based approaches, e.g. \citep{Chi_TSP_2019,cai2021lrpca,zhang2023preconditioned,zhang2024fast,cai2024accelerating}, lead to linear rates of convergence that do not depend on the condition number of $\mX_\star$. Recently, these approaches have been extended to the {\it robust low-rank matrix recovery} framework, aiming to address more challenging scenarios that account for the presence of grossly corrupted data. In these settings, \citep{tong2021low} reported similar rates of convergence, which are however negatively influenced as the level of corruption increases. 

A fundamental assumption made in the above-mentioned approaches is that the rank $r$ of the sought matrix $\mX_\star$ is known. Clearly, in the case that $r$ is underestimated, the algorithm will only be able to find a low-rank approximation of the ground truth matrix. To address this shortcoming, in \citep{ma2021implicit}, the authors overparameterized the rank as $d\geq r=\mathrm{rank}(\mX_\star)$, and established a linear rate of convergence for robust low-rank matrix recovery for {\it symmetric} matrices using a vanilla subgradient algorithm. Further works, build on this observation aiming to establish convergence to the ground truth matrix $\mX_{\star}$ under weaker conditions, \citep{ding2021rank,xiong2023over}, or to explore the intriguing implicit regularization phenomena in this setting, \citep{soltanolkotabi2023implicit}. 

Note that due to non-invertibility issues as the iterates $\mL_t,\mR_t$ converge to $\mL_\star,\mR_\star$, the overparameterized scenario cannot be directly adopted as such in the previously proposed preconditioned-based methods that rely on updates given in \eqref{eq:updates_scaled_gd}. To address this issue, recent works, \citep{zhang2023preconditioned,xu2023power}, proposed an overparameterized preconditioned algorithm focusing on symmetric matrices and assuming smooth loss functions, with updates in the form:
\begin{equation}
\mL_{t+1} = \mL_t - \eta \nabla_\mL \mathcal{L}(\mL_t\mL^\top_t)(\mL^\top_t\mL_t + \lambda\mI)^{-1},
\label{eq:sym_prec}
\end{equation}
and showed linear convergence at a rate independent of the condition number of $\mX_\star$ and the overparameterization of the true rank. Focusing again on {\it smooth} problems, in \citep{cheng2024accelerating}, the authors proposed an extension of these works to the case of asymmetric matrices coming up with an alternating algorithm with regularized preconditioners leading to updates:
\begin{equation}
\label{eq:upd_asym}
\begin{split}
\mL_{t+1} = \mL_t - \eta \nabla_\mL\mathcal{L}(\mL_t\mR^\top_t)(\mR^\top_t\mR_t + \lambda_t\mI)^{-1}, \\
\mR_{t+1} = \mR_t - \eta \nabla_\mR\mathcal{L}(\mL_{t}\mR^\top_t)(\mL^\top_{t}\mL_{t} + \lambda_t\mI)^{-1}.
\end{split}
\end{equation}
In this work, we depart from previous works by focusing on {\it robust} low-rank matrix recovery in the presence of outliers and propose an overparameterized preconditioned-based algorithm in the unknown rank regime. Our work aims to address the following question (Q): \\
\\
\noindent
\fbox{%
  \parbox{0.96\linewidth}{%
    {\it Q: Can we establish linear rates of convergence to the ground truth $\mX_\star$, 
    in the case of \textbf{non-smooth} minimization problems in the overparameterized 
    regime with \textbf{unknown rank}, and for \textbf{asymmetric} matrices $\mX_\star$?}
  }
}

\begin{table*}
\caption{Comparison of theoretical convergence properties of SOTA algorithms for low-rank matrix estimation. Columns indicate if algorithms handle asymmetric matrices, outliers, unknown rank, and if convergence is independent of $\kappa(\mX_\star)$, the condition number of $\mX_\star$. The proposed OPSA method addresses all these challenges.}
\centering
\begin{tabular}{l | c  c  c  c  c}
\toprule 
Algorithm & Asymmetric & Outliers &  Unknown Rank & No dependency on $\kappa(\mX_\star)$ \\ 
\midrule
\textsc{Vanilla GD} {\footnotesize \cite{stoger2021small}} & \ding{51}  & \ding{51}   & \ding{51}    &  \ding{55} \\
\textsc{ScaledGD} {\footnotesize \cite{tong2021accelerating}}  & \ding{51} & \ding{55}   & \ding{55} & \ding{51} \\  
 \textsc{PrecGD} {\footnotesize \cite{zhang2023preconditioned}}  & \ding{55}  & \ding{55}     & \ding{51} & \ding{51} \\
 \textsc{ScaledGD($\lambda$)} {\footnotesize \cite{xiong2023over}} & \ding{55}  & \ding{55}     & \ding{51} & \ding{51} \\  
 \textsc{ScaledSM} {\footnotesize \cite{tong2021low}} & \ding{51}  & \ding{51}    &   \ding{55} &  \ding{51}\\
\rowcolor{lightgray} \textsc{OPSA} (Proposed) & \ding{51}  & \ding{51}   & \ding{51} & \ding{51} \\   
\bottomrule
\end{tabular}
\label{table:comp}
\end{table*}

\subsection{Main Contributions}
In this work, we advance beyond previous work by focusing on robust low-rank matrix recovery with a non-smooth objective function, addressing the unique challenges of recovering {\it asymmetric matrices with unknown rank} (see Table \ref{table:comp} and comparison with SOTA). Our preconditioners naturally arise by adopting quasi-Newton-type updates in an implicitly regularized objective function. Our main contributions are summarized as follows:
\begin{itemize}
\item We propose a novel algorithm, coined {\it Overparameterized Preconditioned Subgradient Algorithm (OPSA)}, that minimizes a robust $\ell_1$ loss function. To account for overparameterization caused by rank overestimation, we propose a novel distance metric and assume that the matrix factors are initialized sufficiently close to the ground truth (which can be easily attained by spectral initialization). In \Cref{theo:main_m}, we show that {\it OPSA converges linearly} to the low-rank ground truth matrix $\mX_\star$ using an adaptive Polyak's step size. Note that our main result holds for general non-smooth loss functions under certain conditions such as the restricted rank-$d$ sharpness condition and restricted Lipschitz continuity. Moreover, our results extend the preconditioned subgradient method \citep{tong2021low} from the exact known rank setting to the overparameterized regime.
\item For theoretical results, we focus on robust matrix sensing and show that linear convergence holds for OPSA both in the noiseless case and in the presence of gross corruptions/outliers when the measurement matrices satisfy a mixed-norm restricted isometry property (RIP). In this setting, we unveil how the iteration complexity is affected by overparameterization. Moreover, our results showcase that the tolerance of measurement matrices in outliers is another important factor for sharpness around $\mX_\star$, which is a necessary condition for exact convergence.
\item In the experimental section, we empirically showcase the favorable performance of the proposed OPSA against the state-of-the-art under different levels of overparameterization $d$, for the problem of robust matrix sensing with Gaussian measurements. We also demonstrate that OPSA constantly enjoys linear convergence with varying condition numbers $\kappa$, parameters $\lambda$, and outlier densities even when the rank is heavily overestimated. The experiments provided further confirm our theoretical findings.
\end{itemize}
\subsection{Notation}
  The transpose of a vector or matrix is denoted as $(\cdot)^\top$. The Euclidean vector norm is denoted as $\|\cdot\|_{2}$. The Frobenius and operator matrix norms are denoted as $\|\cdot\|_{F}$ and $\|\cdot\|_{op}$, respectively. We also denote as $\sigma_i(\mX)$ the singular values of matrix $\mX$ -assuming a decreasing order as $i$ increases- and as $\kappa(\mX)$ its condition number. We denote the trace of a matrix as $\mathrm{tr}(\cdot)$, the trace of the inner product between two matrices $\mA,\mB$ as $\langle \mA,\mB\rangle = \mathrm{tr}(\mA^\top\mB)$, and the $m\times n$-dimensional Euclidean space as $\mathbb{R}^{m\times n}$. $G(d)$ denotes the set of invertible matrices in $\mathbb{R}^{d\times d}$.
\section{Related Work}
In this section, we provide some interesting insights into the connection of our approach with prior research works.


\textbf{Preconditioned gradient and subgradient methods for low-rank matrix recovery.} Preconditioned-based methods have attracted significant interest over the last few years since they allow for establishing rates of convergence that do not depend on the condition number of $\mX_\star$, \citep{tong2021accelerating,zhang2021sharp, zhang2023preconditioned}. For a thorough review of these methods, we refer the reader to \citep{Chi_TSP_2019}. Preconditioned methods have been extended to non-smooth problems such as robust low-rank matrix recovery with $\ell_1$ loss. However, they focus on either the known-rank asymmetric regime or assume an unknown-rank with sought symmetric matrix $\mX$. Relaxing the symmetric assumption on $\mX$ to the more challenging asymmetric one in unknown-rank regimes is the main contribution of our work. Recently, these approaches have been shown to offer significant improvements in the low-rank adaptation (LoRA) for parameter-efficient fine-tuning foundation models, \cite{zhang2024riemannian}. Even though this problem is out of the scope of the current paper, extending current approaches, which rely on a fixed rank, to the overparameterized preconditioning framework is a promising future research direction.

\textbf{Overparameterized (robust) low-rank matrix recovery.} Recently, several works have focused on robust low-rank matrix recovery in the unknown rank regime. In \citep{ma2021implicit}, the authors focus on robust matrix sensing and report the convergence of a vanilla subgradient algorithm in the overparameterized setting for symmetric matrices, which suggests an implicit regularization behavior. In \citep{ding2021rank}, improved results are obtained, again for the symmetric case, by relaxing the conditions imposed on measurement matrices. In \citep{zhang2023preconditioned}, with the aim to reduce the negative effect of overparameterization and ill-conditioning, the authors focused on symmetric matrices and generalized the preconditioned-based approach in the overparameterized setting using updates in the form of \eqref{eq:sym_prec}. Similar to our work, the authors in \citep{cheng2024accelerating} recently, proposed an overparameterized preconditioned approach for asymmetric matrix factorization establishing linear convergence with update in the form of \eqref{eq:upd_asym}. However, unlike our work, they focused on smooth losses, which pose less challenges, and enabled them to use a Polyak-Lojasiewicz (PL)-type condition for deriving the convergence rate. 
\section{Problem Formulation}
\label{gen_inst}
We focus on the low-rank matrix estimation problem, assuming that the true rank $r$ is unknown. We denote the ground truth matrix as $\mX_\star$, and assume a  singular value decomposition 
\begin{equation}
\mX_{\star} = \mU_\star \bm{\Sigma}_\star \mV^\top_\star,
\end{equation}
where $\mU_\star\in\mathbb{R}^{m\times d}$ contains $d\geq r$ left singular vectors, $\bm{\Sigma}_\star\in\mathbb{R}^{d\times d}$ is a diagonal matrix consisting of $d$ singular values of $\mX_\star$ presented in an non-ascending order. Since $\mathrm{rank}(\mX_\star) = r$ and $d\geq r$ we have $\sigma_i(\mX_\star) = 0$ for $i=r+1,\dots,d$.

The low-rank matrix estimation problem w.r.t.~the space of $\mX\in \mathbb{R}^{m\times n}$ is defined as
\begin{equation}
\min_{\substack{\mathbf{X} \in \mathbb{R}^{m \times n}, \\ \operatorname{rank}(\mathbf{X}) \leq d}}~ \mathcal{L}(\mX),
\label{eq:optim_prob_X}
\end{equation}
where $\mathcal{L}(\mX)$ is a  general loss function that is convex w.r.t.~$\mX$ and possibly non-smooth in order to allow the use of robust loss functions such as the $\ell_1$ norm.

%


%
%
Here, we solve a problem equivalent to problem \eqref{eq:optim_prob_X}, defined over matrix factors $\mL\in \mathbb{R}^{m\times d}$
and $\mR\in\mathbb{R}^{n\times d}$,
\begin{align}
\label{eq:obj_gen_loss}
\min_{\mL\in\mathbb{R}^{m\times d},\mR\in\mathbb{R}^{n\times d}}\;\; \mathcal{L}(\mL\mR^\top). 
\end{align}

\subsection{Matrix Sensing}
Next, we focus on the {\it matrix sensing} problem i.e., we assume that we have access to observations $\mathbf{y} = \{y_i\}^p_{i=1}$ of a low-rank matrix $\mX_\star \in \mathbb{R}^{m\times n}$, given as
\begin{equation}
y_i = \mathcal{\boldsymbol{A}}_i\left(\mX_\star\right) + s_i,\;\;\;\; 1\leq i \leq p,
\end{equation}
where $\mathcal{\boldsymbol{A}}_i$ is the measurement operator, 
\[\mathcal{\boldsymbol{A}}_i\left(\mX_\star\right) = \frac{1}{p}\langle \mA_i\mX_\star \rangle,\]
$\mA_i \in \mathbb{R}^{m\times n}$ is the $i$-th measurement matrix, and $s_i$'s correspond to arbitrary and sparse corruptions.

The observation model above can be written in a more compact form as
\begin{equation} \label{eq:observation model}
\mathbf{y} = \mathcal{A}\left(\mX_\star\right) + \mathbf{s}.
\end{equation}

Our goal is to find $\mX_\star$ given $\mathbf{y}$ and the measurement ensemble $\mathcal{A}\left(\cdot\right) = \{\mathcal{A}_i\left(\cdot\right)\}^p_{i=1}$. We formulate the problem as
\begin{equation}
\min_{\mX\in\mathbb{R}^{m\times n}}\|\mathbf{y} - \mathcal{A}\left(\mX\right)\|_1,
\end{equation}
where we have used the $\ell_1$ norm as the loss term $l(\cdot)$, also known as the residual sum of absolute errors, which is known to be robust to the presence of arbitrary sparse corruptions \citep{candes2011robust}.

Recall that we assume we do not know the true rank $r$ of the unknown matrix $\mX_\star$, and we also solve the problem in the space of matrix factors  $\mL\in\mathbb{R}^{m\times d}$ and $\mR\in \mathbb{R}^{n\times d}$ whose product equals $\mX$ i.e., $\mX = \mL\mR^\top$ with $d$ being an overestimate of $r$. We thus formulate matrix sensing as
\begin{equation}
\label{eq:obj_matix_sens}
\min_{\mL\in\mathbb{R}^{m\times d},\mR\in\mathbb{R}^{n\times d}} \|\mathbf{y} - \mathcal{A}\left(\mL\mR^\top\right)\|_1. 
\end{equation}
\section{Overparameterized Preconditioned Subgradient Algorithm}
\label{headings}
To minimize the objective function given in \eqref{eq:obj_gen_loss}, we use quasi-Newton type updates. Hence, we use local upper-bounds  of the objective function, which lead to preconditioned-type updates for the matrix factors $\mL$ and $\mR$, i.e.,
\begin{equation*}
\small
\begin{split}
 ({\mL}_{t+1},{\mR}_{t+1}) 
\equiv \argmin_{\mL,\mR} &~ \mathcal{L}(\mL_{t}\mR_t^{\top}) +\langle \partial_{\mL}\mathcal{L}(\mL_{t}\mR_t^{\top}),\mL - \mL_{t}\rangle \\
&+  
 \langle \partial_{\mR}\mathcal{L}(\mL_{t}\mR_t^{\top}),(\mR - \mR_{t})\rangle \\
 &+ \frac{1}{2\eta_t}\Big(\|\left(\mL -\mL_t\right)(\mR_t^\top\mR_t 
 + \lambda \mI )\|^2_F  \\
& + \|\left(\mR -\mR_t\right)(\mL_t^\top\mL_t + \lambda \mI )\|^2_F \Big),
\end{split}
\end{equation*}
where $\eta_t$ is the step size and $\partial_\mL \mathcal{L}(\mL_t\mR_t^\top)$ and $\partial_\mR \mathcal{L}(\mL_t\mR_t^\top)$ denote subgradients of the objective function $\mathcal{L}(\mL_t\mR^\top_{t})$ w.r.t. $\mL$ and $\mR$, respectively.
%
%

Note that the \textsc{RHS} of the above optimization problem corresponds to upper bounds of the original objective and leads to quasi-Newton-type updates, \citep{giampouras2020novel}.
We use a Polyak's type step size, and get a similar form to the one in \citep{tong2021low}, i.e.,
\begin{align}
\label{eq:polyak_main}
    \eta_t = \frac{\mathcal{L}(\mL_t\mR^\top_t) - \mathcal{L}(\mL_\star\mR^\top_\star)}{\gamma_t},
\end{align}
where
\begin{equation}
\begin{split}
\gamma_t & = \|\mS_t\mR_t(\mR^\top_t\mR_t + \lambda\mI)^{-\frac{1}{2}}\|^2_F  \\ \nonumber 
&\quad~+\|\mS_t^\top\mL_t(\mL_t^\top\mL_t + \lambda\mI)^{-\frac{1}{2}}\|^2_F
\end{split}
\end{equation}
%
with $\mS_t$ denoting a subgradient of the objective $\mathcal{L}(\mX_t)$, i.e., $\mS_t\in \partial_\mX \mathcal{L}(\mX_t)$.

The proposed algorithm is given in Algorithm \ref{alg:main_alg}.

\begin{algorithm}
\caption{Overparameterized Preconditioned Subgradient Algorithm (OPSA)}
\label{alg:main_alg}
\begin{algorithmic}[1]
\STATE \textbf{Input:} \text{Data} ($\mathbf{y}\in\R^{p}$ \text{in the matrix sensing case)}, $d$: overestimated rank, $\eta$: stepsize. 
\STATE \textbf{Initialize} $\mL_0$ and $\mR_0$, set $t=0$. 
\WHILE{!\,Stop Condition} 
 \STATE${\mL}_{t+1} = \mL_t - \eta_t \mS_t \mR_t \left(\mR_t^\top\mR_t +\lambda \mI\right)^{-1} $   
 \STATE
   ${\mR}_{t+1} = \mR_t - \eta_t \mS_t^\top \mL_t \left(\mL_t^\top\mL_t +\lambda \mI\right)^{-1} $ 
\STATE $t=t+1$
\ENDWHILE
\STATE \textbf{Output:} $\hat{\mX}=\mL_t\mR_t^\top$.
\end{algorithmic}
\end{algorithm}


\begin{remark}
    In practice, if $\mathcal{L}(\mL_\star\mR_\star^\top)$ is unknown, e.g., due to noise or additional regularization terms, Polyak’s type step size may be hard to apply. To address that, one can use a geometrically decaying step size schedule to match the expected linear convergence. Such a step size schedule was introduced in \cite{goffin1977convergence} and has been widely used in the literature on subgradient methods.
\end{remark}

\section{Convergence Analysis}
%
In this section, we present the convergence analysis of the proposed Overaparametrized Preconditioned Subgradient Algorithm (OPSA).
\subsection{Landscape Assumptions}
In the following, we provide the assumptions used in our theoretical results.

\begin{assumption}[Restricted Lipschitz Continuity]
A function $\mathcal{L}(\cdot):\mathbb{R}^{m\times n}\mapsto \mathbb{R}$ is rank-$d$ restricted $L$-Lipschitz continuous for some quantity $L>0$ if
\begin{equation}
|\mathcal{L}(\mX_1) - \mathcal{L}(\mX_2)| \leq L \|\mX_1 - \mX_2\|_F
\end{equation}
holds for any $\mX_1,\mX_2 \in \mathbb{R}^{m\times n}$ such that $\mX_1-\mX_2$ has rank at most $2d$.
\end{assumption}

\begin{assumption}[Restricted Sharpness]
A function $\mathcal{L}(\cdot):\mathbb{R}^{m\times n}\mapsto \mathbb{R}$ is rank-$d$ restricted $\mu$-sharp w.r.t.~$\mX_\star$ for some $\mu>0$ if 
\begin{equation}
\mathcal{L}(\mX) - \mathcal{L}(\mX_\star) \geq \mu \|\mX - \mX_\star\|_F
\end{equation}
holds for any $\mX \in \mathbb{R}^{m\times n}$ with rank at most $d$.
\end{assumption}
%
%
\subsection{Main Results}
Let $\mF_{\star} = \begin{bmatrix}\mL_\star \\ \mR_\star \end{bmatrix} \in \mathbb{R}^{(m+n)\times d}$, where $\mL_\star = \mU_\star \bm{\Sigma}_\star^{\frac{1}{2}}$, $\mR_\star = \mV_\star \bm{\Sigma}_\star^{\frac{1}{2}}$, and  $\mF = \begin{bmatrix}\mL \\ \mR \end{bmatrix} \in \mathbb{R}^{(m+n)\times d}$.

In our results, we propose a modified version of the distance metric introduced in \citep{tong2021low}, defined as
%
\begin{equation*}
\begin{split}
\mathrm{dist}^2(\mF, \mF_\star) = 
 \inf_{\mQ \in G(d)}& \|(\mL \mQ - \mL_\star)(\bm{\Sigma}_\star + \lambda \mI)^{\frac{1}{2}}\|_F^2 \\
+&~\|(\mR \mQ^{-\top} - \mR_\star)(\bm{\Sigma}_{\star} + \lambda\mI)^{\frac{1}{2}}\|_F^2 .
\end{split}
\end{equation*}
Matrix $\mQ$ is known as the alignment matrix and has long been used in similar distance metrics,  \citep{tong2021accelerating}. 

\begin{lemma}
The proposed distance metric is an upper bound of the distance $\|\mL\mR^\top - \mX_\star\|_F$. Namely, assuming that $\mathrm{dist}(\mF,\mF_\star)\leq \lambda \epsilon$, it holds 
\begin{equation}
\|\mL\mR^\top - \mX_\star\|_F \leq \left(1+\frac{\epsilon}{2}\right)\sqrt{2}\mathrm{dist}(\mF,\mF_\star).
\end{equation}
\end{lemma}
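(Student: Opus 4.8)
The plan is to relate $\|\mL\mR^\top - \mX_\star\|_F$ to the distance metric by inserting the optimal alignment matrix $\mQ$ and using the triangle inequality. Let $\mQ \in G(d)$ be (close to) the minimizer in the definition of $\mathrm{dist}(\mF,\mF_\star)$, and write $\widetilde{\mL} = \mL\mQ$, $\widetilde{\mR} = \mR\mQ^{-\top}$, so that $\widetilde{\mL}\widetilde{\mR}^\top = \mL\mQ\mQ^{-1}\mR^\top = \mL\mR^\top$. Hence the left-hand side is unchanged: $\|\mL\mR^\top - \mX_\star\|_F = \|\widetilde{\mL}\widetilde{\mR}^\top - \mL_\star\mR_\star^\top\|_F$. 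First I would perform the standard product-difference splitting
\begin{equation*}
\widetilde{\mL}\widetilde{\mR}^\top - \mL_\star\mR_\star^\top = (\widetilde{\mL} - \mL_\star)\mR_\star^\top + \widetilde{\mL}(\widetilde{\mR} - \mR_\star)^\top,
\end{equation*}
or more symmetrically split the cross term so each factor difference is paired with a $\mL_\star$ or $\mR_\star$ plus a residual. Then apply the triangle inequality and submultiplicativity of $\|\cdot\|_F$ versus $\|\cdot\|_{op}$.

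The key device is that the weighting matrix $(\bm{\Sigma}_\star + \lambda\mI)^{1/2}$ appearing in the distance can be used to "pay for" the factors $\mR_\star$ and $\mL_\star$. Since $\mR_\star = \mV_\star\bm{\Sigma}_\star^{1/2}$, we have $\|\mR_\star (\bm{\Sigma}_\star+\lambda\mI)^{-1/2}\|_{op} = \|\bm{\Sigma}_\star^{1/2}(\bm{\Sigma}_\star+\lambda\mI)^{-1/2}\|_{op} \le 1$, and likewise for $\mL_\star$. So I would write $(\widetilde{\mL}-\mL_\star)\mR_\star^\top = \big[(\widetilde{\mL}-\mL_\star)(\bm{\Sigma}_\star+\lambda\mI)^{1/2}\big]\big[(\bm{\Sigma}_\star+\lambda\mI)^{-1/2}\mR_\star^\top\big]$, bound the second bracket in operator norm by $1$, and the first bracket in Frobenius norm by $\mathrm{dist}(\mF,\mF_\star)$. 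The term involving $\widetilde{\mL}(\widetilde{\mR}-\mR_\star)^\top$ is the only place the $\epsilon$ enters: here $\widetilde{\mL}$ is not $\mL_\star$ but is within distance $\mathrm{dist}(\mF,\mF_\star) \le \lambda\epsilon$ of it in the weighted norm, so $\|\widetilde{\mL}(\bm{\Sigma}_\star+\lambda\mI)^{-1/2}\|_{op} \le \|\mL_\star(\bm{\Sigma}_\star+\lambda\mI)^{-1/2}\|_{op} + \|(\widetilde{\mL}-\mL_\star)(\bm{\Sigma}_\star+\lambda\mI)^{1/2}(\bm{\Sigma}_\star+\lambda\mI)^{-1}\|_{op} \le 1 + \lambda\epsilon \cdot \lambda^{-1} = 1+\epsilon$ using $\|(\bm{\Sigma}_\star+\lambda\mI)^{-1}\|_{op}\le \lambda^{-1}$ (and $\|\cdot\|_{op}\le\|\cdot\|_F$). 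Combining, each of the two product-difference terms contributes at most $(1+\tfrac{\epsilon}{2})\,\mathrm{dist}$-type quantities; collecting the two weighted norms into $\mathrm{dist}^2$ via the inequality $a+b \le \sqrt{2}\sqrt{a^2+b^2}$ yields the stated factor $(1+\tfrac{\epsilon}{2})\sqrt{2}$.

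The main obstacle I anticipate is bookkeeping the symmetric split so that the $\epsilon$-dependent slack is $\tfrac{\epsilon}{2}$ in each term rather than $\epsilon$, and correctly handling the infimum over $\mQ$ (one should argue the bound holds for the minimizing $\mQ$, or pass to a limit / near-minimizer since $G(d)$ is open and the infimum may not be attained). A secondary point is justifying the operator-norm bounds $\|\bm{\Sigma}_\star^{1/2}(\bm{\Sigma}_\star+\lambda\mI)^{-1/2}\|_{op}\le 1$ and $\|(\bm{\Sigma}_\star+\lambda\mI)^{-1/2}\|_{op}\le\lambda^{-1/2}$, which are immediate since these matrices are diagonal with entries $\sigma_i/(\sigma_i+\lambda) \le 1$ and $(\sigma_i+\lambda)^{-1}\le\lambda^{-1}$ respectively; everything else is triangle inequality and submultiplicativity, so the estimate is routine once the splitting is fixed.
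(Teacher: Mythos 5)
Your proposal follows essentially the same route as the paper: align with the (near-)optimal $\mQ$, expand $\mL\mR^\top - \mX_\star$ in terms of $\Delta_\mL,\Delta_\mR$, weight each factor difference by $(\bm{\Sigma}_\star+\lambda\mI)^{1/2}$ so its Frobenius norm is controlled by $\mathrm{dist}(\mF,\mF_\star)$, pay for the remaining factor in operator norm (bounded by $1$ or by $\epsilon$), and finish with $a+b\le\sqrt{2}\sqrt{a^2+b^2}$. The one point to be careful about, which you yourself flag, is that the asymmetric two-term split $\Delta_\mL\mR_\star^\top + \widetilde\mL\Delta_\mR^\top$ only yields $(1+\epsilon)\sqrt{2}\,\mathrm{dist}(\mF,\mF_\star)$; the paper instead uses the symmetric three-term decomposition $\Delta_\mL\mR_\star^\top + \mL_\star\Delta_\mR^\top + \Delta_\mL\Delta_\mR^\top$ and halves the quadratic cross term with opposite weightings $(\bm{\Sigma}_\star+\lambda\mI)^{\pm 1/2}$, so each of the two resulting groups picks up exactly $\epsilon/2$ instead of $\epsilon$ -- this is precisely the ``symmetric split'' you anticipate needing. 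The key intermediate bound $\max\{\|\Delta_\mL(\bm{\Sigma}_\star+\lambda\mI)^{-1/2}\|_{op},\|\Delta_\mR(\bm{\Sigma}_\star+\lambda\mI)^{-1/2}\|_{op}\}\le\epsilon$ that drives the $\epsilon$-slack is the same in the paper, derived there via $\|\mA\mB\|_F\ge\|\mA\|_{op}\sigma_{\min}(\mB)$ applied directly to the definition of $\mathrm{dist}$; your route through $\|\cdot\|_{op}\le\|\cdot\|_F$ and $\|(\bm{\Sigma}_\star+\lambda\mI)^{-1}\|_{op}\le\lambda^{-1}$ gives the same constant.
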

{\it Proof:} The proof is deferred to supplementary material.

We next provide our main result, which relies on the assumption that the Lipschitz continuity and restricted sharpness condition are satisfied and ensure exact convergence.  

\begin{theorem}[Convergence of \textsc{OPSA}] \label{theo:main_m}
Let $\mathcal{L}(\mX):\mathbb{R}^{m\times n}\mapsto \mathbb{R}$ be convex w.r.t $\mX$, and assume that it satisfies the rank-$d$ restricted $L$-Lipschitz continuity assumption and the rank-$d$ restricted $\mu$-sharpness condition, defined above. Let also $\lambda = \frac{1}{20}$, and without loss of generality $\sigma_r(\mX_\star)=1$. Let $\epsilon =  \frac{10^{-4}}{\chi \|\mX_\star\|^{\frac{1}{2}}_{op}}$, and   
\begin{equation}
\label{eq:initial}
\mathrm{dist}(\mF_0,\mF_{\star}) \leq \lambda \epsilon = \frac{10^{-4}}{\chi} \frac{\|\mX_\star\|^{-\frac{1}{2}}_{op}}{20},
\end{equation}
where $\chi = \frac{L}{\mu}$. Then for the Overparameterized Preconditioned Subgradient Algorithm (OPSA) given in Algorithm 1, with the Polyak's step-size defined in \eqref{eq:polyak_main}, we have 
\begin{equation*}
\begin{split}
\mathrm{dist}(\mF_t,\mF_{\star}) &\leq \Big(1 - \frac{0.12}{\chi^2}\Big)^{\frac{t}{2}}\frac{10^{-4}\|\mX_\star\|^{-\frac{1}{2}}_{op}}{20 \cdot \chi}, \\
\| \mL_t\mR^\top_t - \mX_\star \|_F&\leq \Big(1 - \frac{0.12}{\chi^2}\Big)^{\frac{t}{2}}\frac{1.5\times 10^{-4}\|\mX_\star\|^{-\frac{1}{2}}_{op}}{20 \cdot \chi }.
\end{split}
\end{equation*}
%
%
\end{theorem}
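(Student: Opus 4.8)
The plan is to track the evolution of $\mathrm{dist}^2(\mF_t,\mF_\star)$ along the OPSA iterations and show a one-step contraction of the form $\mathrm{dist}^2(\mF_{t+1},\mF_\star)\le (1-0.12/\chi^2)\,\mathrm{dist}^2(\mF_t,\mF_\star)$, from which the geometric decay and the bound on $\|\mL_t\mR_t^\top-\mX_\star\|_F$ (via the earlier Lemma) follow immediately. To do this, I would first fix an (approximately) optimal alignment matrix $\mQ_t$ attaining the infimum in $\mathrm{dist}(\mF_t,\mF_\star)$, and rewrite the preconditioned updates in the aligned coordinates $\widetilde{\mL}_t=\mL_t\mQ_t$, $\widetilde{\mR}_t=\mR_t\mQ_t^{-\top}$. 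The key algebraic observation is that the preconditioners $(\mR_t^\top\mR_t+\lambda\mI)^{-1}$ and $(\mL_t^\top\mL_t+\lambda\mI)^{-1}$ interact cleanly with the weighting matrix $(\bm{\Sigma}_\star+\lambda\mI)^{1/2}$ in the distance metric — this is exactly what motivated the modified metric. Expanding $\mathrm{dist}^2(\mF_{t+1},\mF_\star)$ (upper bounding the infimum by keeping the alignment $\mQ_t$ fixed) produces three groups of terms: the previous distance, a cross term $-2\eta_t\langle \mS_t,\,\cdot\rangle$ that I want to be strongly negative, and a quadratic term $\eta_t^2\gamma_t$ where $\gamma_t$ is precisely the quantity appearing in the Polyak step size.

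The heart of the argument is lower bounding the cross term. Using convexity of $\mathcal{L}$, the inner product should be controlled from below by $\mathcal{L}(\mX_t)-\mathcal{L}(\mX_\star)$ up to error terms coming from (i) the mismatch between the ``ideal'' descent direction and the preconditioned one, and (ii) the overparameterization, i.e., the fact that $\bm{\Sigma}_\star$ is singular on the last $d-r$ coordinates so the weighting $(\bm{\Sigma}_\star+\lambda\mI)^{1/2}$ is only $\sqrt{\lambda}\mI$ there. Then I would invoke the rank-$d$ restricted $\mu$-sharpness assumption to get $\mathcal{L}(\mX_t)-\mathcal{L}(\mX_\star)\ge \mu\|\mX_t-\mX_\star\|_F$, and the earlier Lemma (relating $\|\mX_t-\mX_\star\|_F$ to $\mathrm{dist}(\mF_t,\mF_\star)$) to turn this into a lower bound proportional to $\mu\cdot\mathrm{dist}(\mF_t,\mF_\star)$; restricted $L$-Lipschitz continuity gives the matching upper bound $\mathcal{L}(\mX_t)-\mathcal{L}(\mX_\star)\le L\|\mX_t-\mX_\star\|_F$, which (together with a bound $\gamma_t\lesssim L^2/\lambda$ on the denominator) controls $\eta_t^2\gamma_t=\eta_t\,(\mathcal{L}(\mX_t)-\mathcal{L}(\mX_\star))$. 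Combining, the net decrease per step is of order $(\mathcal{L}(\mX_t)-\mathcal{L}(\mX_\star))^2/\gamma_t \gtrsim (\mu^2/L^2)\,\mathrm{dist}^2(\mF_t,\mF_\star)$, yielding the $1-\Theta(1/\chi^2)$ contraction factor; chasing the explicit constants (with $\lambda=1/20$, $\sigma_r(\mX_\star)=1$) should give the stated $0.12$.

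Two auxiliary facts need to be maintained inductively alongside the contraction. First, the iterates stay in the basin: since $\mathrm{dist}(\mF_t,\mF_\star)$ is (weakly) decreasing, the initialization bound $\mathrm{dist}(\mF_0,\mF_\star)\le\lambda\epsilon$ is preserved, so all the ``local'' estimates used above remain valid at every $t$. Second, I need quantitative control on the spectrum of the factors — lower bounds on $\sigma_{\min}$ of the relevant blocks and upper bounds on $\|\mL_t\|_{op},\|\mR_t\|_{op}$ — to ensure the preconditioners are well-behaved (neither blow up nor vanish); these follow from the closeness to $\mF_\star$ via Weyl-type perturbation bounds, again using that $\mathrm{dist}(\mF_t,\mF_\star)$ is small relative to $\sqrt{\lambda}=1/\sqrt{20}$ and $\sigma_r(\mX_\star)=1$. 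I expect the main obstacle to be item (i)/(ii) above: carefully bounding the error between the preconditioned subgradient step and the ideal step in the overparameterized regime — in particular showing that the contribution of the ``null'' directions (the $d-r$ extra columns, where the metric degenerates to $\sqrt{\lambda}\mI$) does not destroy the contraction — which is precisely the technical novelty needed to go beyond the known-rank analysis of \citep{tong2021low} and the smooth analysis of \citep{cheng2024accelerating}. The non-smoothness enters here too, since $\mS_t$ is only a subgradient, so all ``gradient'' manipulations must be done with the subgradient inequality rather than Taylor expansion.
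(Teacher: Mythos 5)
Your proposal follows the paper's own proof essentially step for step: pass to aligned coordinates with a fixed alignment to upper-bound the infimum, decompose the one-step change in $\mathrm{dist}^2(\mF_{t+1},\mF_\star)$ into the ideal subgradient cross term $\langle\mS_t,\mX_t-\mX_\star\rangle$ plus preconditioner/over\-para\-meter\-ization mismatch errors controlled by Weyl-type perturbation bounds in the $(\bm{\Sigma}_\star+\lambda\mI)$-weighted geometry, lower-bound the ideal term via convexity and restricted sharpness, bound the Polyak denominator $\gamma_t$ via restricted Lipschitz continuity, and induct on the basin — which is exactly the route the paper takes in its Theorem~A.3, followed by constant-chasing for the stated $\lambda=1/20$, $0.12$ rate. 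One small detail to keep straight: the comparison you need inside the contraction is the \emph{lower} bound $\|\mX_t-\mX_\star\|_F \gtrsim \mathrm{dist}(\mF_t,\mF_\star)$ (the paper proves this as a separate auxiliary lemma in the supplement), whereas the main-text Lemma~5.2 you cite gives the \emph{reverse} inequality, which is used only at the very end to turn the decay of $\mathrm{dist}$ into the stated bound on $\|\mL_t\mR_t^\top-\mX_\star\|_F$.
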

\begin{proof}
    The rate can be derived from its general form (see \eqref{eq:general_rate}) by setting $\lambda = \frac{\|\mX_\star\|_{op}}{\bar{\lambda}}$, $\bar{\lambda} = c \|\mX_\star\|_{op}$, $c=20$. The detailed proof is deferred to supplementary material.
\end{proof}

\begin{remark}
Our derived rate of convergence requires a good initialization that will satisfy condition \eqref{eq:initial}. In practice, this condition can be satisfied by using a truncated spectral method as in \citep{zhang2016provable}. It should be also noted that, even though the rate is independent of the condition number of $\mX_\star$,  the initialization condition is negatively affected as this condition number increases, requiring initializations closer to $\mX_\star$.
\end{remark}

{\bf Technical innovation.}
It should be noted that the derivation of the convergence rate in the {\it overpararameterized and asymmetric} case that we focus on, requires a novel definition of the distance metric given in \eqref{eq:conv_metric}. Namely, previous convergence metrics dealing with the imbalance issue that shows up in the asymmetric low-rank matrix estimation problems, cannot be applied in the overparameterized setting due to overparameterization and non-invertibility of $\boldsymbol{\Sigma}_\star$. In order to prove the contraction of this convergence metric we derive novel and non-trivial perturbation-type bounds of matrix norms, which pose their own challenges and move beyond existing approaches that address either the known-rank  or  unknown-rank and symmetric matrices regime (see Table \ref{table:comp}).

Note also that the derived rate of convergence is valid in the overparameterized rank setting where $d \geq r$ and the value of $\lambda$ should be positive i.e., $\lambda > 0$. 
The positive value of $\lambda$ is required so that the matrix $\boldsymbol{\Sigma}_\star + \lambda I$ that is used in the convergence metric is always invertible. When $d=r$ the rate of OPSA can take the same form as the one of the scaled subgradient method of \citep{tong2021low}, that is derived in the known rank regime. Namely, when $d=r$ then $\boldsymbol{\Sigma}_\star$ is invertible and $\lambda$ could be 0. To make the theorem applicable for $d=r$ , we could replace the $\lambda$ parameter appearing in \eqref{eq:initial}, with $\lambda' = \lambda + \sigma_d(\mX_\star)$. In that case, when $d > r$, we have $\lambda'= \lambda$ since $\sigma_d(\mX_\star)=0$. But when $d = r, \lambda' = \sigma_r(\mX_\star)$ (since $\lambda=0$),and the initialization condition and rate of convergence (see supplementary material) boils down to a similar form, up to some constants, to the one in \citep{tong2021low}.

\vspace{0.1cm}
\begin{remark}
As is shown in Theorem \ref{theo:main_m}, the rate is independent of the condition number of $\mX_\star$. The linear rate provided in Theorem \ref{theo:main_m} is a simplified form of the general expression:
\begin{small}
\begin{equation} \label{eq:general_rate}
\begin{split}
&\rho(\chi,\epsilon,\lambda)=1\\
& - \frac{1}{2\chi^2}\sqrt{\frac{\sqrt{2}-1}{1+2\lambda}}\Bigg(\sqrt{\frac{\sqrt{2}-1}{1+2\lambda}}\Big(2 -  
\frac{1}{(1-\sqrt{\epsilon}(\sqrt{\epsilon} + \sqrt{2}\bar{\lambda}^{\frac{1}{4}}))^2}\Big)\\
&- 2\chi \sqrt{2}  
 \left(\frac{3}{2}\epsilon  + 2 \frac{\sqrt{\epsilon}(\sqrt{\epsilon} + \sqrt{2}\bar{\lambda}^{\frac{1}{4}})}{1- \sqrt{\epsilon}(\sqrt{\epsilon} + \sqrt{2}\bar{\lambda}^{\frac{1}{4}}) }\right)\Bigg).
\end{split}
\end{equation}
\end{small}%
Note that this general form holds for any value of $\lambda\geq 0$, and its detailed derivation can be found in the supplementary material (\Cref{theo:main}).
\end{remark}

\begin{remark}
The level of overparameterization influences the rate of convergence through $\chi$. Specifically,  $\chi$ depends on the landscape properties of the objective as expressed by the Lipschitcz constant $L$ and the restricted sharpness constant $\mu$. Generally speaking, $\chi = \frac{L}{\mu}$ increases with $d$ ($L \uparrow, \mu \downarrow$) making the rate of convergence slower. This effect of the overestimated rank $d$ on the rate of convergence  is demonstrated in the numerical experiments section (see Figure \ref{fig:Convergence_rank5}).
\end{remark}

\subsection{Iteration Complexity for Matrix Sensing}
In this section, we analyze the iteration complexity, i.e., the number of iterates $T$ required to reach $\|\mL_T\mR^\top_T - \mX_\star\|_F \leq \epsilon$, namely $\epsilon$-accuracy. This study will focus on the matrix sensing problem.
\subsubsection{Noiseless Matrix Sensing}
Herein, we focus on the noiseless case without the presence of outliers, where it holds that $\mathbf{y}= \mathcal{A}\left(\mX_\star\right)$.
\begin{definition}[Mixed-norm RIP]Let $2d>0$ denote the rank of $\mX$ and $\mathcal{A}(\cdot)$
a linear map. We define the $\delta^-_{2d},\delta^+_{2d}$, as the lower and upper uniform bounds, respectively, of the quantity $\frac{\|\mathcal{A}\left(\mX\right)\|_1}{\|\mX\|_F}$ for all matrices $\mX$ of rank at most $2d$.
\end{definition}
The mixed-norm RIP is empirically verified for the Gaussian linear map in \Cref{fig:mixnormRIP}.
\begin{figure}[h]
    \centering
    \includegraphics[width=0.49\textwidth]{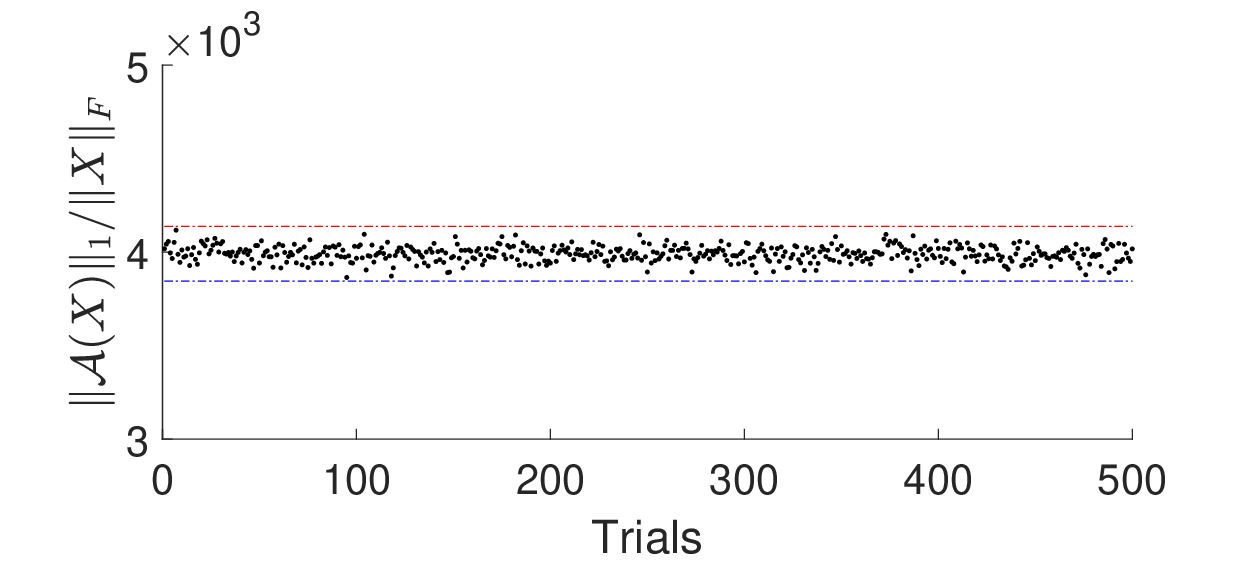}\\
    \vspace{-0.05in}
    \caption{Values of $\frac{\|\mathcal{A}(\mX)\|_1}{\|\mX\|_F}$ for Gaussian map $\mathcal{A}:\R^{500\times500}\rightarrow\R^{5000}$, where $\mathrm{rank}(\mX)=10$. Each point represents one result of 500 random trials. Blue and red dash lines are the lower and upper uniform bounds, respectively.}
    \label{fig:mixnormRIP}
\end{figure}

\begin{proposition}[\small{Lipschitz Continuity and Restricted Sharpness--No Outliers}]\label{prop:noiseless_matr_sens_lip}
If $\mathcal{A}(\cdot)$ satisfies the mixed-norm property with constants $\delta^-_{2d},\delta^+_{2d}$, then $\mathcal{L}(\mX) = \|\mathbf{y}-\mathcal{A}\left(\mX\right)\|_1$ satisfies the rank-$d$ restricted $L$-Lipschitz continuity and rank-$d$ restricted sharpness with constants,
\begin{equation}
L = \delta^+_{2d} \;\; \text{and}\;\; \mu = \delta^-_{2d} .
\end{equation}
\end{proposition}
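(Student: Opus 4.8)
The plan is to verify directly that the two claimed constants follow from the mixed-norm RIP applied to the residual map. The key observation is that $\mathcal{L}(\mX)=\|\mathbf{y}-\mathcal{A}(\mX)\|_1$ and, in the noiseless case, $\mathbf{y}=\mathcal{A}(\mX_\star)$, so linearity of $\mathcal{A}$ gives $\mathcal{L}(\mX)=\|\mathcal{A}(\mX_\star-\mX)\|_1$. Both properties to be established are then statements about $\|\mathcal{A}(\cdot)\|_1$ applied to differences of low-rank matrices.

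First I would handle restricted sharpness. For any $\mX$ with $\operatorname{rank}(\mX)\le d$, the difference $\mX_\star-\mX$ has rank at most $d+r\le 2d$ (using $r\le d$), so $\mX_\star-\mX$ is admissible for the mixed-norm RIP. Hence
\begin{equation*}
\mathcal{L}(\mX)-\mathcal{L}(\mX_\star)=\|\mathcal{A}(\mX_\star-\mX)\|_1-0\ge \delta^-_{2d}\,\|\mX_\star-\mX\|_F,
\end{equation*}
which is exactly rank-$d$ restricted $\mu$-sharpness with $\mu=\delta^-_{2d}$ (using $\mathcal{L}(\mX_\star)=\|\mathcal{A}(0)\|_1=0$).

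Next I would handle restricted Lipschitz continuity. For $\mX_1,\mX_2$ with $\operatorname{rank}(\mX_1-\mX_2)\le 2d$, the reverse triangle inequality for $\|\cdot\|_1$ gives
\begin{equation*}
|\mathcal{L}(\mX_1)-\mathcal{L}(\mX_2)|=\big|\,\|\mathbf{y}-\mathcal{A}(\mX_1)\|_1-\|\mathbf{y}-\mathcal{A}(\mX_2)\|_1\,\big|\le \|\mathcal{A}(\mX_1)-\mathcal{A}(\mX_2)\|_1=\|\mathcal{A}(\mX_1-\mX_2)\|_1,
\end{equation*}
and the upper mixed-norm RIP bound applied to $\mX_1-\mX_2$ (rank at most $2d$) yields $\|\mathcal{A}(\mX_1-\mX_2)\|_1\le \delta^+_{2d}\|\mX_1-\mX_2\|_F$, i.e.\ rank-$d$ restricted $L$-Lipschitz continuity with $L=\delta^+_{2d}$.

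There is no serious obstacle here — the proof is essentially a two-line unpacking of definitions once one notices $\mathcal{L}(\mX)=\|\mathcal{A}(\mX_\star-\mX)\|_1$. The only point that needs a word of care is the rank bookkeeping: one must note that in the sharpness step the relevant difference has rank at most $d+r$, which is bounded by $2d$ precisely because $d\ge r$, so the mixed-norm RIP (stated for rank at most $2d$) applies. I would state this rank bound explicitly to keep the argument self-contained.
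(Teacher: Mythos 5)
Your proof is correct and is exactly the standard argument the paper invokes by deferring to Proposition~1 of Tong et al.\ (2021): for sharpness, use $\mathbf{y}=\mathcal{A}(\mX_\star)$ to write $\mathcal{L}(\mX)=\|\mathcal{A}(\mX_\star-\mX)\|_1$, note $\operatorname{rank}(\mX_\star-\mX)\le r+d\le 2d$, and apply the lower mixed-norm RIP bound; for Lipschitz continuity, use the reverse triangle inequality together with linearity of $\mathcal{A}$ and the upper bound. Your explicit rank bookkeeping (that $d\ge r$ is what makes the RIP applicable) is the right thing to flag, and the observation that the Lipschitz half does not even need the noiseless assumption on $\mathbf{y}$ is a correct and harmless extra.
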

\textit{Proof:} Similar to Proposition 1 in \citep{tong2021low}.

\begin{corollary}[\small{Iteration Complexity--No Outliers}]
Using the same setting as in Theorem \ref{theo:main_m}, we use the Lipschitz constant and restricted sharpness constant of Proposition \ref{prop:noiseless_matr_sens_lip}, we get iteration complexity for noiseless matrix sensing is $\mathcal{O}\Big(\left(\frac{\delta^+_{2d}}{\delta^-_{2d}}\right)^2\mathrm{log}(\frac{1}{\epsilon})\Big)$.
\end{corollary}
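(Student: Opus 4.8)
The plan is to obtain the iteration count by plugging the landscape constants of \Cref{prop:noiseless_matr_sens_lip} into the linear rate of \Cref{theo:main_m} and then inverting the geometric decay. First I would record that in the noiseless regime $\mathbf{y} = \mathcal{A}(\mX_\star)$, so $\mathcal{L}(\mX_\star) = \|\mathbf{y} - \mathcal{A}(\mX_\star)\|_1 = 0$; hence the Polyak step size \eqref{eq:polyak_main} is well defined, and the hypotheses of \Cref{theo:main_m} hold once a sufficiently close initialization is available — this is part of the inherited setting and is achievable in practice by a truncated spectral method as in \citep{zhang2016provable}. By \Cref{prop:noiseless_matr_sens_lip}, $\mathcal{L}(\cdot) = \|\mathbf{y} - \mathcal{A}(\cdot)\|_1$ is rank-$d$ restricted $L$-Lipschitz with $L = \delta^+_{2d}$ and rank-$d$ restricted $\mu$-sharp with $\mu = \delta^-_{2d}$, so the ratio governing the rate is $\chi = L/\mu = \delta^+_{2d}/\delta^-_{2d}$.

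Next I would invoke the second inequality of \Cref{theo:main_m}, namely
\[
\|\mL_t\mR_t^\top - \mX_\star\|_F \le \Big(1 - \tfrac{0.12}{\chi^2}\Big)^{t/2} C, \qquad C := \frac{1.5\times 10^{-4}}{20\,\chi}\,\|\mX_\star\|_{op}^{-1/2}.
\]
To guarantee $\epsilon$-accuracy it suffices that $(1 - 0.12/\chi^2)^{T/2} C \le \epsilon$. Taking logarithms and using the elementary bound $\log\frac{1}{1-x} \ge x$ for $x \in (0,1)$ with $x = 0.12/\chi^2$, this is implied by $T \ge \frac{2}{0.12}\,\chi^2 \log\frac{C}{\epsilon}$, so $T = \big\lceil \frac{2}{0.12}\,\chi^2 \log(C/\epsilon)\big\rceil$ iterations are enough. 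Since $C$ depends only on $\chi$ and $\|\mX_\star\|_{op}$, which are fixed, one has $\log(C/\epsilon) = \mathcal{O}(\log(1/\epsilon))$ as $\epsilon \to 0$; hence $T = \mathcal{O}\!\big(\chi^2 \log(1/\epsilon)\big)$, and substituting $\chi = \delta^+_{2d}/\delta^-_{2d}$ gives the claimed $\mathcal{O}\!\big((\delta^+_{2d}/\delta^-_{2d})^2 \log(1/\epsilon)\big)$.

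There is no genuine analytic obstacle here — the corollary is a direct substitution into the already-established rate — but two bookkeeping points need care. One is the notation clash between the target accuracy $\epsilon$ appearing in the corollary and the fixed quantity used in the initialization bound \eqref{eq:initial} of \Cref{theo:main_m}; I would rename the latter (e.g.\ $\epsilon_0 = 10^{-4}/(\chi\|\mX_\star\|_{op}^{1/2})$) so the $\epsilon$ in the complexity is unambiguously the accuracy. The other is verifying that the $\chi$- and $\kappa(\mX_\star)$-dependent prefactor $C$ is absorbed harmlessly inside the logarithm rather than multiplying it, which is immediate because $\log(C/\epsilon) = \log C + \log(1/\epsilon)$ and $\log C$ is a constant. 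One may additionally remark that, since $\delta^\pm_{2d}$ themselves depend on the overparameterized rank $d$, the bound makes the influence of overparameterization on iteration complexity explicit.
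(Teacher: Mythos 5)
Your proposal is correct, and it is exactly the argument the paper leaves implicit: the corollary is obtained by substituting $L=\delta^+_{2d}$, $\mu=\delta^-_{2d}$ from Proposition 5.7 into the linear rate of Theorem~\ref{theo:main_m} so that $\chi = \delta^+_{2d}/\delta^-_{2d}$, and then inverting the geometric decay via $\log\tfrac{1}{1-x}\ge x$ to get $T = \mathcal{O}(\chi^2 \log(1/\epsilon))$. Your bookkeeping remarks (distinguishing the target accuracy $\epsilon$ from the $\epsilon$ fixed in the initialization bound \eqref{eq:initial}, and noting that the prefactor $C$ is absorbed into the logarithm) are correct and tidy up points the paper glosses over.
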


\begin{figure*}[t]
    \centering
    \includegraphics[width = 0.42\textwidth]{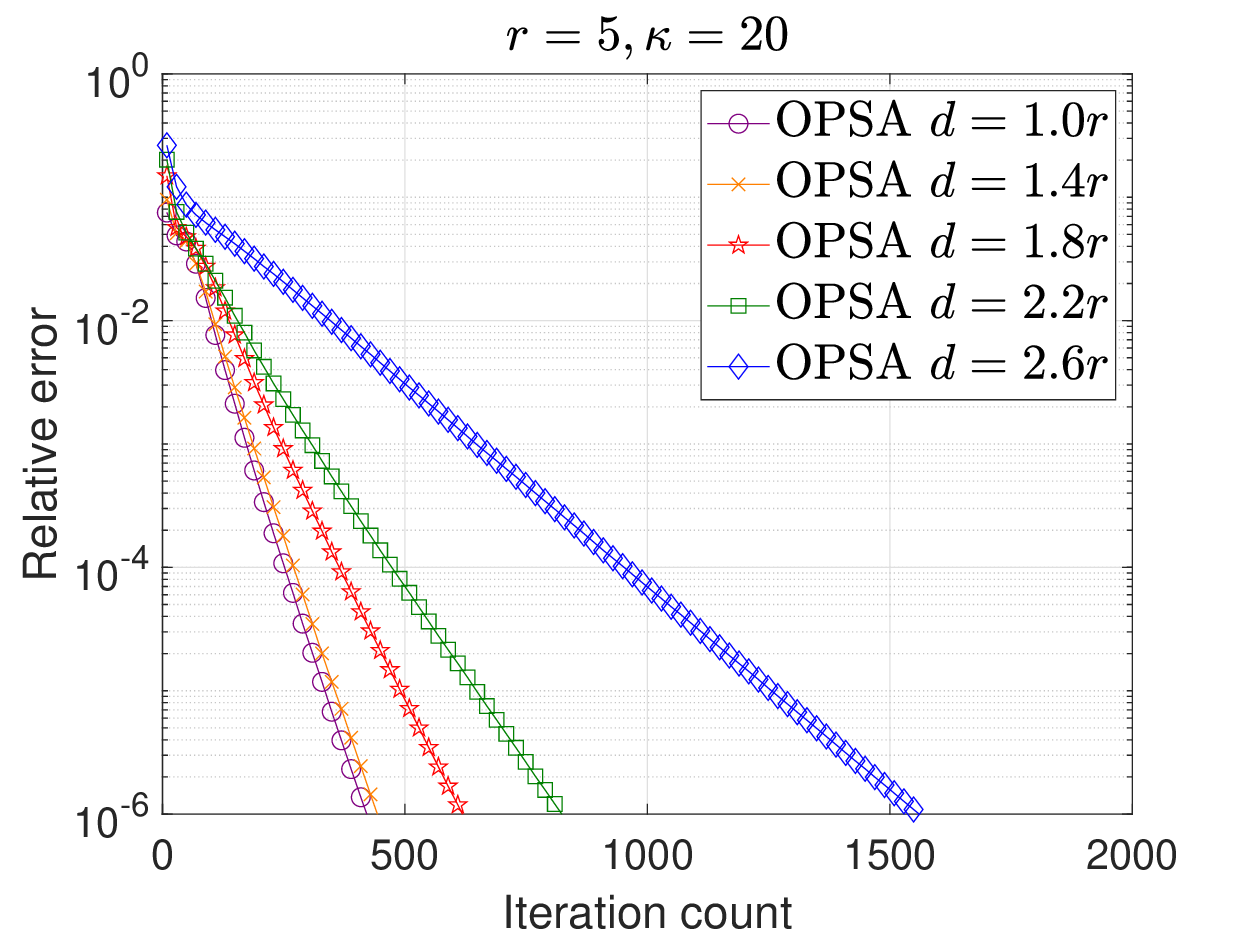}\qquad
    \vspace{0.1in}
    \includegraphics[width = 0.42\textwidth]{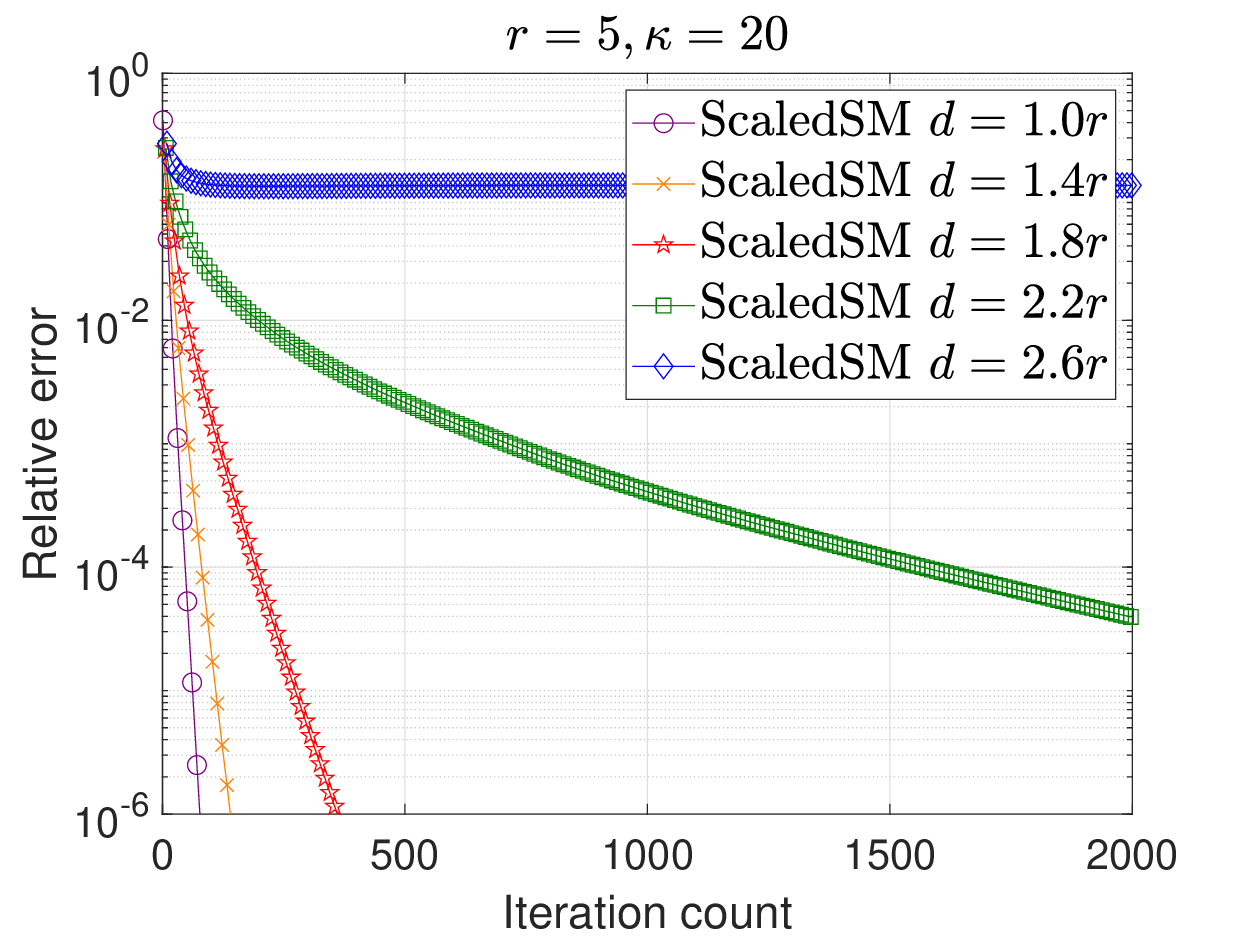}
    \\
    \vspace{-0.05in}
    \caption{Performance comparison between OPSA (top) and ScaledSM (bottom) with different overparameterization $d$, where $n,r,\kappa,\lambda,\textnormal{outlier} = 100,5,20,2,10\%$.}
    \label{fig:Convergence_rank5}
    \vspace{-0.05in}
\end{figure*}

\begin{figure*}[t]
    \centering
    \includegraphics[width = 0.42\textwidth]{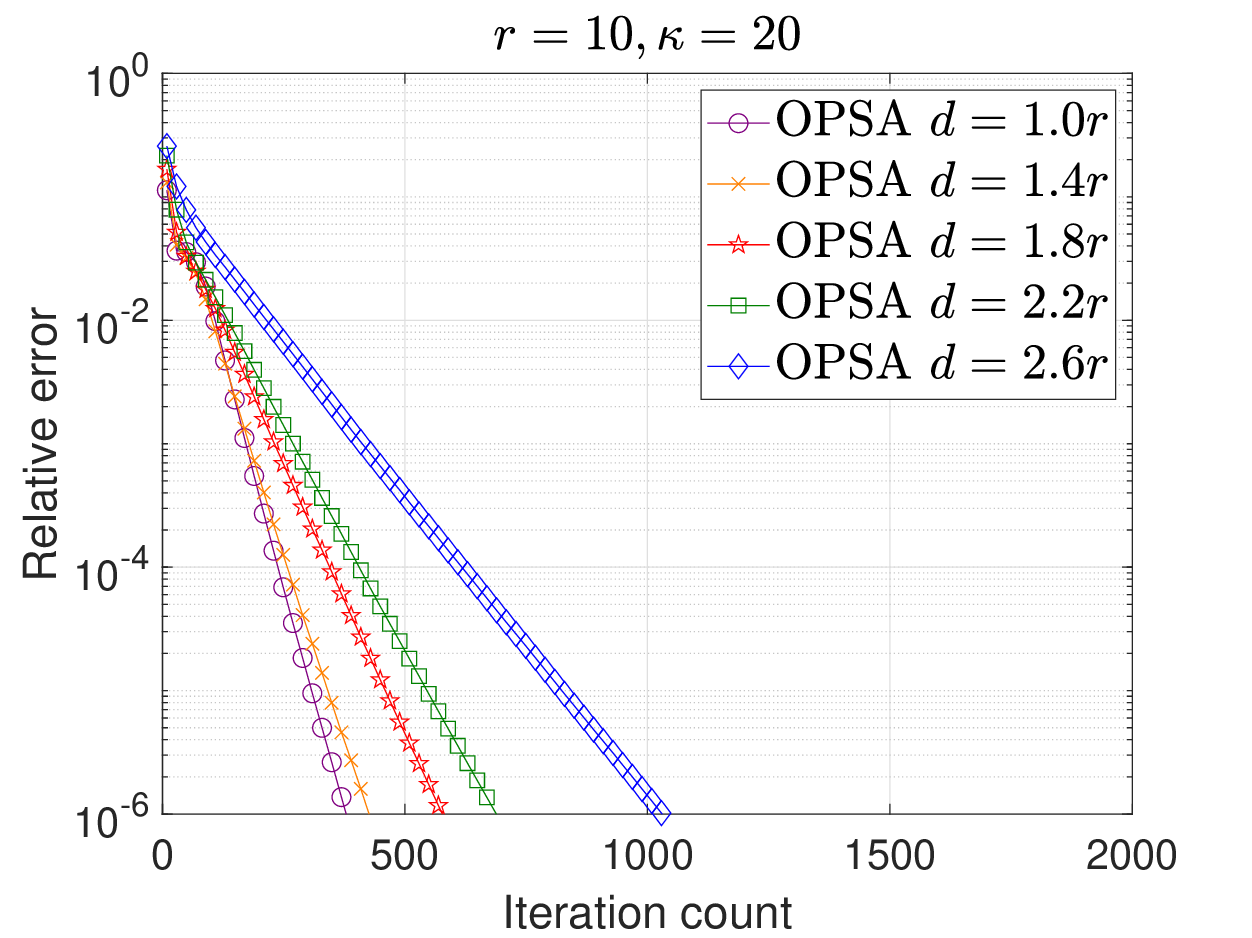}\qquad
    \vspace{0.1in}
    \includegraphics[width = 0.42\textwidth]{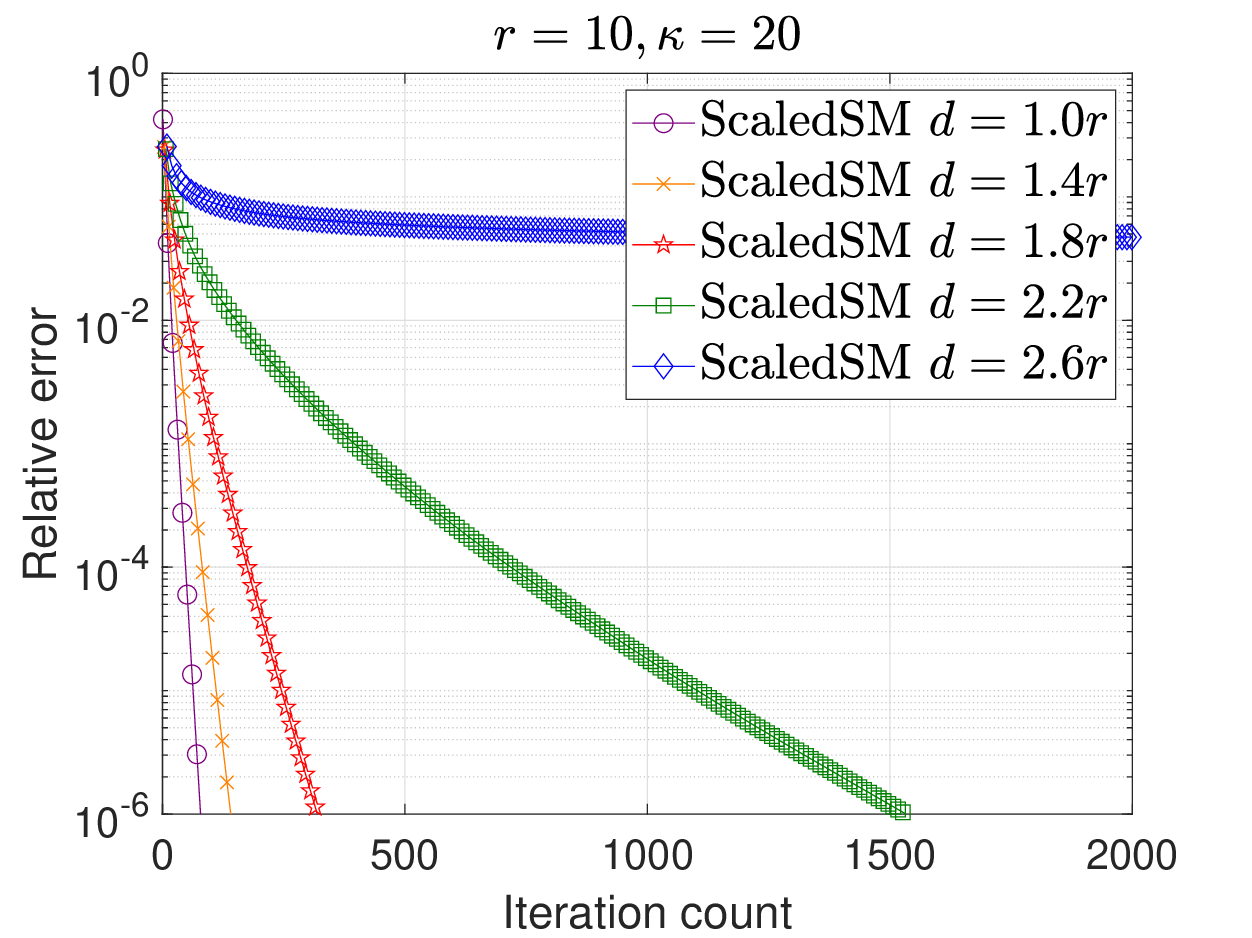}
    \\
    \vspace{-0.05in}
    \caption{Performance comparison between OPSA (top) and ScaledSM (bottom) with different overparameterization $d$, where $n,r,\kappa,\lambda,\textnormal{outlier} = 100,10,20,2,10\%$.}
    \label{fig:Convergence_rank10}
    \vspace{-0.05in}
\end{figure*}

\subsubsection{Robust Matrix Sensing}
Here we consider the presence of outliers. We thus first define the $\mathcal{S}$-outlier type bound condition. The S-outlier bound property has been used in the robust low-rank matrix recovery problem in prior works, e.g., \cite{tong2021low}. It actually encodes a property of  that allows restricted sharpness condition to be satisfied in matrix sensing problems in the presence of outliers. A detailed derivation of this condition, as a natural generalization of RIP can be found in \cite{charisopoulos2021low}.
\begin{definition}[$\mathcal{S}$-outlier Type Bound] 
The linear map $\mathcal{A}(\cdot)$ satisfies the rank-$2d$ $\mathcal{S}$-outlier type bound w.r.t.~a set $\mathcal{S}$ with a constant $\delta^0$ if for all matrices $\mX\in \mathbb{R}^{m\times n}$ of rank at most $2d$ we have
\begin{equation}
 \delta^0 \|\mX\|_F \leq \|\mathcal{A}_{\mathcal{S}^c}\left(\mX\right)\|_1 - \|\mathcal{A}_{\mathcal{S}}\left(\mX\right)\|_1 ,
\end{equation}
where $\mathcal{A}_\mathcal{S}\left(\mX\right) = \{\mathcal{A}_i \left(\mX\right)\}_{i\in \mathcal{S}}$ and  $\mathcal{A}_{\mathcal{S}^c}\left(\mX\right) = \{\mathcal{A}_i \left(\mX\right)\}_{i\in \mathcal{S}^c}$.
\end{definition}

\begin{proposition}[Matrix Sensing with Outliers]\label{prop:noiseless_matr_sens_out}
Let $\mathcal{A}\left(\cdot\right)$ the rank-$2d$ mixed-norm RIP with ($\delta^-_{2d},\delta^+_{2d}$) and the $\mathcal{S}$-outlier bound property defined above with $\delta^o$. Then $\mathcal{L}(\mX)$ satisfies the rank-$d$ restricted $L$-Lipschitz continuity and $\mu$-sharpness with

\begin{equation}
L = \delta^+_{2d} \quad\textnormal{and}\quad \mu = \delta^0.
\end{equation}
\end{proposition}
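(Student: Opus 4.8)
The plan is to verify the two landscape properties separately, following the structure of the argument for the noiseless case (Proposition~\ref{prop:noiseless_matr_sens_lip}) but now carefully tracking the corruption vector $\mathbf{s}$, which by the observation model is supported on the outlier set $\mathcal{S}$.

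For the Lipschitz bound, I would take any $\mX_1,\mX_2$ whose difference has rank at most $2d$ and combine the reverse triangle inequality for $\|\cdot\|_1$ with the linearity of $\mathcal{A}$: $|\mathcal{L}(\mX_1)-\mathcal{L}(\mX_2)| = \big|\,\|\mathbf{y}-\mathcal{A}(\mX_1)\|_1 - \|\mathbf{y}-\mathcal{A}(\mX_2)\|_1\,\big| \le \|\mathcal{A}(\mX_1-\mX_2)\|_1 \le \delta^+_{2d}\|\mX_1-\mX_2\|_F$, where the last step is exactly the upper mixed-norm RIP bound applied to the rank-$\le 2d$ matrix $\mX_1-\mX_2$. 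Note this estimate does not see $\mathbf{s}$ at all, so $L=\delta^+_{2d}$ as claimed.

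For sharpness, fix $\mX$ of rank at most $d$ and set $\mathbf{e} := \mathcal{A}(\mX_\star-\mX)$, so that $\mathbf{y}-\mathcal{A}(\mX) = \mathbf{e}+\mathbf{s}$ while $\mathbf{y}-\mathcal{A}(\mX_\star)=\mathbf{s}$. Splitting coordinates over $\mathcal{S}$ and $\mathcal{S}^c$ and using that $\mathbf{s}$ vanishes off $\mathcal{S}$, I would write $\mathcal{L}(\mX)-\mathcal{L}(\mX_\star) = \|\mathbf{e}+\mathbf{s}\|_1-\|\mathbf{s}\|_1 = \big(\|\mathbf{e}_{\mathcal{S}}+\mathbf{s}_{\mathcal{S}}\|_1 - \|\mathbf{s}_{\mathcal{S}}\|_1\big) + \|\mathbf{e}_{\mathcal{S}^c}\|_1 \ge \|\mathbf{e}_{\mathcal{S}^c}\|_1 - \|\mathbf{e}_{\mathcal{S}}\|_1$, the final inequality being the triangle inequality on the $\mathcal{S}$-block. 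Identifying $\mathbf{e}_{\mathcal{S}} = \mathcal{A}_{\mathcal{S}}(\mX_\star-\mX)$ and $\mathbf{e}_{\mathcal{S}^c} = \mathcal{A}_{\mathcal{S}^c}(\mX_\star-\mX)$, and observing that $\mX_\star-\mX$ has rank at most $r+d\le 2d$ since $r\le d$, the rank-$2d$ $\mathcal{S}$-outlier type bound gives $\|\mathbf{e}_{\mathcal{S}^c}\|_1 - \|\mathbf{e}_{\mathcal{S}}\|_1 \ge \delta^0\|\mX_\star-\mX\|_F$, hence $\mu=\delta^0$.

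The argument is essentially routine; the only points needing care are invoking the rank-$2d$ hypotheses correctly (the difference of two rank-$\le d$ matrices has rank $\le 2d$, and $\mX_\star-\mX$ has rank $\le r+d\le 2d$) and using that $\mathbf{s}$ is supported on $\mathcal{S}$ so its $\mathcal{S}^c$-block is zero, which is what lets the full error contribution off $\mathcal{S}$ be retained. I do not expect a genuine obstacle here, since the substance of the statement has been absorbed into the definition of the $\mathcal{S}$-outlier type bound; the proposition is the step that translates that measurement-ensemble property into the landscape conditions required by Theorem~\ref{theo:main_m}.
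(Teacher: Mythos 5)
Your proof is correct and is essentially the same argument the paper relies on: the paper handles the Lipschitz constant ``as in the noiseless case'' (your reverse-triangle-inequality plus upper mixed-norm RIP step) and defers the sharpness bound to Proposition~2 of \citet{tong2021low}, which is precisely the $\mathcal{S}/\mathcal{S}^c$ decomposition and reverse triangle inequality you wrote out. You have simply made explicit what the paper delegates to the citation, correctly noting the rank bookkeeping ($\mathrm{rank}(\mX_1-\mX_2)\le 2d$ and $\mathrm{rank}(\mX_\star-\mX)\le r+d\le 2d$) and that $\mathbf{s}_{\mathcal{S}^c}=\mathbf{0}$.
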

\textit{Proof:} The Lipschitz constant can be derived following similar steps as in the noiseless case. For the restricted sharpness constant $\mu$ we use Proposition 2 of \citep{tong2021low}.
%
\begin{corollary}[\small{Iteration Complexity with Outliers}]\label{cor:outliers}
Under the same setting as in Theorem \ref{theo:main_m}, and by using the Lipschitz constant and restricted sharpness constant of Propositions \ref{prop:noiseless_matr_sens_out}, we get iteration complexity for matrix sensing, in the presence of outliers, is $\mathcal{O}\Big(\left(\frac{\delta^+_{2d}}{ \delta^0}\right)^2\mathrm{log}(\frac{1}{\epsilon})\Big)$.
\end{corollary}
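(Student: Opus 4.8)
The plan is to read off the iteration count directly from the linear rate of \Cref{theo:main_m} once the landscape constants are instantiated. First I would check that the hypotheses of \Cref{theo:main_m} hold in the robust matrix sensing setting: the loss $\mathcal{L}(\mX)=\|\mathbf{y}-\mathcal{A}(\mX)\|_1$ is convex in $\mX$, and by \Cref{prop:noiseless_matr_sens_out} it is rank-$d$ restricted $L$-Lipschitz with $L=\delta^+_{2d}$ and rank-$d$ restricted $\mu$-sharp with $\mu=\delta^0$ (the latter being where the $\mathcal{S}$-outlier bound enters); consequently $\chi=L/\mu=\delta^+_{2d}/\delta^0$. Assuming a spectral initialization that meets \eqref{eq:initial} with this value of $\chi$, \Cref{theo:main_m} yields
\[
\|\mL_t\mR_t^\top-\mX_\star\|_F\ \le\ \Big(1-\tfrac{0.12}{\chi^2}\Big)^{t/2}\,C,\qquad C:=\tfrac{1.5\times 10^{-4}\,\|\mX_\star\|_{op}^{-1/2}}{20\,\chi}.
\]

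Next I would invert this bound. To guarantee $\|\mL_T\mR_T^\top-\mX_\star\|_F\le\epsilon$ it suffices that $(1-0.12/\chi^2)^{T/2}C\le\epsilon$; taking logarithms, this is implied by $T\ge 2\log(C/\epsilon)\,/\,\bigl(-\log(1-0.12/\chi^2)\bigr)$. Since the Lipschitz and sharpness constants always satisfy $\mu\le L$ (sharpness and Lipschitzness evaluated at the same rank-$\le d$ point force $\mu\|\mX-\mX_\star\|_F\le L\|\mX-\mX_\star\|_F$), we have $\chi\ge 1$, so $x:=0.12/\chi^2\in(0,1)$ and the elementary inequality $-\log(1-x)\ge x$ lower-bounds the denominator by $0.12/\chi^2$. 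Hence it is enough to take
\[
T\ =\ \Big\lceil \tfrac{\chi^2}{0.12}\,\log\tfrac{C}{\epsilon}\Big\rceil\ =\ \mathcal{O}\!\Big(\chi^2\log\tfrac1\epsilon\Big)\ =\ \mathcal{O}\!\Big(\big(\tfrac{\delta^+_{2d}}{\delta^0}\big)^2\log\tfrac1\epsilon\Big),
\]
the last equality absorbing $\log C$ into the hidden constant, since $C$ depends only on $\|\mX_\star\|_{op}$ and $\chi$, which are held fixed (and is at worst an additive $\mathcal{O}(\log(1/\epsilon))$ term when $\epsilon$ is taken polynomially small).

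There is no genuine analytic obstacle here: every quantitative ingredient is inherited from \Cref{theo:main_m} and \Cref{prop:noiseless_matr_sens_out}, and the argument is the standard conversion of a geometric contraction into a logarithmic iteration count. The two points I would be careful about are purely bookkeeping — (i) ensuring the initialization constant $C$ contributes only to the logarithmic/constant factor and does not corrupt the stated $\chi^2\log(1/\epsilon)$ scaling, and (ii) observing that the overparameterization level $d$ enters the complexity solely through the pair $(\delta^+_{2d},\delta^0)$, with $\delta^+_{2d}$ tending to grow and the outlier-robust sharpness constant $\delta^0$ tending to shrink as $d$ increases, so that a larger $d$ gives a larger $\chi$ and hence slower convergence — in line with the no-outlier corollary and with the behavior observed in \Cref{fig:Convergence_rank5}.
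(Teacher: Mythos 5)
Your proposal is correct and takes essentially the same route the paper intends: plug the constants $L=\delta^+_{2d}$, $\mu=\delta^0$ from Proposition~\ref{prop:noiseless_matr_sens_out} into the linear rate of Theorem~\ref{theo:main_m}, obtain $\chi=\delta^+_{2d}/\delta^0$, and invert the geometric contraction via $-\log(1-x)\ge x$ to read off $T=\mathcal{O}\big(\chi^2\log(1/\epsilon)\big)$. The paper gives no explicit proof of this corollary because it is exactly this one-line conversion, and your bookkeeping (absorbing the initialization constant $C$ into the hidden constant, noting $\chi\ge 1$) is the standard and correct way to make it rigorous.
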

It should be noted that Corollary \ref{cor:outliers} implies that in the presence of outliers, the iteration complexity increases at an amount depending on the properties of the measurement matrices (through $\delta^0$).

\begin{remark}
Assuming a measurement operator $\mathcal{A}_i(\mX) = \frac{1}{p}\langle \mA_i, \mX \rangle$, where $\mA_i$ has Gaussian i.i.d.  entries $\mathcal{N}(0,1)$,  we can invoke the results of \cite{charisopoulos2021low,tong2021low}, which show that $\mathcal{A}(\cdot)$ satisfies the mixed-RIP and and $\mathcal{S}$-outlier bound conditions with
\[
\delta^{-}_{2d} \gtrsim 1, \quad \delta^{+}_{2d} \lesssim 1, \quad \delta_0 \gtrsim 1 - 2p_s,
\]
where $p_s \in  [0,\frac{1}{2})$ is the fraction of outliers, as long as 
\[
p \gtrsim \frac{(m + n)d}{(1 - 2 p_s)^2} \log\left(\frac{1}{1 - 2p_s} \right).
\]
Hence, under the same setting as in Theorem \ref{theo:main_m} OPSA converges linearly to $\epsilon$-accuracy in 
\[
O\left(\frac{1}{(1 - 2p_s)^2} \log \frac{1}{\epsilon} \right)
\]
iterations assuming that it is initialized appropriately (see statement of Theorem \ref{theo:main_m}).
\end{remark}

\section{Numerical Experiments}
\label{sec:numerical}
In this section, we verify the empirical performance of the proposed overparameterized Preconditioned Subgradient Algorithm (OPSA), i.e., \Cref{alg:main_alg}.

\begin{figure*}[th]
    \centering
    \includegraphics[width = 0.42\textwidth]{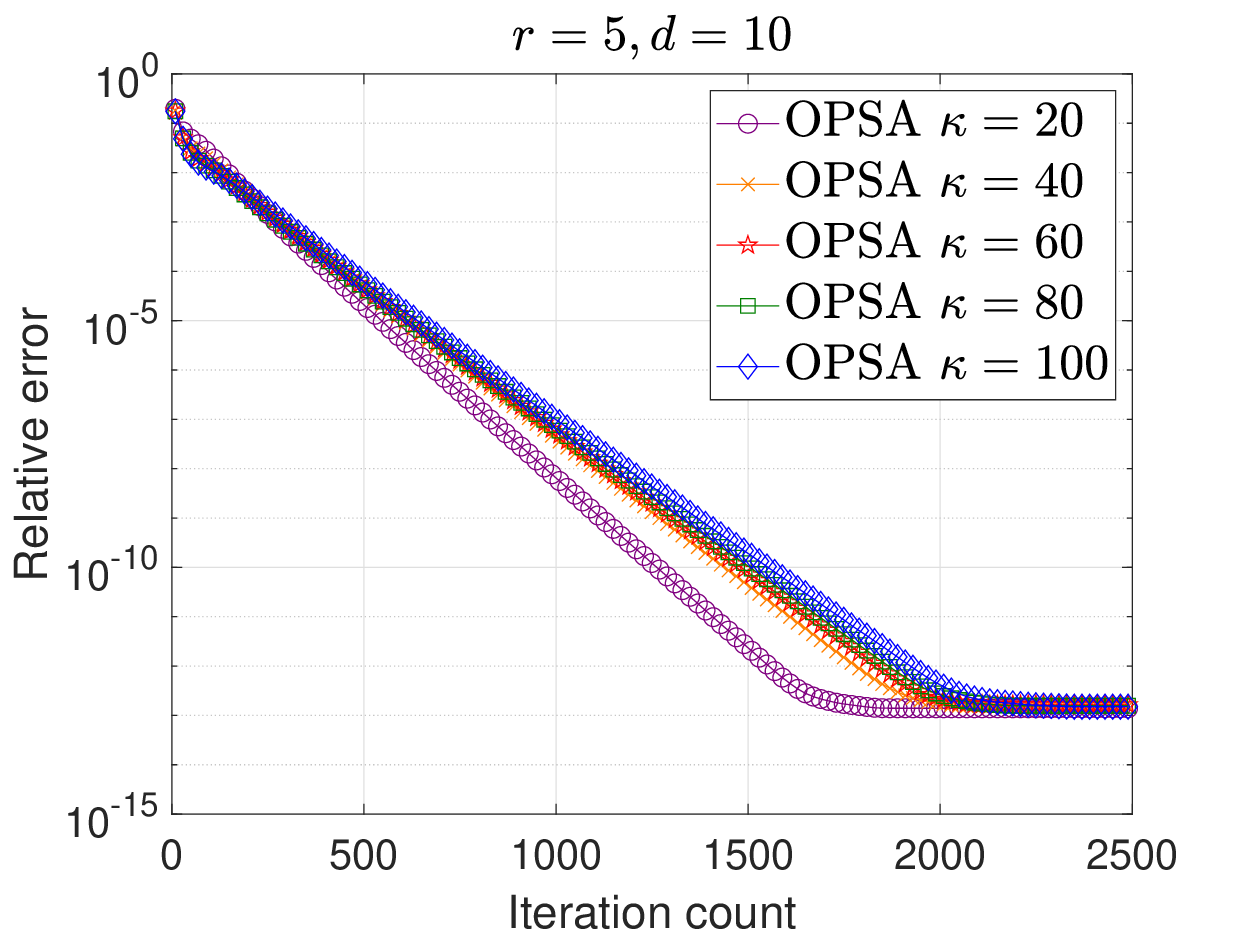}\qquad
    \vspace{0.1in}
    \includegraphics[width = 0.42\textwidth]{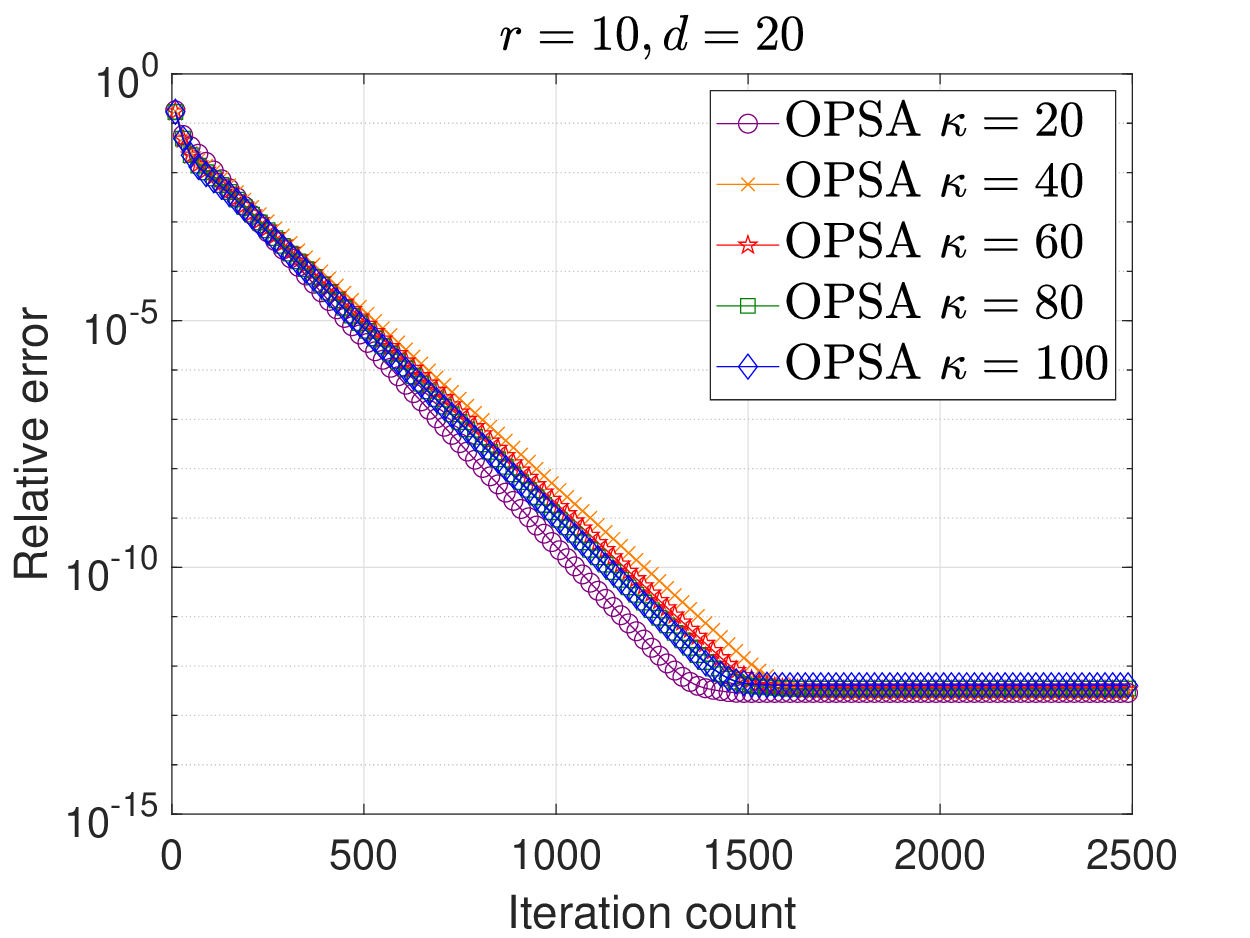}
    \\
    \vspace{-0.05in}
    \caption{OPSA performance with different condition numbers $\kappa$, where $n,\lambda,\textnormal{outlier}=100,2,10\%$. \textbf{Top}: $r,d=5,10$. \textbf{Bottom}: $r,d=10,20$. 
    }
    \label{fig:varying kappa}
    \vspace{-0.05in}
\end{figure*}

\begin{figure*}[th]
    \centering
    \includegraphics[width = 0.42\textwidth]{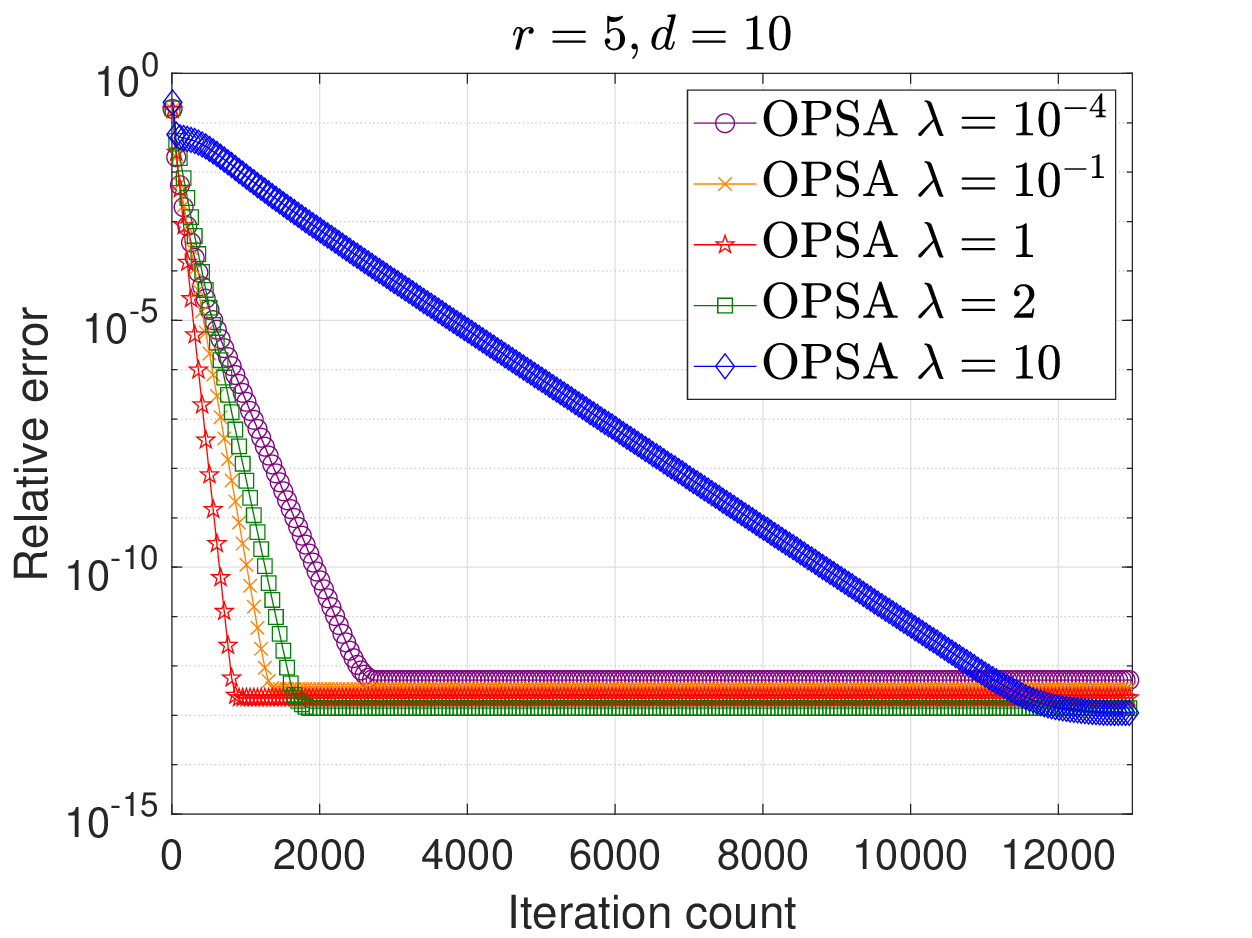}
    \qquad
    \vspace{0.1in}
    \includegraphics[width = 0.42\textwidth]{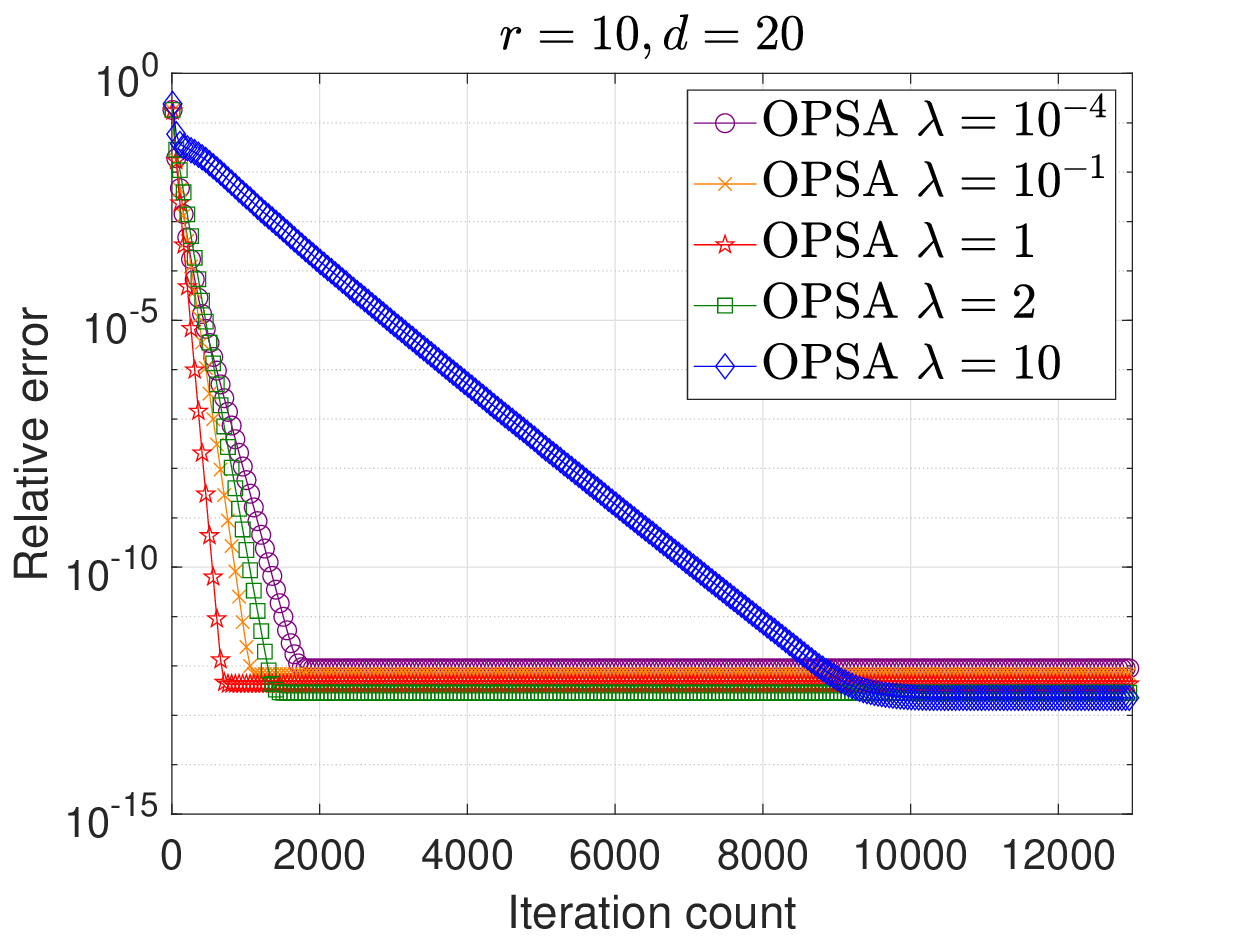}
    \\
    \vspace{-0.05in}
    \caption{OPSA performance with different $\lambda$, where $n,\kappa,\textnormal{outlier}=100,20,10\%$. \textbf{Top}: $r,d=5,10$. \textbf{Bottom}: $r,d=10,20$.}
    \label{fig:varying lambda}
    \vspace{-0.05in}
\end{figure*}

\begin{figure*}[th]
    \centering
    \includegraphics[width = 0.42\textwidth]{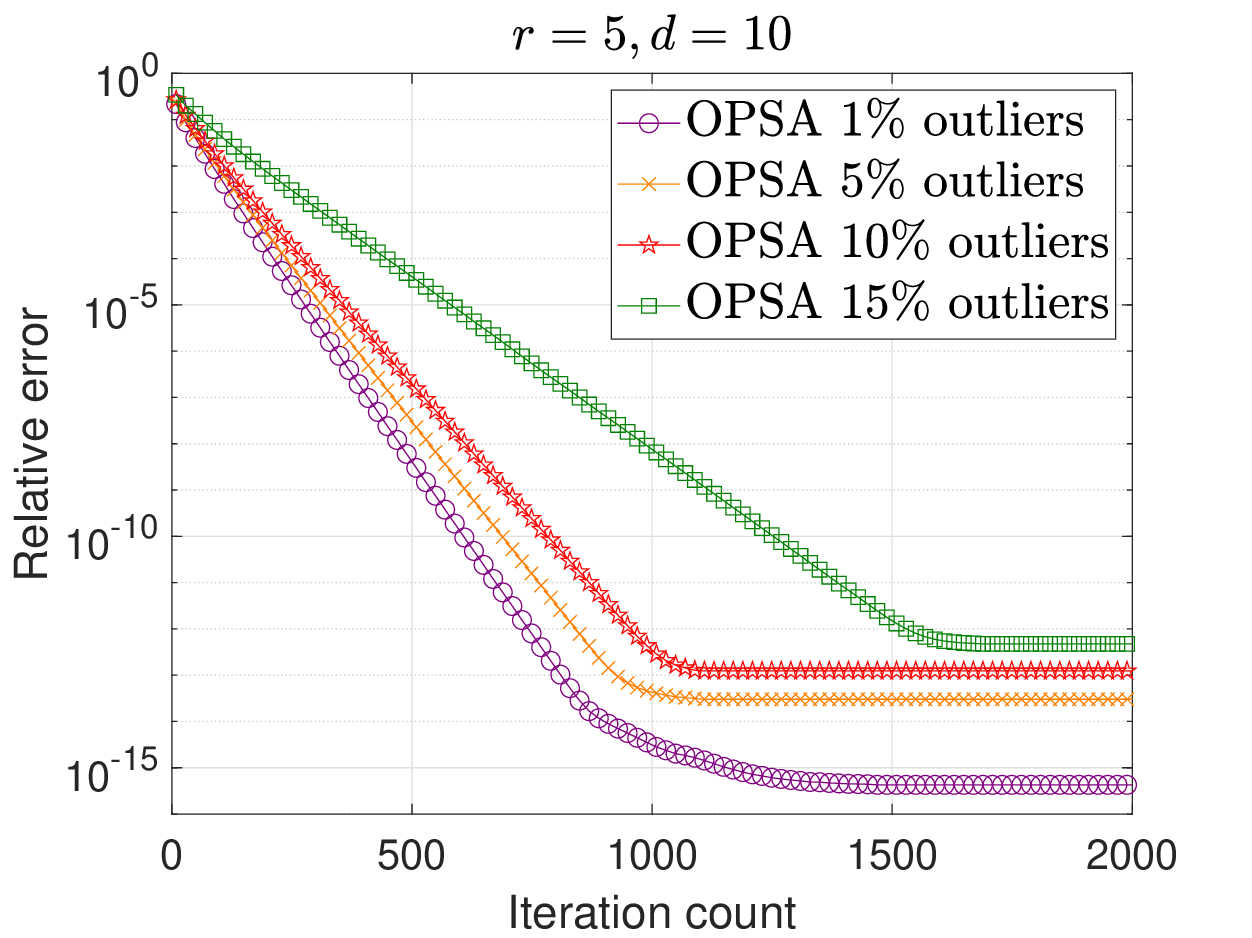}\qquad
    \includegraphics[width = 0.42\textwidth]{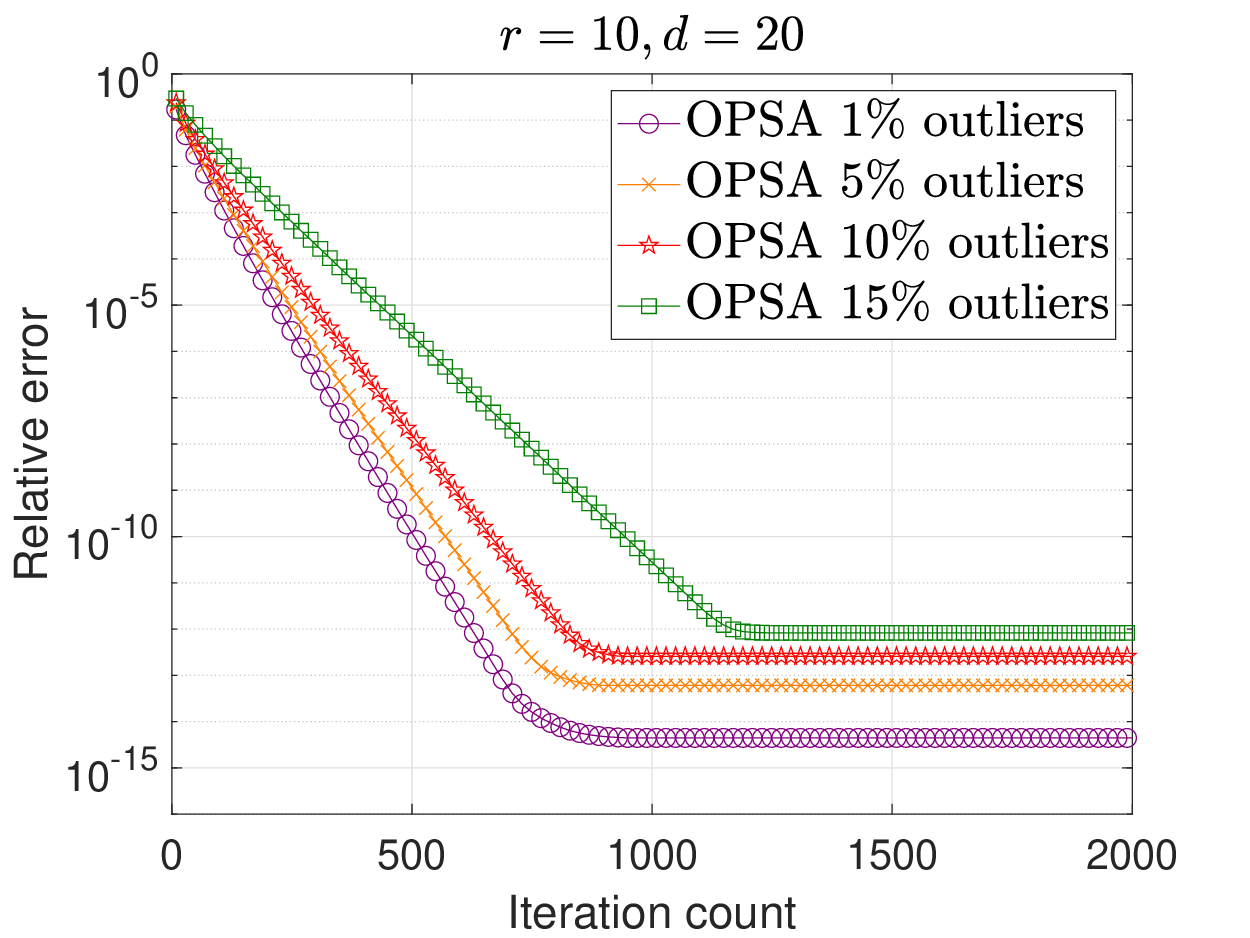}
    \\
    \vspace{-0.05in}
    \caption{OPSA performance with different outlier densities, where $n,\kappa,\lambda=100,20,2$. \textbf{Top}: $r,d=5,10$. \textbf{Bottom}: $r,d=10,20$.}
    \label{fig:varying outlier}
\end{figure*}

\textbf{Experimental setup.} 
The ground truth $\mX_\star$ is generated as a product of two $n\times r$ random matrices, and then the condition number is adjusted to be $\kappa$ by altering the singular values of $\mX_\star$. The observations of matrix sensing are obtained as described in \eqref{eq:observation model}, that is 
\begin{equation*}
\mathbf{y}_i = \mathcal{A}_i\left(\mX_\star\right) + \mathbf{s}_i, \qquad 1\leq i\leq p,
\end{equation*}
where the sensing operator  $\mathcal{A}_i(\mX_\star)=\frac{1}{p}\langle\mA_i,\mX_\star\rangle$ and $\mA_i$ is a Gaussian random matrix. The corruption vector $\mathbf{s}\in\R^{p}$ contains randomly positioned outliers whose values are drawn uniformly at random from the interval $[-10\|\mathcal{A}(\mX_\star)\|_\infty, 10\|\mathcal{A}_i(\mX_\star)\|_\infty]$. Through this section, $m=8nr$ measurements are used in all tests. 
The code of ScaledSM is obtained from the authors' website. For the sake of fairness, Polyak's stepsizes are used for all tested algorithms, thus ScaledSM requires no extra parameter tuning. For OPSA, $\lambda$ is the only parameter to be tuned, and an experiment in this section shows that $\lambda$ can be easily tuned in a wide favorable range. All the tests are conducted with Matlab 2024a on a mobile workstation equipped with an Intel i9-12950HX CPU and 64GB of RAM. The Matlab implementation for the proposed OPSA is available online at \url{https://github.com/caesarcai/OPSA}.

\begin{figure*}[th]
    \centering
    \includegraphics[width = 0.42\textwidth]{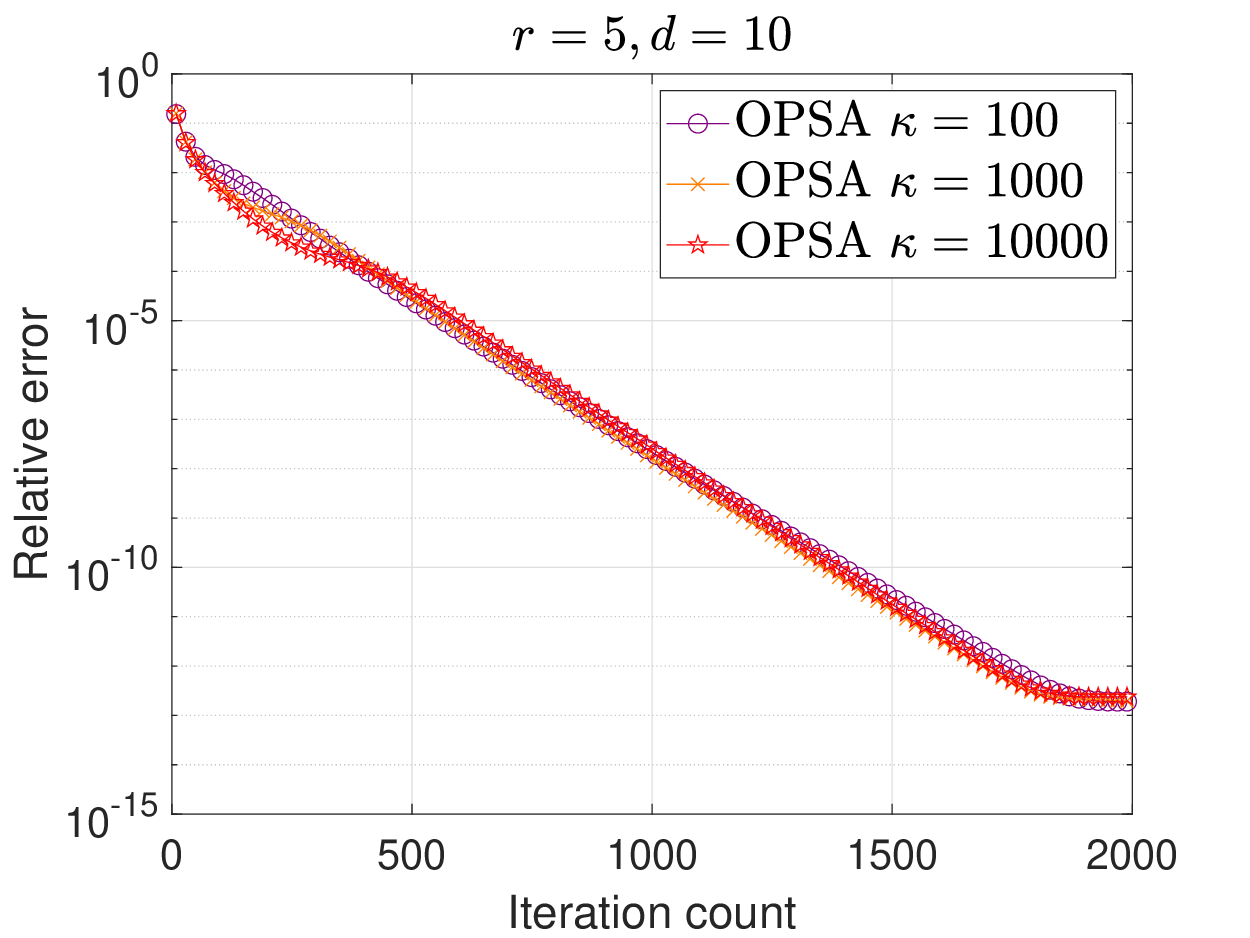}
    \qquad
    \includegraphics[width = 0.42\textwidth]{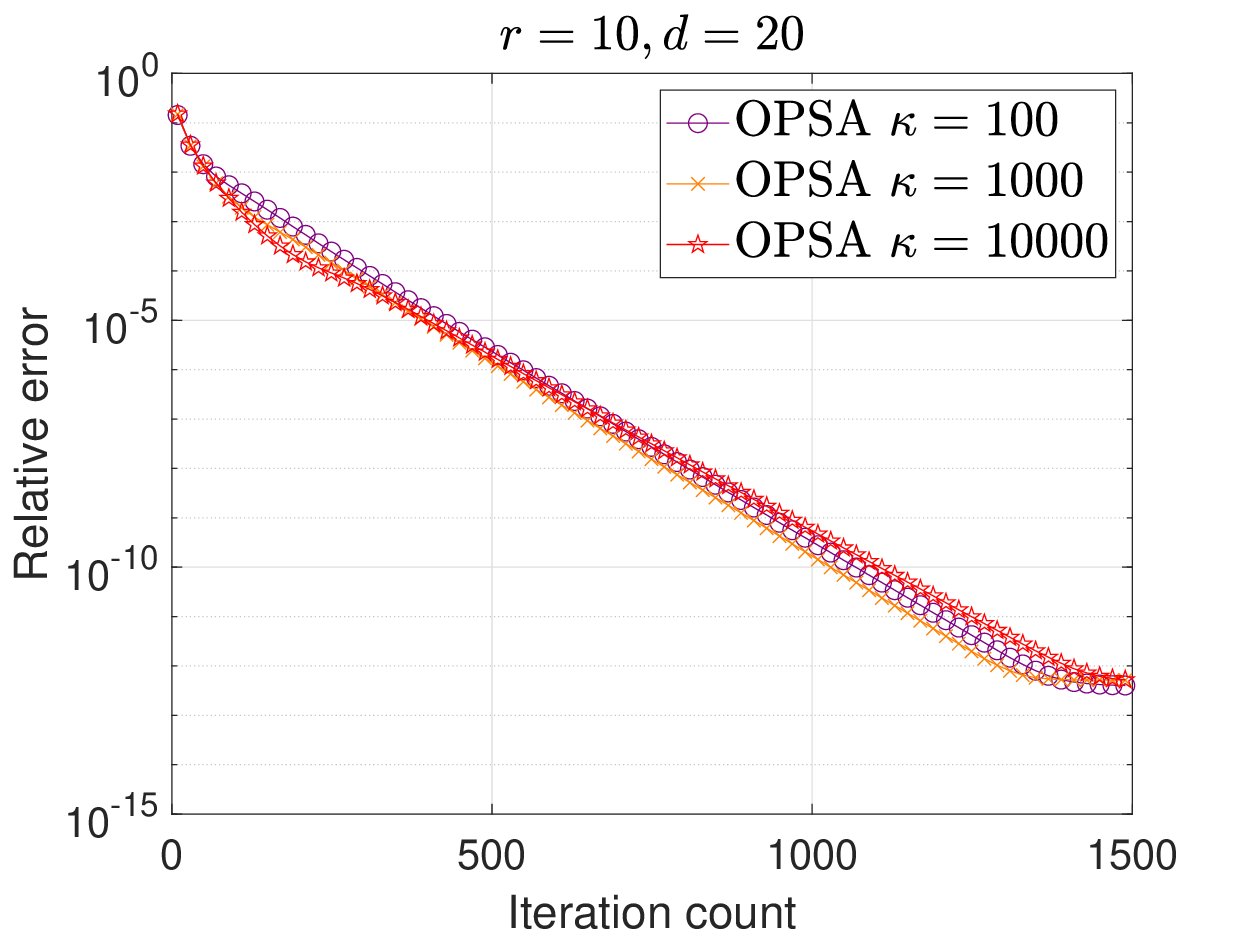}
    \\
    \vspace{-0.1in}
    \caption{OPSA performance with different large condition numbers $\kappa$, where $n,\lambda,\textnormal{outlier}=100,2,10\%$. \textbf{Left}: $r,d=5,10$. \textbf{Right}: $r,d=10,20$. 
    }
    \label{fig:varying large kappa}
\end{figure*}

\textbf{Performance with different overparameterizations $d$.}
We compare the convergence performance of OPSA against ScaledSM \citep{tong2021low}, the state-of-the-art subgradient method for robust matrix sensing, with different levels of overparameterization $d$.
The convergence is evaluated with respect to the relative error
$\|\mL_t\mR_t^\top-\mX_\star\|_F/\|\mX_\star\|_F$. The comparison results with true rank $r=5$ and $r=10$ are reported in \Cref{fig:Convergence_rank5,fig:Convergence_rank10}, respectively. For both algorithms, one can see that more overparameterization leads to slower convergence. However, OPSA constantly achieves linear convergence while ScaledSM fails to converge when major overparameterization happens. 

\textbf{Performance with different condition numbers $\kappa$.} In this experiment, we test the convergence performance of OPSA with different condition numbers of $\mX_\star$. The results with conditional numbers up to $100$ are reported in \Cref{fig:varying kappa}. One can observe that the convergence behavior and final accuracy of OPSA are not distinctly affected by larger condition numbers, even when the rank is heavily overestimated. This observation matches our main theoretical result. In  \Cref{fig:varying large kappa}, we conduct additional experiments for OPSA with extremely large condition numbers $\kappa$ up to $10,000$. The results are consistence with \Cref{fig:varying kappa}. Even with $\kappa=1,000$ or $\kappa=10,000$, OPSA still delivers stable convergence and achieves the same final accuracy. 


\textbf{Performance with different parameters $\lambda$.} Notice that $\lambda$ is the only parameter to be tuned in OPSA. 
In \Cref{fig:varying lambda}, we test the convergence performance of OPSA with different $\lambda$. One can observe that a mild $\lambda$ value (e.g., 0.1, 1, 2) helps OPSA maintain steep linear convergence when the rank is heavily overestimated. However, if the $\lambda$ parameter is set too large (e.g., 10) or too tiny (e.g., $10^{-4}$), OPSA converges much slower, although still at a linear rate. Overall, OPSA's performance is favorable with a wide range of $\lambda$ values, and thus the parameter tuning is easy for the proposed OPSA.


\textbf{Performance with different outlier densities.} In \Cref{fig:varying outlier}, OPSA is tested against different outlier densities. OPSA successfully recovers $\mX_\star$ and maintains linear convergence in all tests. Note that more iterations are needed, and slightly worse final accuracy can be achieved when more outliers are present. This is as expected since more outliers lead to harder recovery problems.

\textbf{More numerical results.} Additional numerical experiments on partially observed video background subtraction tasks, have been conducte and reported in the supplementary material.

\section{Conclusions}
In this paper, we proposed an overparameterized Preconditioned Subgradient Algorithm (OPSA) for robust low-rank matrix recovery. Our work goes beyond existing SOTA works by addressing the challenging scenario of robust low-rank matrix recovery in the case of {\it asymmetric} matrices of {\it unknown rank} using a preconditioned-type approach. Under certain landscape assumptions i.e., Lipschitz continuity and restricted sharpness conditions, we a) established a linear rate of convergence that is independent of the condition number of the unknown matrix, and b) derived the iteration complexity  matrix sensing, in the noiseless setting and in the presence of outliers. Numerical results corroborate our theoretical findings for different levels of overparameterization of the rank, and outliers, and the independence of the rate of convergence on the condition number of the sought matrix.


\vfill
\section*{Acknowledgements}
H.Q.~Cai acknowledges partial support from the National Science Foundation (NSF) through grant DMS-2304489. R. Vidal acknowledges partial support of
NSF grants 2212457 and 2031985, 
and 
Simons Foundation grant 814201.

\section*{Impact Statement}
Low-rank matrix recovery has been broadly applied as one of the fundamental techniques in data mining and machine
learning research. Hence, through its applications, the proposed preconditioned subgradient algorithm has potential broader impacts that data mining and machine learning have, as well as their limitations.

\bibliography{reference}

\begin{thebibliography}{44}
\providecommand{\natexlab}[1]{#1}
\providecommand{\url}[1]{\texttt{#1}}
\expandafter\ifx\csname urlstyle\endcsname\relax
  \providecommand{\doi}[1]{doi: #1}\else
  \providecommand{\doi}{doi: \begingroup \urlstyle{rm}\Url}\fi

\bibitem[Bhatia(2013)]{bhatia2013matrix}
Bhatia, R.
\newblock \emph{Matrix analysis}, volume 169.
\newblock Springer Science \& Business Media, 2013.

\bibitem[Birman et~al.(1975)Birman, Koplienko, and Solomyak]{BirKopSol75}
Birman, M.~S., Koplienko, L.~S., and Solomyak, M.~Z.
\newblock Estimates of the spectrum of a difference of fractional powers of selfadjoint operators.
\newblock \emph{Izvestiya Vysshikh Uchebnykh Zavedenii. Matematika}, \penalty0 (3):\penalty0 3--10, 1975.

\bibitem[Cai et~al.(2019)Cai, Cai, and Wei]{cai2019accaltproj}
Cai, H., Cai, J.-F., and Wei, K.
\newblock Accelerated alternating projections for robust principal component analysis.
\newblock \emph{Journal of Machine Learning Research}, 20\penalty0 (1):\penalty0 685--717, 2019.

\bibitem[Cai et~al.(2021{\natexlab{a}})Cai, Cai, Wang, and Yin]{cai2021asap}
Cai, H., Cai, J.-F., Wang, T., and Yin, G.
\newblock Accelerated structured alternating projections for robust spectrally sparse signal recovery.
\newblock \emph{{IEEE} Transactions on Signal Processing}, 69:\penalty0 809--821, 2021{\natexlab{a}}.

\bibitem[Cai et~al.(2021{\natexlab{b}})Cai, Hamm, Huang, Li, and Wang]{cai2021ircur}
Cai, H., Hamm, K., Huang, L., Li, J., and Wang, T.
\newblock Rapid robust principal component analysis: {CUR} accelerated inexact low rank estimation.
\newblock \emph{{IEEE} Signal Processing Letters}, 28:\penalty0 116--120, 2021{\natexlab{b}}.

\bibitem[Cai et~al.(2021{\natexlab{c}})Cai, Liu, and Yin]{cai2021lrpca}
Cai, H., Liu, J., and Yin, W.
\newblock Learned robust {PCA}: A scalable deep unfolding approach for high-dimensional outlier detection.
\newblock In \emph{Advances in Neural Information Processing Systems}, volume~34, pp.\  16977--16989, 2021{\natexlab{c}}.

\bibitem[Cai et~al.(2023)Cai, Huang, Li, and Needell]{cai2023ccs}
Cai, H., Huang, L., Li, P., and Needell, D.
\newblock Matrix completion with cross-concentrated sampling: Bridging uniform sampling and {CUR} sampling.
\newblock \emph{IEEE Transactions on Pattern Analysis and Machine Intelligence}, 45\penalty0 (8):\penalty0 10100--10113, 2023.

\bibitem[Cai et~al.(2024)Cai, Huang, Lu, and You]{cai2024accelerating}
Cai, H., Huang, L., Lu, X., and You, J.
\newblock Accelerating ill-conditioned {Hankel} matrix recovery via structured {Newton}-like descent.
\newblock \emph{arXiv preprint arXiv:2406.07409}, 2024.

\bibitem[Cai et~al.(2025)Cai, Kundu, Liu, and Yin]{cai2024deeply}
Cai, H., Kundu, C., Liu, J., and Yin, W.
\newblock Deeply learned robust matrix completion for large-scale low-rank data recovery.
\newblock \emph{arXiv preprint arXiv:2501.00677}, 2025.

\bibitem[Candes \& Recht(2008)Candes and Recht]{candes2008exact}
Candes, E.~J. and Recht, B.
\newblock Exact low-rank matrix completion via convex optimization.
\newblock In \emph{2008 46th Annual Allerton Conference on Communication, Control, and Computing}, pp.\  806--812. IEEE, 2008.

\bibitem[Cand{\`e}s et~al.(2011)Cand{\`e}s, Li, Ma, and Wright]{candes2011robust}
Cand{\`e}s, E.~J., Li, X., Ma, Y., and Wright, J.
\newblock Robust principal component analysis?
\newblock \emph{Journal of the ACM (JACM)}, 58\penalty0 (3):\penalty0 1--37, 2011.

\bibitem[Charisopoulos et~al.(2021)Charisopoulos, Chen, Davis, D{\'\i}az, Ding, and Drusvyatskiy]{charisopoulos2021low}
Charisopoulos, V., Chen, Y., Davis, D., D{\'\i}az, M., Ding, L., and Drusvyatskiy, D.
\newblock Low-rank matrix recovery with composite optimization: good conditioning and rapid convergence.
\newblock \emph{Foundations of Computational Mathematics}, 21\penalty0 (6):\penalty0 1505--1593, 2021.

\bibitem[Chen et~al.(2013)Chen, Jalali, Sanghavi, and Caramanis]{chen2013low}
Chen, Y., Jalali, A., Sanghavi, S., and Caramanis, C.
\newblock Low-rank matrix recovery from errors and erasures.
\newblock \emph{IEEE Transactions on Information Theory}, 59\penalty0 (7):\penalty0 4324--4337, 2013.

\bibitem[Cheng \& Zhao(2024)Cheng and Zhao]{cheng2024accelerating}
Cheng, C. and Zhao, Z.
\newblock Accelerating gradient descent for over-parameterized asymmetric low-rank matrix sensing via preconditioning.
\newblock In \emph{IEEE International Conference on Acoustics, Speech and Signal Processing (ICASSP)}, pp.\  7705--7709, 2024.

\bibitem[Chi et~al.(2019)Chi, Lu, and Chen]{Chi_TSP_2019}
Chi, Y., Lu, Y.~M., and Chen, Y.
\newblock Nonconvex optimization meets low-rank matrix factorization: An overview.
\newblock \emph{IEEE Transactions on Signal Processing}, 67\penalty0 (20):\penalty0 5239--5269, 2019.
\newblock \doi{10.1109/TSP.2019.2937282}.

\bibitem[Davenport \& Romberg(2016)Davenport and Romberg]{davenport2016overview}
Davenport, M.~A. and Romberg, J.
\newblock An overview of low-rank matrix recovery from incomplete observations.
\newblock \emph{IEEE Journal of Selected Topics in Signal Processing}, 10\penalty0 (4):\penalty0 608--622, 2016.

\bibitem[Ding et~al.(2021)Ding, Jiang, Chen, Qu, and Zhu]{ding2021rank}
Ding, L., Jiang, L., Chen, Y., Qu, Q., and Zhu, Z.
\newblock Rank overspecified robust matrix recovery: Subgradient method and exact recovery.
\newblock \emph{Advances in Neural Information Processing Systems}, 34:\penalty0 26767--26778, 2021.

\bibitem[Giampouras et~al.(2020)Giampouras, Vidal, Rontogiannis, and Haeffele]{giampouras2020novel}
Giampouras, P., Vidal, R., Rontogiannis, A., and Haeffele, B.
\newblock A novel variational form of the schatten-$ p $ quasi-norm.
\newblock \emph{Advances in Neural Information Processing Systems}, 33:\penalty0 21453--21463, 2020.

\bibitem[Giampouras et~al.(2018)Giampouras, Rontogiannis, and Koutroumbas]{giampRPCA}
Giampouras, P.~V., Rontogiannis, A.~A., and Koutroumbas, K.~D.
\newblock Robust pca via alternating iteratively reweighted low-rank matrix factorization.
\newblock In \emph{2018 25th IEEE International Conference on Image Processing (ICIP)}, pp.\  3383--3387, 2018.
\newblock \doi{10.1109/ICIP.2018.8451212}.

\bibitem[Goffin(1977)]{goffin1977convergence}
Goffin, J.-L.
\newblock On convergence rates of subgradient optimization methods.
\newblock \emph{Mathematical programming}, 13:\penalty0 329--347, 1977.

\bibitem[Jain et~al.(2013)Jain, Netrapalli, and Sanghavi]{jain2013low}
Jain, P., Netrapalli, P., and Sanghavi, S.
\newblock Low-rank matrix completion using alternating minimization.
\newblock In \emph{Proceedings of the forty-fifth annual ACM symposium on Theory of computing}, pp.\  665--674, 2013.

\bibitem[K{\"u}mmerle \& Verdun(2021)K{\"u}mmerle and Verdun]{kummerle2021scalable}
K{\"u}mmerle, C. and Verdun, C.~M.
\newblock A scalable second order method for ill-conditioned matrix completion from few samples.
\newblock In \emph{International Conference on Machine Learning}, pp.\  5872--5883. PMLR, 2021.

\bibitem[Li et~al.(2020)Li, Zhu, Man-Cho~So, and Vidal]{li2020nonconvex}
Li, X., Zhu, Z., Man-Cho~So, A., and Vidal, R.
\newblock Nonconvex robust low-rank matrix recovery.
\newblock \emph{SIAM Journal on Optimization}, 30\penalty0 (1):\penalty0 660--686, 2020.

\bibitem[Ma \& Fattahi(2021)Ma and Fattahi]{ma2021implicit}
Ma, J. and Fattahi, S.
\newblock Implicit regularization of sub-gradient method in robust matrix recovery: Don’t be afraid of outliers.
\newblock \emph{arXiv preprint arXiv:2102.02969}, 95, 2021.

\bibitem[Netrapalli et~al.(2014)Netrapalli, UN, Sanghavi, Anandkumar, and Jain]{netrapalli2014non}
Netrapalli, P., UN, N., Sanghavi, S., Anandkumar, A., and Jain, P.
\newblock Non-convex robust pca.
\newblock \emph{Advances in neural information processing systems}, 27, 2014.

\bibitem[Nie et~al.(2012)Nie, Huang, and Ding]{nie2012low}
Nie, F., Huang, H., and Ding, C.
\newblock Low-rank matrix recovery via efficient schatten p-norm minimization.
\newblock In \emph{Proceedings of the AAAI Conference on Artificial Intelligence}, volume~26, pp.\  655--661, 2012.

\bibitem[Park et~al.(2016)Park, Kyrillidis, Bhojanapalli, Caramanis, and Sanghavi]{park2016provable}
Park, D., Kyrillidis, A., Bhojanapalli, S., Caramanis, C., and Sanghavi, S.
\newblock Provable burer-monteiro factorization for a class of norm-constrained matrix problems.
\newblock \emph{arXiv preprint arXiv:1606.01316}, 2016.

\bibitem[Park et~al.(2017)Park, Kyrillidis, Caramanis, and Sanghavi]{park2017non}
Park, D., Kyrillidis, A., Caramanis, C., and Sanghavi, S.
\newblock Non-square matrix sensing without spurious local minima via the burer-monteiro approach.
\newblock In \emph{Artificial Intelligence and Statistics}, pp.\  65--74. PMLR, 2017.

\bibitem[Recht et~al.(2010)Recht, Fazel, and Parrilo]{recht2010guaranteed}
Recht, B., Fazel, M., and Parrilo, P.~A.
\newblock Guaranteed minimum-rank solutions of linear matrix equations via nuclear norm minimization.
\newblock \emph{SIAM review}, 52\penalty0 (3):\penalty0 471--501, 2010.

\bibitem[Smith et~al.(2024)Smith, Cai, and Tasissa]{smith2024riemannian}
Smith, C., Cai, H., and Tasissa, A.
\newblock Riemannian optimization for non-convex euclidean distance geometry with global recovery guarantees.
\newblock \emph{arXiv preprint arXiv:2410.06376}, 2024.

\bibitem[Soltanolkotabi et~al.(2023)Soltanolkotabi, St{\"o}ger, and Xie]{soltanolkotabi2023implicit}
Soltanolkotabi, M., St{\"o}ger, D., and Xie, C.
\newblock Implicit balancing and regularization: Generalization and convergence guarantees for overparameterized asymmetric matrix sensing.
\newblock In \emph{The Thirty Sixth Annual Conference on Learning Theory}, pp.\  5140--5142. PMLR, 2023.

\bibitem[St{\"o}ger \& Soltanolkotabi(2021)St{\"o}ger and Soltanolkotabi]{stoger2021small}
St{\"o}ger, D. and Soltanolkotabi, M.
\newblock Small random initialization is akin to spectral learning: Optimization and generalization guarantees for overparameterized low-rank matrix reconstruction.
\newblock \emph{Advances in Neural Information Processing Systems}, 34:\penalty0 23831--23843, 2021.

\bibitem[Tong et~al.(2021{\natexlab{a}})Tong, Ma, and Chi]{tong2021accelerating}
Tong, T., Ma, C., and Chi, Y.
\newblock Accelerating ill-conditioned low-rank matrix estimation via scaled gradient descent.
\newblock \emph{Journal of Machine Learning Research}, 22\penalty0 (150):\penalty0 1--63, 2021{\natexlab{a}}.

\bibitem[Tong et~al.(2021{\natexlab{b}})Tong, Ma, and Chi]{tong2021low}
Tong, T., Ma, C., and Chi, Y.
\newblock Low-rank matrix recovery with scaled subgradient methods: Fast and robust convergence without the condition number.
\newblock \emph{IEEE Transactions on Signal Processing}, 69:\penalty0 2396--2409, 2021{\natexlab{b}}.

\bibitem[Tu et~al.(2016)Tu, Boczar, Simchowitz, Soltanolkotabi, and Recht]{tu2016low}
Tu, S., Boczar, R., Simchowitz, M., Soltanolkotabi, M., and Recht, B.
\newblock Low-rank solutions of linear matrix equations via procrustes flow.
\newblock In \emph{International Conference on Machine Learning}, pp.\  964--973. PMLR, 2016.

\bibitem[Wang et~al.(2024)Wang, Zhou, Zhu, Liu, Yuan, Yao, Yin, Jin, and Cai]{wang2024s3attention}
Wang, X., Zhou, T., Zhu, J., Liu, J., Yuan, K., Yao, T., Yin, W., Jin, R., and Cai, H.
\newblock {S$^3$Attention}: Improving long sequence attention with smoothed skeleton sketching.
\newblock \emph{IEEE Journal of Selected Topics in Signal Processing}, 18\penalty0 (6):\penalty0 985--996, 2024.

\bibitem[Wei et~al.(2016)Wei, Cai, Chan, and Leung]{wei2016guarantees}
Wei, K., Cai, J.-F., Chan, T.~F., and Leung, S.
\newblock Guarantees of riemannian optimization for low rank matrix recovery.
\newblock \emph{SIAM Journal on Matrix Analysis and Applications}, 37\penalty0 (3):\penalty0 1198--1222, 2016.

\bibitem[Xiong et~al.(2023)Xiong, Ding, and Du]{xiong2023over}
Xiong, N., Ding, L., and Du, S.~S.
\newblock How over-parameterization slows down gradient descent in matrix sensing: The curses of symmetry and initialization.
\newblock \emph{arXiv preprint arXiv:2310.01769}, 2023.

\bibitem[Xu et~al.(2023)Xu, Shen, Chi, and Ma]{xu2023power}
Xu, X., Shen, Y., Chi, Y., and Ma, C.
\newblock The power of preconditioning in overparameterized low-rank matrix sensing.
\newblock In \emph{International Conference on Machine Learning}, pp.\  38611--38654. PMLR, 2023.

\bibitem[Zhang \& Pilanci(2024)Zhang and Pilanci]{zhang2024riemannian}
Zhang, F. and Pilanci, M.
\newblock Riemannian preconditioned lo{RA} for fine-tuning foundation models.
\newblock In \emph{Forty-first International Conference on Machine Learning}, 2024.

\bibitem[Zhang et~al.(2023)Zhang, Fattahi, and Zhang]{zhang2023preconditioned}
Zhang, G., Fattahi, S., and Zhang, R.~Y.
\newblock Preconditioned gradient descent for overparameterized nonconvex burer--monteiro factorization with global optimality certification.
\newblock \emph{Journal of Machine Learning Research}, 24\penalty0 (163):\penalty0 1--55, 2023.

\bibitem[Zhang et~al.(2016)Zhang, Chi, and Liang]{zhang2016provable}
Zhang, H., Chi, Y., and Liang, Y.
\newblock Provable non-convex phase retrieval with outliers: Median truncatedwirtinger flow.
\newblock In \emph{International conference on machine learning}, pp.\  1022--1031. PMLR, 2016.

\bibitem[Zhang et~al.(2024)Zhang, Zhang, and Chiu]{zhang2024fast}
Zhang, J., Zhang, R.~Y., and Chiu, H.-M.
\newblock Fast and accurate estimation of low-rank matrices from noisy measurements via preconditioned non-convex gradient descent.
\newblock In \emph{International Conference on Artificial Intelligence and Statistics}, pp.\  3772--3780. PMLR, 2024.

\bibitem[Zhang(2021)]{zhang2021sharp}
Zhang, R.~Y.
\newblock Sharp global guarantees for nonconvex low-rank matrix recovery in the overparameterized regime.
\newblock \emph{arXiv preprint arXiv:2104.10790}, 2021.

\end{thebibliography}
\bibliographystyle{icml2025}

\newpage
\appendix
\onecolumn
\icmltitle{Supplementary Materials for \\Guarantees of a Preconditioned Subgradient Algorithm for\\ Overparameterized Asymmetric Low-rank Matrix Recovery
}
\renewcommand{\thesection}{A.\arabic{section}} 
\renewcommand{\thesubsection}{A.\arabic{section}.\arabic{subsection}} 
\vspace{-1cm}
\addtocontents{toc}{\protect\setcounter{tocdepth}{1}}
\section{Problem Formulation}

We focus on the following low-rank matrix estimation problem
\begin{equation}
\min_{\mX}~\mathcal{L}(\mY,\mX) \equiv \min_{\mL,\mR,\mL\mR^\top=\mX}~{\mathcal{L}}(\mY,\mL\mR^\top).
\end{equation}

We assume a preconditioned alternating gradient descent algorithm consisting of the following steps:
\begin{align}
\mL_{t+1} &= \mL_t - \eta_t(\mS_t \mR_t) \left(\mR_t^\top\mR_t +\lambda \mI\right)^{-1}, \\
\mR_{t+1} &= \mR_t - \eta_t(\mS_t^\top \mL_t) \left(\mL_t^\top\mL_t +\lambda \mI\right)^{-1},
\end{align}
with $\mS_t\in \partial_{\mX}{l(\mY,\mX_t)}$.

Let $\mX_{\star}$ be the ground truth matrix,  $\mF_{\star} = \begin{bmatrix}\mL_\star \\ \mR_\star \end{bmatrix} \in \mathbb{R}^{(m+n)\times d}$, where $d\geq r=\mathrm{rank}(\mX_{\star})$, $\mL_\star = \mU_\star \bm{\Sigma}^{\frac{1}{2}}_{\star}$, $\mR_\star = \mV_\star \bm{\Sigma}^{\frac{1}{2}}_{\star}$, and  $\mF = \begin{bmatrix}\mL \\ \mR \end{bmatrix} \in \mathbb{R}^{(m+n)\times d}$.
We define the following convergence metric,
\begin{equation}
\mathrm{dist}(\mF,\mF_{\star}) = \sqrt{\inf_{\mQ\in \mathrm{G}(d)}~\|(\mL\mQ - \mL_\star)(\bm{\Sigma}_{\star} + \lambda\mI)^{\frac{1}{2}}\|^2_F + \|(\mR\mQ^{-\top} - \mR_\star)(\bm{\Sigma}_{\star} + \lambda\mI)^{\frac{1}{2}}\|^2_F} .
\label{eq:conv_metric}
\end{equation}

In our analysis we focus on the Polyak's type step size, defined as follows,
\begin{align}
\label{eq:polyak}
    \eta_t = \frac{\mathcal{L}(\mL_t\mR^\top_t) - \mathcal{L}(\mL_\star\mR^\top_\star)}{\|\mS_t\mR_t(\mR^\top_t\mR_t + \lambda\mI)^{-\frac{1}{2}}\|^2_F   +\|\mS_t^\top\mL_t(\mL_t^\top\mL_t + \lambda\mI)^{-\frac{1}{2}}\|^2_F}.
\end{align}

\addtocounter{algorithm}{-1}
\begin{algorithm}
\caption{Overparameterized Preconditioned Subgradient Algorithm (OPSA)}
\label{alg:main}
\begin{algorithmic}[1]
\STATE \textbf{Input:} $\mathbf{y}\in\R^{p}$, $d$: overestimated rank, $\lambda>0$: regularization parameter, $\eta$: stepsize. 
\STATE \textbf{Initialize} $\mL_0$ and $\mR_0$, set $t=0$. 
\WHILE{!\,Stop Condition} 
 \STATE$ {\mL}_{t+1} = \mL_t - \eta_t \mS_t \mR_t \left(\mR_t^\top\mR_t +\lambda \mI\right)^{-1} $   
 \STATE
   ${\mR}_{t+1} = \mR_t - \eta_t \mS_t^\top \mL_t \left(\mL_t^\top\mL_t +\lambda \mI\right)^{-1} $ 
\STATE $t=t+1$
\ENDWHILE
\STATE \textbf{Output:} $\mX=\mL_t\mR_t^\top$.
\end{algorithmic}
\end{algorithm}
\addtocounter{algorithm}{1}
\vfill

\section{Auxiliary Lemmata}
\begin{lemma}[Theorem 1 \cite{BirKopSol75}; Theorem X.1.1, \cite{bhatia2013matrix}]
Let $\mA,\mB$ positive definite matrices. It holds,
\begin{equation}
\|\mA^{\frac{1}{2}} - \mB^{\frac{1}{2}}\|_{op}\leq \|\mA - \mB\|_{op}^{\frac{1}{2}}.
\end{equation}
\label{lem:ineq_op}
\end{lemma}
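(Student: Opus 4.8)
The plan is to reduce the claim to an elementary Rayleigh-quotient estimate that uses only positive semidefiniteness of the square roots. I would write $\mX = \mA^{\frac{1}{2}}$ and $\mY = \mB^{\frac{1}{2}}$, which are symmetric and positive definite, so that the assertion becomes $\|\mX - \mY\|_{op}^2 \leq \|\mX^2 - \mY^2\|_{op}$. Set $\delta = \|\mX - \mY\|_{op}$.

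Next I would use that $\mX - \mY$ is symmetric, so $\delta = \max\{\lambda_{\max}(\mX - \mY),\, -\lambda_{\min}(\mX - \mY)\}$ and there is a real unit vector $\vv$ with either $(\mX - \mY)\vv = \delta\vv$ or $(\mX - \mY)\vv = -\delta\vv$. Since the inequality is symmetric under swapping $\mA \leftrightarrow \mB$ (note $\|\mX^2 - \mY^2\|_{op} = \|\mY^2 - \mX^2\|_{op}$), it suffices to treat the first case, so assume $\mX\vv = \mY\vv + \delta\vv$. Using symmetry of $\mX$ and $\mY$ and expanding the square,
\[
\langle \vv, (\mX^2 - \mY^2)\vv\rangle = \|\mX\vv\|_{2}^{2} - \|\mY\vv\|_{2}^{2} = \|\mY\vv + \delta\vv\|_{2}^{2} - \|\mY\vv\|_{2}^{2} = 2\delta\,\langle \vv, \mY\vv\rangle + \delta^2 \geq \delta^2,
\]
where the final step uses $\mY = \mB^{\frac{1}{2}} \succeq \vzero$. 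Because $\mX^2 - \mY^2$ is symmetric and $\vv$ is a unit vector, its operator norm dominates this Rayleigh quotient, hence $\|\mA - \mB\|_{op} = \|\mX^2 - \mY^2\|_{op} \geq \delta^2 = \|\mA^{\frac{1}{2}} - \mB^{\frac{1}{2}}\|_{op}^2$, and taking square roots finishes the proof.

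There is no real obstacle here; the only delicate point is the sign of the extremal eigenvalue of $\mA^{\frac{1}{2}} - \mB^{\frac{1}{2}}$, which is handled by the $\mA \leftrightarrow \mB$ symmetry of the statement, after which everything is a one-line computation. An alternative would be to invoke directly the operator-Hölder property of $t \mapsto \sqrt{t}$ as stated in \cite{BirKopSol75,bhatia2013matrix}, but the self-contained argument above keeps the supplement elementary.
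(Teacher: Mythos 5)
Your proof is correct, and it is genuinely more elementary than what the paper does: the paper does not prove this lemma at all, but simply cites it to Birman--Koplienko--Solomyak and to Theorem X.1.1 of Bhatia. Those references establish a much more general statement (for any operator-monotone $f$ with $f(0)=0$ and any unitarily invariant norm, $|||f(\mA)-f(\mB)||| \leq |||\,f(|\mA-\mB|)\,|||$) via integral representations of operator-monotone functions, whereas your argument is a direct, self-contained Rayleigh-quotient computation specialized to $f(t)=\sqrt{t}$ and the operator norm. Concretely, you write $\mX=\mA^{1/2}$, $\mY=\mB^{1/2}$, pick a unit eigenvector $\vv$ of the symmetric matrix $\mX-\mY$ for its extremal eigenvalue $\pm\delta$ (WLOG $+\delta$ after swapping $\mA\leftrightarrow\mB$, which leaves both sides of the inequality invariant), expand $\langle\vv,(\mX^2-\mY^2)\vv\rangle=2\delta\langle\vv,\mY\vv\rangle+\delta^2\geq\delta^2$ using $\mY\succeq 0$, and conclude via the Rayleigh-quotient bound $\|\mX^2-\mY^2\|_{op}\geq\langle\vv,(\mX^2-\mY^2)\vv\rangle$. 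Every step is sound. The trade-off is exactly as you say: the cited theorem is far more general (all symmetric norms, all operator-monotone $f$), while your argument buys transparency and independence from the operator-monotone machinery for the one case the paper needs.
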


\begin{lemma}\label{lemma:lower_bound_dist}
Let $\|\mX - \mX_\star\|_{op}\leq \frac{1}{2}\sigma_r(\mX_\star)$, then it holds $\|\mX - \mX_\star\|_F \geq \sqrt{\frac{(\sqrt{2}-1)\sigma_r(\mX_\star)}{\sigma_r(\mX_\star)+2\lambda}}\mathrm{dist}(\mF,\mF_{\star})$.
\end{lemma}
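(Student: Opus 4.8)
\emph{Proof plan.} Throughout write $\mX=\mL\mR^\top$ and $\bm{\Lambda}=\bm{\Sigma}_\star+\lambda\mI$, so that $\mathrm{dist}^2(\mF,\mF_\star)=\inf_{\mQ\in G(d)}\|(\mL\mQ-\mL_\star)\bm{\Lambda}^{1/2}\|_F^2+\|(\mR\mQ^{-\top}-\mR_\star)\bm{\Lambda}^{1/2}\|_F^2$. By Weyl's inequality the hypothesis $\|\mX-\mX_\star\|_{op}\le\tfrac12\sigma_r(\mX_\star)$ gives $\sigma_r(\mX)\ge\tfrac12\sigma_r(\mX_\star)>0$, hence $\mathrm{rank}(\mX)\ge r$; a short compactness argument on the balanced factorizations of $\mX$ (using $\sigma_r(\mX)>0$) shows the infimum is attained at some $\mQ_\star\in G(d)$ (otherwise work with a minimizing sequence, since the bounds below are continuous in $\mQ$). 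Put $\widehat{\mL}=\mL\mQ_\star$, $\widehat{\mR}=\mR\mQ_\star^{-\top}$, $\bm{\Delta}_{\mL}=\widehat{\mL}-\mL_\star$, $\bm{\Delta}_{\mR}=\widehat{\mR}-\mR_\star$; then $\widehat{\mL}\widehat{\mR}^\top=\mX$ and $\mathrm{dist}^2(\mF,\mF_\star)=\|\bm{\Delta}_{\mL}\bm{\Lambda}^{1/2}\|_F^2+\|\bm{\Delta}_{\mR}\bm{\Lambda}^{1/2}\|_F^2$.

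Next I would extract the stationarity condition for $\mQ_\star$. Differentiating $\mQ\mapsto\|(\mL\mQ-\mL_\star)\bm{\Lambda}^{1/2}\|_F^2+\|(\mR\mQ^{-\top}-\mR_\star)\bm{\Lambda}^{1/2}\|_F^2$, using $\delta(\mQ^{-\top})=-\mQ^{-\top}(\delta\mQ)^\top\mQ^{-\top}$, and left-multiplying by $\mQ_\star^\top$ yields the balancing identity $\widehat{\mL}^\top\bm{\Delta}_{\mL}\bm{\Lambda}=\bm{\Lambda}\bm{\Delta}_{\mR}^\top\widehat{\mR}$. Substituting $\widehat{\mL}=\mL_\star+\bm{\Delta}_{\mL}$, $\widehat{\mR}=\mR_\star+\bm{\Delta}_{\mR}$ and recalling $\mL_\star^\top\mL_\star=\mR_\star^\top\mR_\star=\bm{\Sigma}_\star$, this becomes $\mL_\star^\top\bm{\Delta}_{\mL}\bm{\Lambda}-\bm{\Lambda}\bm{\Delta}_{\mR}^\top\mR_\star=\bm{\Lambda}\bm{\Delta}_{\mR}^\top\bm{\Delta}_{\mR}-\bm{\Delta}_{\mL}^\top\bm{\Delta}_{\mL}\bm{\Lambda}$; i.e.\ with $\mA=\mL_\star^\top\bm{\Delta}_{\mL}$, $\mB=\mR_\star^\top\bm{\Delta}_{\mR}$ one has $\mA\bm{\Lambda}=\bm{\Lambda}\mB^\top$ up to a quadratic remainder. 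The role of this identity is to forbid the destructive cancellation $\bm{\Delta}_{\mL}\mR_\star^\top\approx-\mL_\star\bm{\Delta}_{\mR}^\top$: modulo the remainder, $\langle\bm{\Delta}_{\mL}\mR_\star^\top,\mL_\star\bm{\Delta}_{\mR}^\top\rangle=\Tr(\mA\mB)\ge 0$, since $\Tr(\mA\mB)$ equals a squared Frobenius norm after a $\bm{\Lambda}$-congruence; in general it is bounded below by minus a controllable quadratic term.

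The third step is the lower bound on $\|\mX-\mX_\star\|_F$. From $\mX-\mX_\star=\bm{\Delta}_{\mL}\mR_\star^\top+\mL_\star\bm{\Delta}_{\mR}^\top+\bm{\Delta}_{\mL}\bm{\Delta}_{\mR}^\top$ and the isometries $\|\bm{\Delta}_{\mL}\mR_\star^\top\|_F=\|\bm{\Delta}_{\mL}\bm{\Sigma}_\star^{1/2}\|_F$ (right multiplication by $\mV_\star^\top$ preserves $\|\cdot\|_F$) and $\|\mL_\star\bm{\Delta}_{\mR}^\top\|_F=\|\bm{\Delta}_{\mR}\bm{\Sigma}_\star^{1/2}\|_F$, I would expand $\|\mX-\mX_\star\|_F^2$, plug in the sign of the cross term from Step 2, and use $\|\mX-\mX_\star\|_{op}\le\tfrac12\sigma_r(\mX_\star)$ to bound $\|\bm{\Delta}_{\mL}\bm{\Delta}_{\mR}^\top\|_F$ and the quadratic remainder relative to $\|\bm{\Delta}_{\mL}\bm{\Sigma}_\star^{1/2}\|_F^2+\|\bm{\Delta}_{\mR}\bm{\Sigma}_\star^{1/2}\|_F^2$; optimizing the resulting scalar quadratic over the admissible quadratic-to-linear ratio produces the universal constant, giving $\|\mX-\mX_\star\|_F^2\ge(\sqrt2-1)\bigl(\|\bm{\Delta}_{\mL}\bm{\Sigma}_\star^{1/2}\|_F^2+\|\bm{\Delta}_{\mR}\bm{\Sigma}_\star^{1/2}\|_F^2\bigr)$. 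Finally I convert to the $\bm{\Lambda}^{1/2}$-weighted quantities in $\mathrm{dist}$: on the top $r\times r$ block $\sigma_i\ge\tfrac{\sigma_r}{\sigma_r+2\lambda}(\sigma_i+\lambda)$ because $2\sigma_i\ge\sigma_r$; for the $d-r$ ``spurious'' columns (where $\bm{\Sigma}_\star$ vanishes while $\bm{\Lambda}$ still contributes $\lambda$) I would argue separately, using the bottom-right $(d-r)\times(d-r)$ block of the balancing identity (which forces $\widehat{\mL}^\top\widehat{\mL}$ and $\widehat{\mR}^\top\widehat{\mR}$ to agree there) together with $\|\mX-\mX_\star\|_{op}\le\tfrac12\sigma_r(\mX_\star)$, that the component of $\|\mX-\mX_\star\|_F^2$ transverse to the row/column space of $\mX_\star$ already dominates $\tfrac{\sigma_r}{\sigma_r+2\lambda}\,\lambda\sum_{i>r}\bigl(\|(\bm{\Delta}_{\mL})_i\|^2+\|(\bm{\Delta}_{\mR})_i\|^2\bigr)$. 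Adding the two blocks yields $\|\mX-\mX_\star\|_F^2\ge\tfrac{(\sqrt2-1)\sigma_r(\mX_\star)}{\sigma_r(\mX_\star)+2\lambda}\mathrm{dist}^2(\mF,\mF_\star)$, which is the claim.

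\emph{Main obstacle.} The delicate point is the treatment of the overparameterized (spurious) directions $i>r$: there $\bm{\Sigma}_\star$ is singular, so the clean weight comparison $\sigma_i\asymp\sigma_i+\lambda$ breaks down, and one cannot simply compare $\bm{\Sigma}_\star^{1/2}$- and $\bm{\Lambda}^{1/2}$-weighted norms. Showing that the regularizer's cost $\lambda\sum_{i>r}\|(\bm{\Delta}_{\mL})_i\|^2$ is genuinely absorbed by the part of $\|\mX-\mX_\star\|_F$ outside the span of $\mX_\star$ is exactly the non-trivial perturbation bound alluded to in the paper, and it is also why the hypothesis is stated in operator norm — this is what forces the spurious singular values of $\mX$ to be $O(\sigma_r(\mX_\star))$ rather than merely finite, so that the quadratic terms above can be controlled.
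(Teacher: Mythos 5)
The paper proves this lemma via a much shorter and structurally different argument. Since $(\bm{\Sigma}_\star+\lambda\mI)=\bm{\Sigma}_\star+\lambda\mI$, the $(\bm{\Sigma}_\star+\lambda\mI)^{1/2}$-weighted squared norm decomposes \emph{exactly additively} into a $\bm{\Sigma}_\star^{1/2}$-weighted part plus $\lambda$ times an unweighted part:
\begin{equation*}
\|(\mL\mQ-\mL_\star)(\bm{\Sigma}_\star+\lambda\mI)^{1/2}\|_F^2=\|(\mL\mQ-\mL_\star)\bm{\Sigma}_\star^{1/2}\|_F^2+\lambda\|\mL\mQ-\mL_\star\|_F^2,
\end{equation*}
and likewise for $\mR$. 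The paper then invokes two known estimates (Lemma~11 of \citep{tong2021accelerating} for the $\bm{\Sigma}_\star^{1/2}$-weighted part and Lemma~5.14 of \citep{tu2016low} for the unweighted part), each of which controls its summand by a multiple of $\|\mX-\mX_\star\|_F^2$, and sums. No stationarity condition, no cross-term analysis, and no case-split on the spurious directions is needed; those difficulties simply never arise because $\bm{\Sigma}_\star^{1/2}$ and $\mI$ are treated as separate weights rather than being compared pointwise to $(\bm{\Sigma}_\star+\lambda\mI)^{1/2}$.

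Your proposal takes a genuinely different route, and that route is where the trouble lies. You set out to prove the inequality from scratch by (i) extracting the first-order optimality condition for $\mQ_\star$, (ii) using it to control the sign of the cross term $\langle\bm{\Delta}_{\mL}\mR_\star^\top,\mL_\star\bm{\Delta}_{\mR}^\top\rangle$, and (iii) comparing the $\bm{\Sigma}_\star^{1/2}$ and $(\bm{\Sigma}_\star+\lambda\mI)^{1/2}$ weights, handling the $i>r$ columns separately. Two of these steps are left at the level of intent rather than proof. In step (ii), you claim the balancing identity $\mA\bm{\Lambda}=\bm{\Lambda}\mB^\top$ (``up to a quadratic remainder'') forces $\Tr(\mA\mB)\ge 0$ ``after a $\bm{\Lambda}$-congruence''; but $\mA\bm{\Lambda}=\bm{\Lambda}\mB^\top$ only makes $\Tr(\mA\mB)=\Tr(\mA\bm{\Lambda}\mB\bm{\Lambda}^{-1})$, which is not a squared norm in general, and you never bound the quadratic remainder. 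In step (iii) — which you yourself identify as the crux — the argument is entirely schematic: you assert that the transverse part of $\|\mX-\mX_\star\|_F^2$ dominates $\lambda\sum_{i>r}(\|(\bm{\Delta}_\mL)_i\|^2+\|(\bm{\Delta}_\mR)_i\|^2)$, but no inequality is derived, and it is not obvious that one can be; the transverse part of $\mX-\mX_\star$ involves the spurious columns of $\mL,\mR$ multiplied against the \emph{full} factors, not just against each other, so it is not clear that $\lambda\|(\bm{\Delta}_\mL)_{i>r}\|^2$ is absorbed at the stated rate.

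The underlying issue is that you are trying to prove a pointwise comparison between $\sigma_i$-weights and $(\sigma_i+\lambda)$-weights, which genuinely fails when $\sigma_i=0$. The additive decomposition above dissolves that problem rather than confronting it: the weighted block never needs to ``see'' the spurious columns (they are annihilated by $\bm{\Sigma}_\star^{1/2}$), and the $\lambda$-block is controlled entirely by the classical unweighted perturbation lemma. I would suggest rewriting along those lines. If you do want to pursue the first-principles route, you would at minimum need to (a) turn the stationarity identity into an explicit quantitative bound on $\Tr(\mA\mB)$ with controlled remainder, and (b) produce a concrete inequality for the spurious block, most likely by projecting $\mX-\mX_\star$ onto the complement of $\mathrm{col}(\mU_\star)\otimes\mathrm{col}(\mV_\star)$ and estimating the Frobenius norm there — neither of which is currently in the proposal.
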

{\it  Proof:} 
Let us denote $\mQ_\star$ as the optimal alignment matrix for $\mathrm{dist}(\mF,\mF_\star)$.  We will have,
\begin{equation*}
\begin{split}
\mathrm{dist}^2(\mF,\mF_{\star})  &= \|(\mL\mQ_\star - \mL_\star)(\bm{\Sigma}_{\star} + \lambda\mI)^{\frac{1}{2}}\|^2_F + \|(\mR\mQ^{-\top}_\star - \mR_\star)(\bm{\Sigma}_{\star} + \lambda\mI)^{\frac{1}{2}}\|^2_F \\
&= \mathrm{tr}\{(\bm{\Sigma}_{\star} + \lambda\mI)^{\frac{1}{2}}  (\mL\mQ_\star - \mL_\star)^\top (\mL\mQ_\star - \mL_\star)(\bm{\Sigma}_{\star} + \lambda\mI)^{\frac{1}{2}} \} \\
&\quad~+ \mathrm{tr}\{(\bm{\Sigma}_{\star} + \lambda\mI)^{\frac{1}{2}}  (\mR\mQ^{-\top}_\star - \mR_\star)^\top (\mR\mQ^{-\top}_\star - \mR_\star)(\bm{\Sigma}_{\star} + \lambda\mI)^{\frac{1}{2}} \} \\
&=\mathrm{tr}\{(\mL\mQ_\star - \mL_\star)^\top (\mL\mQ_\star - \mL_\star)(\bm{\Sigma}_{\star} + \lambda\mI)\} + \mathrm{tr}\{(\mR\mQ^{-\top}_\star - \mL_\star)^\top (\mR\mQ^{-\top}_\star - \mR_\star)(\bm{\Sigma}_{\star} + \lambda\mI)\}\\ 
&=  \|(\mL\mQ_\star - \mL_\star)\bm{\Sigma}_{\star}^{\frac{1}{2}}\|^2_F + \|(\mR\mQ^{-\top}_\star - \mR_\star)\bm{\Sigma}_{\star}^{\frac{1}{2}}\|^2_F + \lambda \left(\|(\mL\mQ_\star - \mL_\star)\|^2_F + (\mR\mQ^{-\top}_\star - \mR_\star)\|^2_F\right).
\end{split}
\end{equation*}

By using the definition of the distance and Lemma 11 in \citep{tong2021accelerating}, and Lemma 5.14 in \citep{tu2016low}, we have

\begin{equation}
\begin{split}
&\mathrm{dist}^2(\mF,\mF_{\star}) \leq \frac{1}{\sqrt{2}-1}\|\mX - \mX_\star\|^2_F + \frac{2\lambda}{(\sqrt{2}-1)\sigma_r(\mX_\star)}\|\mX - \mX_\star\|^2_F.
\end{split}
\end{equation}
Hence, we have
\begin{equation}
\mathrm{dist}^2(\mF,\mF_{\star}) \leq (\frac{1}{\sqrt{2}-1} + \frac{2\lambda}{(\sqrt{2}-1)\sigma_r(\mX_\star)}) \|\mX - \mX_\star\|^2_F\\
= \frac{\sigma_r(\mX_\star)+ 2\lambda}{(\sqrt{2}-1)\sigma_r(\mX_\star)}\|\mX - \mX_\star\|^2_F,
\end{equation}
which leads us to inequality
\begin{equation}
\|\mX - \mX_\star\|_F \geq \sqrt{\frac{(\sqrt{2}-1)\sigma_r(\mX_\star)}{\sigma_r(\mX_\star)+2\lambda}}\mathrm{dist}(\mF,\mF_{\star}).
\end{equation}

\begin{lemma}
The proposed distance metric given in \eqref{eq:conv_metric}, is an upper bound of the distance $\|\mL\mR^\top - \mX_\star\|_F$. Namely, assuming that $\mathrm{dist}(\mF,\mF_\star)\leq \lambda \epsilon$, it holds 
\begin{equation}
\|\mL\mR^\top - \mX_\star\|_F \leq \left(1+\frac{\epsilon}{2}\right)\sqrt{2}\mathrm{dist}(\mF,\mF_\star).
\end{equation}
\label{lem:up_bound_metric}
\end{lemma}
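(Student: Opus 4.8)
The goal is to upper-bound $\|\mL\mR^\top - \mX_\star\|_F$ by a constant times $\mathrm{dist}(\mF,\mF_\star)$, under the smallness assumption $\mathrm{dist}(\mF,\mF_\star)\le \lambda\epsilon$. The natural first step is to fix the optimal alignment matrix $\mQ_\star$ achieving the infimum in the definition of $\mathrm{dist}$, and write $\widetilde{\mL} = \mL\mQ_\star$, $\widetilde{\mR} = \mR\mQ_\star^{-\top}$, so that $\widetilde{\mL}\widetilde{\mR}^\top = \mL\mR^\top$. Then decompose the error as a telescoping sum,
\begin{equation*}
\widetilde{\mL}\widetilde{\mR}^\top - \mX_\star = (\widetilde{\mL} - \mL_\star)\widetilde{\mR}^\top + \mL_\star(\widetilde{\mR} - \mR_\star)^\top,
\end{equation*}
and apply the triangle inequality together with $\|\mA\mB^\top\|_F \le \|\mA\|_{op}\|\mB\|_F$ (or its transpose version) to each term. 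This reduces the task to controlling $\|\widetilde{\mL}-\mL_\star\|_F$, $\|\widetilde{\mR}-\mR_\star\|_F$ in terms of $\mathrm{dist}$, and controlling the operator norms $\|\widetilde{\mR}\|_{op}$, $\|\mL_\star\|_{op}$.

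\textbf{Key steps.} First, since $\bm{\Sigma}_\star + \lambda\mI \succeq \lambda\mI$, we immediately get
\begin{equation*}
\|(\widetilde{\mL} - \mL_\star)(\bm{\Sigma}_\star + \lambda\mI)^{\frac12}\|_F^2 \ge \lambda\|\widetilde{\mL} - \mL_\star\|_F^2,
\end{equation*}
and likewise for the $\mR$ term, so $\|\widetilde{\mL}-\mL_\star\|_F^2 + \|\widetilde{\mR}-\mR_\star\|_F^2 \le \lambda^{-1}\mathrm{dist}^2(\mF,\mF_\star)$; hence each of $\|\widetilde{\mL}-\mL_\star\|_F$ and $\|\widetilde{\mR}-\mR_\star\|_F$ is at most $\lambda^{-1/2}\mathrm{dist}(\mF,\mF_\star)$, and the assumption $\mathrm{dist}\le\lambda\epsilon$ gives $\|\widetilde{\mL}-\mL_\star\|_F, \|\widetilde{\mR}-\mR_\star\|_F \le \sqrt{\lambda}\,\epsilon$. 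Second, for the operator norm of $\widetilde{\mR}$, write $\|\widetilde{\mR}\|_{op} \le \|\mR_\star\|_{op} + \|\widetilde{\mR} - \mR_\star\|_{op} \le \|\mR_\star\|_{op} + \|\widetilde{\mR}-\mR_\star\|_F$. Since $\mR_\star = \mV_\star\bm{\Sigma}_\star^{1/2}$ we have $\|\mR_\star\|_{op} = \|\mX_\star\|_{op}^{1/2}$, and we should express the perturbation bound relative to this quantity. The point of the hypothesis $\epsilon = 10^{-4}/(\chi\|\mX_\star\|_{op}^{1/2})$ (and $\lambda$ small) is that $\|\widetilde{\mR}-\mR_\star\|_F$ is a tiny fraction of $\|\mX_\star\|_{op}^{1/2}$, so $\|\widetilde{\mR}\|_{op} \le (1 + o(1))\|\mX_\star\|_{op}^{1/2}$ — but for the clean bound $(1+\epsilon/2)$ stated in the lemma, I expect one actually bounds the two error terms slightly differently: write the full error term $\|(\widetilde{\mL}-\mL_\star)\widetilde{\mR}^\top\|_F \le \|(\widetilde{\mL}-\mL_\star)(\bm{\Sigma}_\star+\lambda\mI)^{1/2}\|_F \cdot \|(\bm{\Sigma}_\star+\lambda\mI)^{-1/2}\widetilde{\mR}^\top\|_{op}$, and show $\|(\bm{\Sigma}_\star+\lambda\mI)^{-1/2}\widetilde{\mR}^\top\|_{op} \le 1 + \epsilon/2$ using that $(\bm{\Sigma}_\star+\lambda\mI)^{-1/2}\mR_\star^\top$ has operator norm $\le 1$ (since $\mR_\star^\top = \bm{\Sigma}_\star^{1/2}\mV_\star^\top$ and $\bm{\Sigma}_\star^{1/2}(\bm{\Sigma}_\star+\lambda\mI)^{-1/2}$ has norm $<1$) plus a perturbation of size $\|(\bm{\Sigma}_\star+\lambda\mI)^{-1/2}(\widetilde{\mR}-\mR_\star)^\top\|_{op} \le \lambda^{-1/2}\|\widetilde{\mR}-\mR_\star\|_F \le \epsilon/2$ (using $\lambda = 1/20$ and the size of $\mathrm{dist}$, together with $\sigma_r(\mX_\star)=1$). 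Summing the two symmetric terms and the $\sqrt{2}$ from combining two Frobenius-norm contributions via Cauchy–Schwarz yields the claimed $(1+\epsilon/2)\sqrt{2}\,\mathrm{dist}(\mF,\mF_\star)$.

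\textbf{Main obstacle.} The delicate part is pinning down exactly where the constant $1+\epsilon/2$ comes from and ensuring the perturbation estimate on $\|(\bm{\Sigma}_\star+\lambda\mI)^{-1/2}\widetilde{\mR}^\top\|_{op}$ (and its $\mL$-analogue) is genuinely bounded by $1 + \epsilon/2$ rather than something like $1 + C\epsilon$ for a larger constant — this requires carefully tracking how the hypothesis $\mathrm{dist}(\mF,\mF_\star)\le\lambda\epsilon$, the choice $\lambda = 1/20$, and the normalization $\sigma_r(\mX_\star)=1$ interact, and possibly invoking Lemma~\ref{lem:ineq_op} to handle the square-root matrix factors cleanly. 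The combinatorial bookkeeping of which norm ($F$ vs.\ $op$) goes on which factor, and keeping the $\sqrt2$ exact, is where errors are most likely to creep in; everything else is a routine triangle-inequality argument.
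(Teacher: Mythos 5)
Your overall strategy is right — telescoping decomposition of $\mL\mR^\top-\mX_\star$, inserting $(\bm{\Sigma}_\star+\lambda\mI)^{\pm 1/2}$ to route one factor through the Frobenius norm and the other through the operator norm, and using the smallness of $\mathrm{dist}$ to control the operator-norm factor — but two details are off, and they are precisely the ones that produce the $1/2$.

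First, the perturbation estimate. You claim $\|(\bm{\Sigma}_\star+\lambda\mI)^{-1/2}(\widetilde\mR-\mR_\star)^\top\|_{op}\le\epsilon/2$, but chasing through your own chain gives only $\epsilon$: from $\mathrm{dist}\le\lambda\epsilon$ you get $\|\widetilde\mR-\mR_\star\|_F\le\lambda^{-1/2}\mathrm{dist}\le\sqrt{\lambda}\,\epsilon$, and then $\lambda^{-1/2}\|\widetilde\mR-\mR_\star\|_F\le\epsilon$, not $\epsilon/2$. (The paper in fact derives the sharper operator-norm bound directly — using $\|\mA\mB\|_F\ge\|\mA\|_{op}\sigma_d(\mB)$ and $\sigma_d(\bm{\Sigma}_\star+\lambda\mI)=\lambda$ — but that route also lands on $\epsilon$, never $\epsilon/2$.) So your asymmetric two-term split
$(\widetilde\mL-\mL_\star)\widetilde\mR^\top + \mL_\star(\widetilde\mR-\mR_\star)^\top$
yields a $(1+\epsilon)$ factor on the first summand and a factor $1$ on the second; after Cauchy–Schwarz this gives $\sqrt{(1+\epsilon)^2+1}\,\mathrm{dist}$, which is strictly larger than $(1+\epsilon/2)\sqrt{2}\,\mathrm{dist}$. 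You cannot reach the stated constant this way.

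Second, where the $1/2$ actually comes from. The paper uses the \emph{three}-term decomposition $\Delta_\mL\mR_\star^\top + \mL_\star\Delta_\mR^\top + \Delta_\mL\Delta_\mR^\top$. The two linear terms each lose a factor $1$ (since $\|\Delta_\mL\mR_\star^\top\|_F\le\|\Delta_\mL(\bm{\Sigma}_\star+\lambda\mI)^{1/2}\|_F$ and similarly for the $\mR$-term). The quadratic term $\Delta_\mL\Delta_\mR^\top$ is then split \emph{symmetrically}:
\begin{equation*}
\Delta_\mL\Delta_\mR^\top = \tfrac12\,\Delta_\mL(\bm{\Sigma}_\star+\lambda\mI)^{1/2}(\bm{\Sigma}_\star+\lambda\mI)^{-1/2}\Delta_\mR^\top + \tfrac12\,\Delta_\mL(\bm{\Sigma}_\star+\lambda\mI)^{-1/2}(\bm{\Sigma}_\star+\lambda\mI)^{1/2}\Delta_\mR^\top,
\end{equation*}
and each half contributes $\frac12\cdot\epsilon$ times one of $\|\Delta_\mL(\bm{\Sigma}_\star+\lambda\mI)^{1/2}\|_F$, $\|\Delta_\mR(\bm{\Sigma}_\star+\lambda\mI)^{1/2}\|_F$. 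This is what turns $\epsilon$ into $\epsilon/2$ and keeps the two Frobenius terms on an equal footing, so that the $\sqrt{2}$ from the final Cauchy–Schwarz is exact. The fix to your argument is therefore structural, not cosmetic: keep the quadratic term explicit (don't fold it into $\widetilde\mR$) and halve it symmetrically before applying the operator/Frobenius split.
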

{\it Proof:} We use the following distance metric,
\begin{equation}
\mathrm{dist}(\mF,\mF_{\star}) = \sqrt{\inf_{\mQ\in \mathrm{G}(d)}~\|(\mL\mQ - \mL_\star)(\bm{\Sigma}_{\star} + \lambda\mI)^{\frac{1}{2}}\|^2_F + \|(\mR\mQ^{-\top} - \mR_\star)(\bm{\Sigma}_{\star} + \lambda\mI)^{\frac{1}{2}}\|^2_F}
\label{eq:distance}
\end{equation}
and we assume that $\mathrm{dist}(\mF_t,\mF_{\star}) \leq \lambda \epsilon$.

By using the following known norm inequalities
\begin{equation}
\|\mA\mB\|_F \geq \|\mA\|_F \sigma_r(\mB) \geq \|\mA\|_{op} \sigma_r(\mB),
\end{equation}
we can get
\begin{equation}
\|(\mL - \mL_\star)(\bm{\Sigma}_{\star} + \lambda\mI)^{-\frac{1}{2}}\|^2_{op}\lambda^2 + \|(\mR - \mR_\star)(\bm{\Sigma}_{\star} + \lambda\mI)^{-\frac{1}{2}})\|^2_{op}\lambda^2 \leq \lambda^2 \epsilon^2, 
\end{equation}
which implies
\begin{equation}
\max\{\|(\mL - \mL_\star)(\bm{\Sigma}_{\star} + \lambda\mI)^{-\frac{1}{2}}\|_{op},\|(\mR - \mR_\star)(\bm{\Sigma}_{\star} + \lambda\mI)^{-\frac{1}{2}})\|_{op} \} \leq \epsilon.
\end{equation}
Next, we show the relation between $\|{\mL}_t{\mR}_t^\top  - \mX_\star\|_F $ and $\mathrm{dist}(\mF_t,\mF_\star)$,
\begin{align*}
&\quad~\|{\mL}_t{\mR}_t^\top  - \mX_\star\|_F  = \|\mL\mR^\top  - \mX_\star\|_F  = \|\Delta_\mL\mR_\star^\top + \mL_\star\Delta_\mR^\top + \Delta_\mL\Delta_\mR^\top\|_F \cr
& \leq \|\Delta_\mL\mR_\star^\top\|_F + \|\mL_\star\Delta^\top_\mR\|_F + \|\Delta_\mL\Delta^\top_\mR\| = \|\Delta_\mL\mR_\star^\top\|_F + \|\mL_\star\Delta^\top_\mR\|_F  \cr
&\quad~+ \|\frac{1}{2}\Delta_\mL\left(\bm{\Sigma}_{\star} + \lambda\mI\right)^\frac{1}{2}\left(\bm{\Sigma}_{\star} + \lambda\mI\right)^{-\frac{1}{2}}\Delta^\top_\mR  + \frac{1}{2}\Delta_\mL\left(\bm{\Sigma}_{\star} + \lambda\mI\right)^\frac{1}{2}\left(\bm{\Sigma}_{\star} + \lambda\mI\right)^{-\frac{1}{2}}\Delta^\top_\mR\|_F \cr
& \leq  \|\Delta_\mL(\bm{\Sigma}_{\star} + \lambda\mI)^\frac{1}{2}\|_F + \|\Delta_\mR(\bm{\Sigma}_{\star} + \lambda\mI)^\frac{1}{2}I\|_F + \frac{1}{2}\|\Delta_\mL\left(\bm{\Sigma}_{\star} + \lambda\mI\right)^\frac{1}{2}\|_F\|\Delta_\mR\left(\bm{\Sigma}_{\star} + \lambda\mI)^{-\frac{1}{2}}\right)\|_{op}  \cr
&\quad~ + \frac{1}{2}\|\Delta_\mL\left(\bm{\Sigma}_{\star} + \lambda\mI\right)^{-\frac{1}{2}}\|_{op}\|\Delta_\mR\left(\bm{\Sigma}_{\star} + \lambda\mI)^{\frac{1}{2}}\right)\|_{F} \cr
& \leq \left(1 +  \frac{1}{2}\max\{\|\Delta_\mR\left(\bm{\Sigma}_{\star} + \lambda\mI)^{-\frac{1}{2}}\right)\|_{op},\|\Delta_\mL\left(\bm{\Sigma}_{\star} + \lambda\mI\right)^{-\frac{1}{2}}\|_{op}\}\right)  \left( \|\Delta_\mL\left(\bm{\Sigma}_{\star} + \lambda\mI\right)^\frac{1}{2}\|_F + \|\Delta_\mR\left(\bm{\Sigma}_{\star}+\lambda\mI\right)^\frac{1}{2}\|_F \right) \cr
& \leq \left(1 + \frac{\epsilon}{2}\right)\sqrt{2}\mathrm{dist}\left(\mF,\mF_\star\right) \leq \left(1 + \frac{\epsilon}{2}\right)\sqrt{2}\lambda \epsilon,
\end{align*}
where he have used  the initialization condition, the fact that 
\begin{equation}
\max\{\|\Delta_\mR\left(\bm{\Sigma}_{\star} + \lambda\mI)^{-\frac{1}{2}}\right)\|_{op},\|\Delta_\mL\left(\bm{\Sigma}_{\star} + \lambda\mI\right)^{-\frac{1}{2}}\|_{op}\}\leq \epsilon
\end{equation}
and the inequality 
\begin{equation}
\begin{split}
&~\|\Delta_\mL\left(\bm{\Sigma}_{\star} + \lambda\mI\right)^\frac{1}{2}\|_F + \|\Delta_\mR\left(\bm{\Sigma}_{\star}+\lambda\mI\right)^\frac{1}{2}\|_F \\
\leq &~\sqrt{2} \sqrt{\|\Delta_\mL\left(\bm{\Sigma}_{\star} + \lambda\mI\right)^\frac{1}{2}\|^2_F + \|\Delta_\mR\left(\bm{\Sigma}_{\star}+\lambda\mI\right)^\frac{1}{2}\|^2_F}.
\end{split}
\end{equation}
This finishes the proof.
\qed

\begin{lemma}
\label{lem:bound_1}
Let us assume $\mathrm{dist}(\mF,\mF_\star) \leq \epsilon \lambda$, where $\lambda = \frac{\|\mX_\star\|_{op}}{\bar{\lambda}}$. Let  $\sigma_r(\mX_\star)=1$ and denote $\Delta_{\lambda} \mR = (\mR^\top\mR + \lambda\mI)^{1/2} - (\mR^\top_\star\mR_\star + \lambda\mI)^{1/2}$, $\Delta_{\lambda} \mL = (\mL^\top\mL + \lambda\mI)^{1/2} - (\mL^\top_\star\mL_\star + \lambda\mI)^{1/2}$, then it holds 
\begin{equation}
\begin{split}
\|\left(\bm{\Sigma}_{\star} + \lambda \mI\right)^{-\frac{1}{2}}\Delta_{\lambda}\mL\|_{op}\leq \sqrt{\epsilon}(\sqrt{\epsilon} + \sqrt{2}\bar{\lambda}^{\frac{1}{4}}), \\
\|\left(\bm{\Sigma}_{\star} + \lambda \mI\right)^{-\frac{1}{2}}\Delta_{\lambda}\mR\|_{op}\leq \sqrt{\epsilon}(\sqrt{\epsilon} + \sqrt{2}\bar{\lambda}^{\frac{1}{4}}).
\end{split}
\end{equation}
\end{lemma}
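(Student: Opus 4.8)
The plan is to reduce the desired bound to a one–dimensional-looking perturbation estimate for matrix square roots and then invoke Lemma~\ref{lem:ineq_op}. First I would use that $\mU_\star,\mV_\star$ have orthonormal columns, so $\mL_\star^\top\mL_\star=\mR_\star^\top\mR_\star=\bm{\Sigma}_\star$; hence the two baseline matrices coincide with $(\bm{\Sigma}_\star+\lambda\mI)^{1/2}$ and
\[\Delta_\lambda\mL=(\mL^\top\mL+\lambda\mI)^{1/2}-(\bm{\Sigma}_\star+\lambda\mI)^{1/2},\qquad \Delta_\lambda\mR=(\mR^\top\mR+\lambda\mI)^{1/2}-(\bm{\Sigma}_\star+\lambda\mI)^{1/2}.\]
Writing $\mA:=\mL^\top\mL+\lambda\mI$ and $\mB:=\bm{\Sigma}_\star+\lambda\mI$, both are positive definite, $\mA,\mB\succeq\lambda\mI$, and $\|(\bm{\Sigma}_\star+\lambda\mI)^{-1/2}\|_{op}\le\lambda^{-1/2}$ (with equality when $d>r$, since then $\sigma_d(\mX_\star)=0$). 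Therefore, by Lemma~\ref{lem:ineq_op},
\[\|(\bm{\Sigma}_\star+\lambda\mI)^{-1/2}\Delta_\lambda\mL\|_{op}\le\lambda^{-1/2}\|\mA^{1/2}-\mB^{1/2}\|_{op}\le\lambda^{-1/2}\|\mA-\mB\|_{op}^{1/2},\]
and it remains to bound $\|\mA-\mB\|_{op}=\|\mL^\top\mL-\mL_\star^\top\mL_\star\|_{op}$.

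The crucial step is to extract \emph{sharp} operator-norm control of the factor error from the distance hypothesis. Let $\mQ_\star$ be the optimal alignment matrix in \eqref{eq:conv_metric} and, as in the main proof, work with the aligned factors, setting $\Delta_\mL:=\mL-\mL_\star$ and $\Delta_\mR:=\mR-\mR_\star$. Then $\mathrm{dist}(\mF,\mF_\star)\le\lambda\epsilon$ gives, term by term in \eqref{eq:conv_metric}, $\|\Delta_\mL(\bm{\Sigma}_\star+\lambda\mI)^{1/2}\|_F\le\lambda\epsilon$. Peeling off $(\bm{\Sigma}_\star+\lambda\mI)^{-1/2}$ and using $\|\cdot\|_{op}\le\|\cdot\|_F$,
\[\|\Delta_\mL\|_{op}\le\|(\bm{\Sigma}_\star+\lambda\mI)^{-1/2}\|_{op}\,\|\Delta_\mL(\bm{\Sigma}_\star+\lambda\mI)^{1/2}\|_F\le\lambda^{-1/2}\cdot\lambda\epsilon=\sqrt{\lambda}\,\epsilon,\]
and identically $\|\Delta_\mR\|_{op}\le\sqrt{\lambda}\,\epsilon$. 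It is essential to route this through the \emph{Frobenius} part of the distance rather than the operator-norm consequence $\|\Delta_\mL(\bm{\Sigma}_\star+\lambda\mI)^{-1/2}\|_{op}\le\epsilon$ used in Lemma~\ref{lem:up_bound_metric}: the latter only yields $\|\Delta_\mL\|_{op}\le\epsilon(\|\mX_\star\|_{op}+\lambda)^{1/2}$, which would double the exponent of $\bar{\lambda}$ in the final bound from $\tfrac14$ to $\tfrac12$.

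With this in hand, expanding $\mL^\top\mL-\mL_\star^\top\mL_\star=\mL_\star^\top\Delta_\mL+\Delta_\mL^\top\mL_\star+\Delta_\mL^\top\Delta_\mL$ and using $\|\mL_\star\|_{op}=\|\bm{\Sigma}_\star^{1/2}\|_{op}=\|\mX_\star\|_{op}^{1/2}$ gives
\[\|\mA-\mB\|_{op}\le 2\|\mL_\star\|_{op}\|\Delta_\mL\|_{op}+\|\Delta_\mL\|_{op}^2\le 2\epsilon\sqrt{\|\mX_\star\|_{op}\,\lambda}+\epsilon^2\lambda.\]
Plugging into $\lambda^{-1/2}\|\mA-\mB\|_{op}^{1/2}$, using $\sqrt{a+b}\le\sqrt a+\sqrt b$, and substituting $\lambda=\|\mX_\star\|_{op}/\bar{\lambda}$,
\[\|(\bm{\Sigma}_\star+\lambda\mI)^{-1/2}\Delta_\lambda\mL\|_{op}\le\sqrt{2\epsilon}\,\Bigl(\tfrac{\|\mX_\star\|_{op}}{\lambda}\Bigr)^{1/4}+\epsilon=\sqrt{\epsilon}\bigl(\sqrt{2}\,\bar{\lambda}^{1/4}+\sqrt{\epsilon}\bigr),\]
which is the claim; the estimate for $\Delta_\lambda\mR$ is verbatim with $\mR_\star$ (and $\|\mR_\star\|_{op}=\|\mX_\star\|_{op}^{1/2}$) in place of $\mL_\star$. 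I expect the main obstacle to be exactly the step flagged above—obtaining $\|\Delta_\mL\|_{op}\le\sqrt{\lambda}\,\epsilon$ from the correct component of the distance metric—together with the routine but necessary bookkeeping that it is the aligned factors, not $\mL,\mR$ themselves, that enter $\Delta_\lambda\mL$ and $\Delta_\lambda\mR$.
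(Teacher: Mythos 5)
Your proof is correct and reaches the same final bound as the paper's, but it takes a cleaner route. Both arguments hinge on the Birman--Koplienko--Solomyak square-root perturbation inequality (Lemma~\ref{lem:ineq_op}), the operator-norm control $\|\mL-\mL_\star\|_{op}\leq\sqrt{\lambda}\,\epsilon$ extracted from the distance hypothesis, and the expansion $\mL^\top\mL-\mL_\star^\top\mL_\star = \mL_\star^\top\Delta_\mL + \Delta_\mL^\top\mL_\star + \Delta_\mL^\top\Delta_\mL$. The key difference is where the factor $(\bm{\Sigma}_\star+\lambda\mI)^{-1}$ enters. You apply Lemma~\ref{lem:ineq_op} to the genuinely symmetric positive-definite pair $\mA=\mL^\top\mL+\lambda\mI$ and $\mB=\bm{\Sigma}_\star+\lambda\mI$, and then peel off $(\bm{\Sigma}_\star+\lambda\mI)^{-1/2}$ by submultiplicativity, using $\|(\bm{\Sigma}_\star+\lambda\mI)^{-1/2}\|_{op}\le\lambda^{-1/2}$. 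The paper instead absorbs $(\bm{\Sigma}_\star+\lambda\mI)^{-1}$ into the arguments, setting $\mA=(\mL^\top\mL+\lambda\mI)(\bm{\Sigma}_\star+\lambda\mI)^{-1}$, $\mB=(\mL_\star^\top\mL_\star+\lambda\mI)(\bm{\Sigma}_\star+\lambda\mI)^{-1}$, asserts $(\bm{\Sigma}_\star+\lambda\mI)^{-1/2}\Delta_\lambda\mL=\mA^{1/2}-\mB^{1/2}$, and then invokes Lemma~\ref{lem:ineq_op} for these $\mA,\mB$. This is delicate: those products are not symmetric (the square root is therefore not the standard PSD square root, and the claimed equality fails in general since $(\bm{\Sigma}_\star+\lambda\mI)^{-1/2}$ and $(\mL^\top\mL+\lambda\mI)^{1/2}$ need not commute), while Lemma~\ref{lem:ineq_op} as stated requires positive definite arguments. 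Your route sidesteps this issue entirely at no cost in the constants, since $\|(\bm{\Sigma}_\star+\lambda\mI)^{-1/2}\|_{op}=\|(\bm{\Sigma}_\star+\lambda\mI)^{-1}\|_{op}^{1/2}$. Your observations about deriving $\|\Delta_\mL\|_{op}\leq\sqrt{\lambda}\,\epsilon$ via the Frobenius form of the distance and about the implicit alignment of $\mL,\mR$ both match what the paper does in the surrounding proof of Theorem~\ref{theo:main}.
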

{\it Proof:}
We have
\begin{equation}
   \|(\bm{\Sigma}_{\star}+\lambda\mI)^{-\frac{1}{2}} \Delta_{\lambda}\mL \|_{op} = \| \left((\mL^\top\mL + \lambda\mI)(\bm{\Sigma}_{\star}+\lambda\mI)^{-1}\right)^{\frac{1}{2}} - \left((\mL_\star^\top\mL_\star + \lambda\mI)(\bm{\Sigma}_{\star}+\lambda\mI)^{-1}\right)^{\frac{1}{2}}\|_{op}.
\end{equation}

Let us denote $\mA = (\mL^\top\mL + \lambda\mI)(\bm{\Sigma}_{\star}+\lambda\mI)^{-1}$ and $\mB = (\mL_\star^\top\mL_\star + \lambda\mI)(\bm{\Sigma}_{\star}+\lambda\mI)^{-1} $.  From Lemma \ref{lem:ineq_op}, we have that $\|\mA^{\frac{1}{2}} - \mB^{\frac{1}{2}}\|_{op} \leq \||\mA - \mB|\|^{\frac{1}{2}}_{op}$.  

Hence,
\begin{equation*}
\begin{split}
&\quad~\| \left((\mL^\top\mL + \lambda\mI)(\bm{\Sigma}_{\star}+\lambda\mI)^{-1}\right)^{\frac{1}{2}} - \left((\mL_\star^\top\mL_\star + \lambda\mI)(\bm{\Sigma}_{\star}+\lambda\mI)^{-1}\right)^{\frac{1}{2}}\|_{op} \\ 
&\leq \| (\mL^\top\mL + \lambda\mI)(\bm{\Sigma}_{\star}+\lambda\mI)^{-1} - (\mL_\star^\top\mL_\star + \lambda\mI)(\bm{\Sigma}_{\star}+\lambda\mI)^{-1}\|^{\frac{1}{2}}_{op} = \| \left(\mL^\top\mL - \mL_\star^\top\mL_\star\right)(\bm{\Sigma}_{\star}+\lambda\mI)^{-1}\|^{\frac{1}{2}}_{op}
\\
& =\|\left(\mL^\top\mL-  \mL^\top\mL_\star +  \mL^\top\mL_\star  - \mL^\top_\star\mL_\star\right)(\bm{\Sigma}_{\star}+\lambda\mI)^{-1}\|^{\frac{1}{2}}_{op} = \|\left(\mL^\top (\mL- \mL_\star) +  (\mL - \mL_\star)^\top\mL_\star\right)(\bm{\Sigma}_{\star}+\lambda\mI)^{-1}\|^{\frac{1}{2}}_{op} \nonumber 
\\
&   = \|\left((\mL - \mL_\star + \mL_\star)^\top (\mL- \mL_\star) +  (\mL - \mL_\star)^\top\mL_\star\right)(\bm{\Sigma}_{\star}+\lambda\mI)^{-1}\|^{\frac{1}{2}}_{op}
\nonumber \\ 
& = \|\left((\mL - \mL_\star)^\top (\mL- \mL_\star)  + \mL_\star^\top (\mL- \mL_\star) + (\mL - \mL_\star)^\top\mL_\star\right)(\bm{\Sigma}_{\star}+\lambda\mI)^{-1}\|^{\frac{1}{2}}_{op} \\
& \leq \|\left((\mL - \mL_\star)^\top (\mL- \mL_\star)  + \mL_\star^\top (\mL- \mL_\star) + (\mL - \mL_\star)^\top\mL_\star\right)\|^{\frac{1}{2}}_{op} \|(\bm{\Sigma}_{\star}+\lambda\mI)^{-1}\|^{\frac{1}{2}}_{op}
\nonumber \\ 
& \leq \frac{1}{\lambda^{\frac{1}{2}}}\left(\|\mL - \mL_\star\|_{op}  + \sqrt{2}\|\mL_\star\|^{\frac{1}{2}}_{op}\|(\mL - \mL_\star)\|^{\frac{1}{2}}_{op} \right)\\
 & \leq \frac{1}{\lambda^{\frac{1}{2}}}\left(\lambda^{\frac{1}{2}}\epsilon  + \sqrt{2}\|\mX_\star\|_{op}^{\frac{1}{4}}\lambda^{\frac{1}{4}}\sqrt{\epsilon}\right) \\
 & = \epsilon + \sqrt{2} \sqrt{\epsilon} \|\mX_\star\|_{op}^{\frac{1}{4}}\lambda^{-\frac{1}{4}} = \sqrt{\epsilon}(\sqrt{\epsilon} + \sqrt{2}\bar{\lambda}^{\frac{1}{4}}).
 \end{split}
\end{equation*}
From the last inequality, we get that,
\begin{equation}
\|(\bm{\Sigma}_{\star}+\lambda\mI)^{-\frac{1}{2}} \Delta_{\lambda}\mL \|_{op} \leq \sqrt{\epsilon}(\sqrt{\epsilon} + \sqrt{2}\bar{\lambda}^{\frac{1}{4}}).
\end{equation}
The proof for the term t$\|\left(\bm{\Sigma}_{\star} + \lambda \mI\right)^{-\frac{1}{2}}\Delta_{\lambda}\mR\|_{op}$ can be derived following similar steps. \qed

\begin{lemma}\label{lem:bound_2}
    Let $\mL \in \mathbb{R}^{m\times d}$, $\mR \in \mathbb{R}^{n\times d}$ and denote $\Delta_{\lambda} \mR = (\mR^\top\mR + \lambda\mI)^{1/2} - (\mR^\top_\star\mR_\star + \lambda\mI)^{1/2}$, $\Delta_{\lambda} \mL = (\mL^\top\mL + \lambda\mI)^{1/2} - (\mL^\top_\star\mL_\star + \lambda\mI)^{1/2}$. Assume that $\max\{\|\|(\mL - \mL_\star)(\bm{\Sigma}_{\star} + \lambda\mI)^{-\frac{1}{2}}\|_{op},\|(\mR - \mR_\star)(\bm{\Sigma}_{\star} + \lambda\mI)^{-\frac{1}{2}}\|_{op}\} \leq \epsilon$. Then it holds,
\begin{equation}
\begin{split}
\|\left(\mR^\top\mR + \lambda\mI\right)^{-\frac{1}{2}}\left(\bm{\Sigma}_{\star} + \lambda\mI \right)^{\frac{1}{2}}\|_{op} &\leq \frac{1}{1-\sqrt{\epsilon}(\sqrt{\epsilon} + \sqrt{2}\bar{\lambda}^{\frac{1}{4}})}, \\
\|\left(\mL^\top\mL + \lambda\mI\right)^{-\frac{1}{2}}\left(\bm{\Sigma}_{\star} + \lambda\mI \right)^{\frac{1}{2}}\|_{op} &\leq \frac{1}{1-\sqrt{\epsilon}(\sqrt{\epsilon} + \sqrt{2}\bar{\lambda}^{\frac{1}{4}})}.
\end{split}
\end{equation}    
\end{lemma}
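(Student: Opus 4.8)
The plan is to control the operator norm of $\left(\mR^\top\mR + \lambda\mI\right)^{-\frac{1}{2}}\left(\bm{\Sigma}_{\star} + \lambda\mI \right)^{\frac{1}{2}}$ by writing its reciprocal as the smallest singular value of the inverse factor and showing that this smallest singular value stays bounded away from zero. Concretely, note that
\[
\left\|\left(\mR^\top\mR + \lambda\mI\right)^{-\frac{1}{2}}\left(\bm{\Sigma}_{\star} + \lambda\mI \right)^{\frac{1}{2}}\right\|_{op}
= \frac{1}{\sigma_{\min}\!\left(\left(\bm{\Sigma}_{\star} + \lambda\mI \right)^{-\frac{1}{2}}\left(\mR^\top\mR + \lambda\mI\right)^{\frac{1}{2}}\right)},
\]
so it suffices to lower bound $\sigma_{\min}\!\big((\bm{\Sigma}_{\star}+\lambda\mI)^{-1/2}(\mR^\top\mR+\lambda\mI)^{1/2}\big)$ by $1-\sqrt{\epsilon}(\sqrt{\epsilon}+\sqrt{2}\bar\lambda^{1/4})$. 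Since $\mR_\star^\top\mR_\star = \bm{\Sigma}_\star$ (recall $\mR_\star = \mV_\star\bm{\Sigma}_\star^{1/2}$), we have $(\bm{\Sigma}_\star+\lambda\mI)^{1/2} = (\mR_\star^\top\mR_\star+\lambda\mI)^{1/2}$, so the matrix whose smallest singular value we want is exactly $(\bm{\Sigma}_\star+\lambda\mI)^{-1/2}\big((\mR_\star^\top\mR_\star+\lambda\mI)^{1/2} + \Delta_\lambda\mR\big) = \mI + (\bm{\Sigma}_\star+\lambda\mI)^{-1/2}\Delta_\lambda\mR$.

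The second step is then a standard perturbation-of-identity argument: by Weyl's inequality (or simply the triangle inequality for singular values),
\[
\sigma_{\min}\!\big(\mI + (\bm{\Sigma}_\star+\lambda\mI)^{-1/2}\Delta_\lambda\mR\big) \;\geq\; 1 - \big\|(\bm{\Sigma}_\star+\lambda\mI)^{-1/2}\Delta_\lambda\mR\big\|_{op}.
\]
Here I would invoke \Cref{lem:bound_1} directly: under the hypothesis $\max\{\|(\mL-\mL_\star)(\bm{\Sigma}_\star+\lambda\mI)^{-1/2}\|_{op}, \|(\mR-\mR_\star)(\bm{\Sigma}_\star+\lambda\mI)^{-1/2}\|_{op}\}\le\epsilon$ — which is precisely the form in which the bound in \Cref{lem:bound_1} is proved (the proof there only uses $\|(\mL-\mL_\star)(\bm{\Sigma}_\star+\lambda\mI)^{-1/2}\|_{op}\le\epsilon$, equivalently $\|\mL-\mL_\star\|_{op}\le\lambda^{1/2}\epsilon$ after using $\sigma_r(\mX_\star)=1$) — we get $\|(\bm{\Sigma}_\star+\lambda\mI)^{-1/2}\Delta_\lambda\mR\|_{op}\le\sqrt{\epsilon}(\sqrt{\epsilon}+\sqrt{2}\bar\lambda^{1/4})$, and symmetrically for $\Delta_\lambda\mL$. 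Combining, $\sigma_{\min}\!\big(\mI+(\bm{\Sigma}_\star+\lambda\mI)^{-1/2}\Delta_\lambda\mR\big)\ge 1-\sqrt{\epsilon}(\sqrt{\epsilon}+\sqrt{2}\bar\lambda^{1/4})$, and taking reciprocals yields the claimed bound for the $\mR$ term; the $\mL$ term is identical with the roles of $\mL$ and $\mR$ swapped.

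The main thing to be careful about — rather than a deep obstacle — is the bookkeeping on the hypotheses: \Cref{lem:bound_1} is stated with the normalization $\sigma_r(\mX_\star)=1$, $\lambda=\|\mX_\star\|_{op}/\bar\lambda$, and the assumption $\mathrm{dist}(\mF,\mF_\star)\le\epsilon\lambda$, whereas the present lemma is stated directly in terms of the operator-norm bound $\max\{\ldots\}\le\epsilon$; one must check that the latter is exactly what the proof of \Cref{lem:bound_1} actually consumes (it is, via the chain $\|(\mL-\mL_\star)(\bm{\Sigma}_\star+\lambda\mI)^{-1/2}\|_{op}\le\epsilon \Rightarrow \|\mL-\mL_\star\|_{op}\le\lambda^{1/2}\epsilon$, since $\|(\bm{\Sigma}_\star+\lambda\mI)^{1/2}\|_{op}=(\,1+\lambda)^{1/2}\le$ something — actually one needs $\|(\bm\Sigma_\star+\lambda\mI)^{1/2}\|_{op}$, and here I should double-check the inequality goes the right way, but this is exactly the computation already carried out inside \Cref{lem:bound_1}). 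Also one should note $1-\sqrt{\epsilon}(\sqrt{\epsilon}+\sqrt{2}\bar\lambda^{1/4})>0$ under the parameter choices of \Cref{theo:main_m} so the reciprocal is legitimate; with $\epsilon = 10^{-4}/(\chi\|\mX_\star\|_{op}^{1/2})$ and $\bar\lambda = 20$ this quantity is comfortably positive. Modulo that verification, the proof is two lines: reduce to $\sigma_{\min}(\mI+E)$, apply $\sigma_{\min}(\mI+E)\ge 1-\|E\|_{op}$, and quote \Cref{lem:bound_1}.
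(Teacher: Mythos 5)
Your proof is correct and follows essentially the same path as the paper: reduce to $\sigma_{\min}\bigl((\bm{\Sigma}_\star+\lambda\mI)^{-1/2}(\mR^\top\mR+\lambda\mI)^{1/2}\bigr)$, recognize this matrix as $\mI + (\bm{\Sigma}_\star+\lambda\mI)^{-1/2}\Delta_\lambda\mR$ (using $\mR_\star^\top\mR_\star=\bm{\Sigma}_\star$), apply Weyl's bound $\sigma_{\min}(\mI+E)\geq 1-\|E\|_{op}$, and invoke \Cref{lem:bound_1} to bound $\|E\|_{op}$. The paper states Weyl's inequality on $\sigma_d$ directly without spelling out the $\mI+E$ form, but the computations are identical, and your side remarks on positivity of the denominator and on the hypothesis bookkeeping are accurate.
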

{\it Proof:}
We have $\left(\mR^\top\mR + \lambda\mI\right)^{-\frac{1}{2}}\left(\bm{\Sigma}_{\star} + \lambda\mI \right)^{\frac{1}{2}} = \left(\left(\bm{\Sigma}_{\star} + \lambda\mI \right)^{-\frac{1}{2}}\left(\mR^\top\mR + \lambda\mI \right)^{\frac{1}{2}}\right)^{-1}$. Hence,
\begin{equation}
\|\left(\mR^\top\mR + \lambda\mI \right)^{-\frac{1}{2}}\left(\bm{\Sigma}_{\star} + \lambda\mI \right)^{\frac{1}{2}}\|_{op} = \frac{1}{\sigma_d\left(\left(\bm{\Sigma}_{\star} + \lambda\mI \right)^{-\frac{1}{2}}\left(\mR^\top\mR + \lambda\mI \right)^{\frac{1}{2}} \right)}.
\end{equation}

By Weyl's inequality, we get
\begin{equation}
\begin{split}
    &\quad~\sigma_d\left(\left(\bm{\Sigma}_{\star} + \lambda\mI \right)^{-\frac{1}{2}}\left(\mR^\top\mR + \lambda\mI \right)^{\frac{1}{2}} \right) \\
    & \geq \sigma_d\left(\left(\bm{\Sigma}_{\star} + \lambda\mI \right)^{-\frac{1}{2}}\left(\mR_\star^\top\mR_\star + \lambda\mI \right)^{\frac{1}{2}} \right) - \|(\bm{\Sigma}_{\star}  +\lambda\mI)^{-\frac{1}{2}}\Delta_\lambda\mR \|_{op} \\
   & = 1 - \|(\bm{\Sigma}_{\star}  +\lambda\mI)^{-\frac{1}{2}}\Delta_\lambda\mR \|_{op}.
\end{split}
\end{equation}
From Lemma~\ref{lem:bound_1}, we have
\begin{equation}
\|(\bm{\Sigma}_{\star}  +\lambda\mI)^{-\frac{1}{2}}\Delta_\lambda\mR \|_{op} \leq\sqrt{\epsilon}(\sqrt{\epsilon} + \sqrt{2}\bar{\lambda}^{\frac{1}{4}}). 
\end{equation}
Hence, we conclude the proof. The proof for $\|\left(\bm{\Sigma}_{\star} + \lambda \mI\right)^{-\frac{1}{2}}\Delta_{\lambda}\mR\|_{op}$ can be similarly derived. 
\qed

\section{Proof of convergence of OPSA}

\begin{theorem}[Convergence of Overparameterized Preconditioned Subgradient Algorithm] \label{theo:main}
Let $\mathcal{L}(\mX):\mathbb{R}^{m\times n}\mapsto \mathbb{R}$ be convex w.r.t $\mX$, and assume that it satisfies the rank-$d$ restricted $L$-Lipschitz continuity assumption and the rank-$d$ restricted $\mu$-sharpness condition, defined above. Assume also that 
\begin{equation}
\mathrm{dist}(\mF_0,\mF_{\star}) \leq \lambda \epsilon 
\end{equation}
and let $\lambda = \frac{\|\mX_\star\|_{op}}{\bar{\lambda}}$, $\sigma_r(\mX_\star)=1$, and $\chi = \frac{L}{\mu}$. Then for the Overparameterized Preconditioned Subgradient algorithm given in Algorithm 1, with the Polyak's stepsize defined in \eqref{eq:polyak}, we have,
\begin{equation}
\mathrm{dist}(\mF_t,\mF_{\star}) \leq \rho(\chi,\epsilon,\lambda)^{\frac{t}{2}}\left(\frac{\|\mX\|_{op}}{\bar{\lambda}} \epsilon\right)^{\frac{1}{2}}
,
\end{equation}
where $\rho(\chi,\epsilon,\lambda)$ is given by
\begin{align}
\rho(\chi,\epsilon,\lambda) & = 1 - \frac{1}{2\chi^2}\sqrt{\frac{\sqrt{2}-1}{1+2\lambda}}\Bigg(\sqrt{\frac{\sqrt{2}-1}{1+2\lambda}}\left(2 - \frac{1}{(1-\sqrt{\epsilon}(\sqrt{\epsilon} + \sqrt{2}\bar{\lambda}^{\frac{1}{4}}))^2}\right) \nonumber\\
&\quad~ - 2\chi \sqrt{2} \left(\frac{3}{2}\epsilon  + 2 \frac{\sqrt{\epsilon}(\sqrt{\epsilon} + \sqrt{2}\bar{\lambda}^{\frac{1}{4}})}{1- \sqrt{\epsilon}(\sqrt{\epsilon} + \sqrt{2}\bar{\lambda}^{\frac{1}{4}}) }\right)\Bigg).
\end{align}
\end{theorem}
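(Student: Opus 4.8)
The plan is to establish a one-step contraction of the distance metric $\mathrm{dist}^2(\mF_t,\mF_\star)$ and then iterate. Fix the optimal alignment matrix $\mQ_t$ for $\mathrm{dist}(\mF_t,\mF_\star)$ and write $\Delta_\mL = \mL_t\mQ_t - \mL_\star$, $\Delta_\mR = \mR_t\mQ_t^{-\top} - \mR_\star$. The natural strategy is to plug the OPSA updates into the (suboptimal, but valid) choice of alignment matrix $\mQ_t$ at step $t+1$ and bound
\begin{equation*}
\mathrm{dist}^2(\mF_{t+1},\mF_\star) \leq \|(\mL_{t+1}\mQ_t - \mL_\star)(\bm{\Sigma}_\star + \lambda\mI)^{\frac{1}{2}}\|_F^2 + \|(\mR_{t+1}\mQ_t^{-\top} - \mR_\star)(\bm{\Sigma}_\star + \lambda\mI)^{\frac{1}{2}}\|_F^2.
\end{equation*}
Expanding the square produces the previous-iterate distance, a cross term linear in the subgradient $\mS_t$, and a quadratic term $\eta_t^2 \gamma_t$ (by the very definition of $\gamma_t$). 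The crucial observation is that the preconditioners $(\mR_t^\top\mR_t + \lambda\mI)^{-1}$ appearing in the update cancel against one $(\bm{\Sigma}_\star + \lambda\mI)^{\frac{1}{2}}$ factor \emph{up to a perturbation}, which is exactly what Lemmata~\ref{lem:bound_1} and~\ref{lem:bound_2} are designed to control; this is what replaces the clean cancellation available in the known-rank scaled subgradient method of \citep{tong2021low}.

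The key steps, in order, would be: (i) use convexity of $\mathcal{L}$ to turn the cross term $\langle \mS_t, \cdot\rangle$ into a lower bound involving $\mathcal{L}(\mX_t) - \mathcal{L}(\mX_\star)$, the optimality gap; (ii) substitute the Polyak step size \eqref{eq:polyak} so that the net effect of the linear and quadratic terms is $-\big(\mathcal{L}(\mX_t) - \mathcal{L}(\mX_\star)\big)^2/\gamma_t$, up to the perturbation errors from replacing $(\mR_t^\top\mR_t+\lambda\mI)^{-1/2}$ by $(\bm{\Sigma}_\star+\lambda\mI)^{-1/2}$; (iii) lower-bound $\mathcal{L}(\mX_t) - \mathcal{L}(\mX_\star)$ via rank-$d$ restricted $\mu$-sharpness by $\mu\|\mX_t - \mX_\star\|_F$, then use Lemma~\ref{lemma:lower_bound_dist} to pass from $\|\mX_t-\mX_\star\|_F$ to $\mathrm{dist}(\mF_t,\mF_\star)$ (this is where the factor $\sqrt{(\sqrt2-1)/(1+2\lambda)}$ enters, using $\sigma_r(\mX_\star)=1$); (iv) upper-bound $\gamma_t$ using rank-$d$ restricted $L$-Lipschitz continuity (so $\|\mS_t\|$-type quantities are $\lesssim L$) together with Lemma~\ref{lem:bound_2} to control $\|(\mR_t^\top\mR_t+\lambda\mI)^{-1/2}(\bm{\Sigma}_\star+\lambda\mI)^{1/2}\|_{op}$, which yields the $1/(1-\sqrt{\epsilon}(\sqrt{\epsilon}+\sqrt2\bar\lambda^{1/4}))^2$ factor; (v) collect the perturbation terms from Lemma~\ref{lem:bound_1} — these are the $\frac32\epsilon$ and $2\sqrt{\epsilon}(\sqrt{\epsilon}+\sqrt2\bar\lambda^{1/4})/(1-\sqrt{\epsilon}(\sqrt{\epsilon}+\sqrt2\bar\lambda^{1/4}))$ contributions, scaled by $2\chi\sqrt2$ — and assemble everything into the stated $\rho(\chi,\epsilon,\lambda)$. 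Finally, an induction on $t$ closes the argument: one checks $\rho < 1$ under the hypotheses (so the bound $\mathrm{dist}(\mF_t,\mF_\star)\leq\lambda\epsilon$ is preserved, keeping all iterates in the region where Lemmata~\ref{lem:bound_1}–\ref{lem:bound_2} apply), and Lemma~\ref{lem:up_bound_metric} converts the distance bound into the $\|\mL_t\mR_t^\top - \mX_\star\|_F$ bound.

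The main obstacle I expect is step (ii)–(v): tracking the asymmetry-induced \emph{imbalance} through the preconditioners. In the balanced/symmetric settings the preconditioner $(\mL_t^\top\mL_t+\lambda\mI)^{-1}$ and the weight $(\bm{\Sigma}_\star+\lambda\mI)$ are related in a way that telescopes cleanly; here $\mL_t$ and $\mR_t$ can drift apart (only their product is pinned down), and because $\bm{\Sigma}_\star$ is singular in the overparameterized regime one cannot simply invert it — this is precisely why the $\lambda\mI$ shift is baked into the metric \eqref{eq:conv_metric}. Making the cross term and the $\eta_t^2\gamma_t$ term combine correctly therefore requires the two-sided operator-norm perturbation bounds of Lemmata~\ref{lem:bound_1} and~\ref{lem:bound_2}, and the bookkeeping of how many powers of $(1-\sqrt{\epsilon}(\sqrt{\epsilon}+\sqrt2\bar\lambda^{1/4}))$ appear where. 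The alignment matrix $\mQ_t$ also needs care: although using the previous step's $\mQ_t$ at step $t+1$ only gives an upper bound (which is fine), one must verify that the alignment is compatible with the preconditioner structure so that the cancellation of $\mQ_t$ against $\mQ_t^{-1}$ inside $\mL_{t+1}\mQ_t$ actually goes through. Once these perturbation bounds are in hand, the remaining algebra is the routine "descent lemma + Polyak + sharpness" computation, and plugging $\lambda = \|\mX_\star\|_{op}/\bar\lambda$, $\bar\lambda = 20\|\mX_\star\|_{op}$, $c = 20$ into $\rho(\chi,\epsilon,\lambda)$ recovers the simplified rate $1 - 0.12/\chi^2$ of Theorem~\ref{theo:main_m}.
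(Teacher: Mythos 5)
Your outline matches the paper's proof essentially step for step — the one-step contraction via the suboptimal alignment $\mQ_t$, the split of the cross term into the clean $\langle \mS, \mX - \mX_\star\rangle$ piece plus a perturbation remainder, Lemmata~\ref{lem:bound_1}--\ref{lem:bound_2} to control the mismatch between $(\mR^\top\mR+\lambda\mI)^{-1/2}$ and $(\bm{\Sigma}_\star+\lambda\mI)^{-1/2}$, the convexity $\rightarrow$ sharpness $\rightarrow$ Lemma~\ref{lemma:lower_bound_dist} chain to get $\mathrm{dist}(\mF_t,\mF_\star)$ into the bound, the Polyak substitution, and the step-size lower bound borrowed from \citep{tong2021low} — so this is the same argument. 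To your credit, you also flag that the identity $\mL_{t+1}\mQ_t = \mL - \eta\mS\mR(\mR^\top\mR+\lambda\mI)^{-1}$ needs justification since the $\lambda\mI$ shift breaks the exact $\mQ$-equivariance that the unregularized preconditioner $(\mR_t^\top\mR_t)^{-1}$ enjoys for general invertible $\mQ$; the paper's write-up takes this for granted.
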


{\textit Proof:}
We denote $\mL = \mL_t\mQ_t, \mR = \mR_t\mQ_t^{-\top}, \Delta_\mL = \mL - \mL_\star, \Delta_\mR = \mR - \mR_\star$ and $\mS = \mS_t$.

We focus on the contraction of $\mathrm{dist}\left(\mF_{t+1},\mF_\star\right)$. 
\begin{equation}
\begin{split}
&\quad~\mathrm{dist}^2\left(\mF_{t+1},\mF_\star\right)
\leq \|({\mL}_{t+1}\mQ_t - \mL_\star)(\bm{\Sigma}_{\star} + \lambda\mI)^{\frac{1}{2}}\|^2_F + \|({\mR}_{t+1}\mQ^{-\top}_t - \mR_\star)(\bm{\Sigma}_{\star} + \lambda\mI)^{\frac{1}{2}}\|^2_F .
\end{split}
\end{equation}
Let $\mL = \mL_t\mQ_t, \mR = \mR_t\mQ_t^{-\top}$ and  $\mS = \mS_t$ and first bound the term $\|({\mL}_{t+1}\mQ_t - \mL_\star)(\bm{\Sigma}_{\star} + \lambda\mI)^{\frac{1}{2}}\|^2_F$ as follows, 
\begin{equation}
\begin{split}
 &~\|({\mL}_{t+1}\mQ_t - \mL_\star)(\bm{\Sigma}_{\star}+ \lambda\mI)^{\frac{1}{2}}\|^2_F = \|\left(\mL - \eta \mS\mR\left(\mR^\top\mR +  \lambda\mI\right)^{-1} - \mL_\star\right)\left(\bm{\Sigma}_{\star} + \lambda\mI\right)^\frac{1}{2}\|^2_F  \\
 =&~\|\Delta_\mL\left(\bm{\Sigma}_{\star} + \lambda\mI\right)^{\frac{1}{2}}\|^2_F - 2\eta\langle \mS, \Delta_\mL\left(\bm{\Sigma}_{\star} + \lambda\mI\right)\left(\mR^\top\mR +\lambda \mI \right)^{-1}\mR^\top \rangle \\
 &~ + \eta^2\|\mS\mR\left(\mR^\top\mR + \lambda \mI \right)^{-1}\left(\bm{\Sigma}_{\star} + \lambda\mI\right)^{\frac{1}{2}}\|^2_F  \\
 =&~ \|\Delta_\mL\left(\bm{\Sigma}_{\star} + \lambda\mI\right)^{\frac{1}{2}}\|^2_F 
 -2 \eta\underbrace{\langle \mS, \Delta_\mL\mR^\top_\star  +\frac{1}{2}\Delta_\mL\Delta_\mR^\top \rangle}_{\mathcal{O}_{1}} \\
 &~ -2 \eta\underbrace{\langle \mS, \Delta_\mL\left(\bm{\Sigma}_{\star} + \lambda\mI\right)\left(\mR^\top\mR +\lambda\mI\right)^{-1}\mR^\top - \Delta_\mL\mR^\top_\star 
 - \frac{1}{2}\Delta_\mL\Delta_\mR^\top\rangle}_{\mathcal{O}_{2}} \nonumber\\
&~ + \eta^2\underbrace{\|\mS\mR\left(\mR^\top\mR + \lambda\mI\right)^{-1}\left(\bm{\Sigma}_{\star} + \lambda\mI\right)^{\frac{1}{2}}\|^2_F}_{\mathcal{O}_3} .\label{eq:no_subgrad}
\end{split}
\end{equation}
%


We focus on bounding the term $|\mathcal{O}_{2}|$,
\begin{align}
&\quad~|\langle \mS, \Delta_\mL\left(\bm{\Sigma}_{\star} + \lambda\mI\right)\left(\mR^\top\mR +\lambda\mI\right)^{-1}\mR^\top - \Delta_\mL\mR^\top_\star 
 - \frac{1}{2}\Delta_\mL\Delta_\mR^\top\rangle| \cr
&\leq \|\mS\|_F\|\Delta_\mL\left(\bm{\Sigma}_{\star} + \lambda\mI\right)^{\frac{1}{2}}\|_F\times\cr
& \quad~(\|\mR\left(\mR^\top\mR +\lambda\mI\right)^{-1}\left(\bm{\Sigma}_{\star} + \lambda \mI\right)^{\frac{1}{2}} - \mR_\star\left(\bm{\Sigma}_{\star} +\lambda \mI\right)^{-\frac{1}{2}}\|_{op} + \frac{1}{2}\|\Delta_\mR\left(\bm{\Sigma}_{\star} + \lambda\mI\right)^{-\frac{1}{2}}\|_{op}), \label{eq:proof_1}
\end{align}
where we have used the inequalities $\|\mA\mB\|_F \leq \|\mA\|_{op}\|\mB\|_F$ and $\|\mA - \mB\|_{op} \leq \|\mA\|_{op} + \|\mB\|_{op}$.

For the first term of \eqref{eq:proof_1} we have
\begin{equation*}
\begin{split}
&\quad~\|\mR\left(\mR^\top\mR +\lambda\mI\right)^{-1}\left(\bm{\Sigma}_{\star} + \lambda \mI\right)^{\frac{1}{2}} - \mR_\star\left(\bm{\Sigma}_{\star} +\lambda \mI\right)^{-\frac{1}{2}}\|_{op}  \\
&=\|\mR\left(\mR^\top\mR +\lambda\mI\right)^{-\frac{1}{2}}\left(\mR^\top\mR +\lambda\mI\right)^{-\frac{1}{2}}\left(\bm{\Sigma}_{\star} + \lambda \mI\right)^{\frac{1}{2}} - \mR_\star\left(\bm{\Sigma}_{\star} +\lambda \mI\right)^{-\frac{1}{2}}\|_{op} \\
&= \|\mR\left(\mR^\top\mR +\lambda\mI\right)^{-\frac{1}{2}}\left(\left(\left(\mR^\top\mR +\lambda\mI\right)^{-\frac{1}{2}} -\left(\mR_\star^\top\mR_\star +\lambda\mI\right)^{-\frac{1}{2}}\right)\left(\bm{\Sigma}_{\star} + \lambda \mI\right)^{\frac{1}{2}} + \left(\mR_\star^\top\mR_\star 
+\lambda\mI\right)^{-\frac{1}{2}}\left(\bm{\Sigma}_{\star} + \lambda \mI\right)^{\frac{1}{2}}\right) \\
&\quad~- \mR_\star\left(\bm{\Sigma}_{\star} +\lambda \mI\right)^{-\frac{1}{2}}\|_{op} \\
&= \|\mR\left(\mR^\top\mR +\lambda\mI\right)^{-\frac{1}{2}}\left(\left(\left(\mR^\top\mR +\lambda\mI\right)^{-\frac{1}{2}} -\left(\mR_\star^\top\mR_\star +\lambda\mI\right)^{-\frac{1}{2}}\right)\left(\bm{\Sigma}_\star + \lambda \mI\right)^{\frac{1}{2}} + \mI \right) - \mR_\star\left(\bm{\Sigma}_\star +\lambda \mI\right)^{-\frac{1}{2}}\|_{op} \\
&\leq \|\mR\left(\mR^\top\mR +\lambda\mI\right)^{-\frac{1}{2}}\|_{op} \left\|\left(\left(\mR^\top\mR +\lambda\mI\right)^{-\frac{1}{2}} -\left(\mR_\star^\top\mR_\star +\lambda\mI\right)^{-\frac{1}{2}}\right)\left(\bm{\Sigma}_\star + \lambda \mI\right)^{\frac{1}{2}}\right\|_{op} \\
&\quad~+\|\mR\left(\mR^\top\mR +\lambda\mI\right)^{-\frac{1}{2}} - \mR_\star\left(\mR^\top_\star\mR_\star +\lambda \mI\right)^{-\frac{1}{2}}\|_{op}.
\end{split}
\end{equation*}
We now focus on bounding the term $\|\big(\left(\mR^\top\mR +\lambda\mI\right)^{-\frac{1}{2}} -\left(\mR_\star^\top\mR_\star +\lambda\mI\right)^{-\frac{1}{2}}\big)\left(\bm{\Sigma}_{\star} + \lambda \mI\right)^{\frac{1}{2}}\|_{op}$. Let $
\mA = \mR^\top \mR + \lambda \mI$ and $\mB = \mR_\star^\top \mR_\star + \lambda \mI$. We have
\begin{align}
&\quad~\|(\mA^{-\frac{1}{2}}
 - \mB^{-\frac{1}{2}})\left(\bm{\Sigma}_{\star} + \lambda \mI\right)^{\frac{1}{2}}\|_{op} = \|(\mA^{-\frac{1}{2}}
 - \mB^{-\frac{1}{2}})(\mA^{\frac{1}{2}}\mB^{\frac{1}{2}})(\mA^{\frac{1}{2}}\mB^{\frac{1}{2}})^{-1} \left(\bm{\Sigma}_{\star} + \lambda \mI\right)^{\frac{1}{2}}\|_{op} \notag\\
 & = \|(\mB^{\frac{1}{2}}
 - \mB^{-\frac{1}{2}}\mA^{\frac{1}{2}}\mB^{\frac{1}{2}})(\mA^{\frac{1}{2}}\mB^{\frac{1}{2}})^{-1}\left(\bm{\Sigma}_{\star} + \lambda \mI\right)^{\frac{1}{2}}\|_{op}  \notag\\
 & = \|(\mB^{\frac{1}{2}}
 - \mB^{-\frac{1}{2}}\mA^{\frac{1}{2}}\mB^{\frac{1}{2}})\mB^{-\frac{1}{2}}(\mA^{-\frac{1}{2}})\left(\bm{\Sigma}_{\star} + \lambda \mI\right)^{\frac{1}{2}}\|_{op} \notag \\
 & = \|(\mI
 - \mB^{-\frac{1}{2}}\mA^{\frac{1}{2}})(\mA^{-\frac{1}{2}})\left(\bm{\Sigma}_{\star} + \lambda \mI\right)^{\frac{1}{2}}\|_{op} \notag \\
 & \leq \|\mB^{-\frac{1}{2}}\left(\mB^{\frac{1}{2}} - \mA^{\frac{1}{2}}\right)\|_{op}\|\mA^{-\frac{1}{2}}\left(\bm{\Sigma}_\star+\lambda \mI\right)^{\frac{1}{2}}\|_{op} \notag \\
 & = \frac{\sqrt{\epsilon}(\sqrt{\epsilon} + \sqrt{2}\bar{\lambda}^{\frac{1}{4}})}{1- \sqrt{\epsilon}(\sqrt{\epsilon} + \sqrt{2}\bar{\lambda}^{\frac{1}{4}}) },
\end{align}

where the last inequality follows by using Lemmata \ref{lem:bound_1} and \ref{lem:bound_2}. 

Next, we bound the term $\|\mR\left(\mR^\top\mR +\lambda\mI\right)^{-\frac{1}{2}} - \mR_\star\left(\mR^\top_\star\mR_\star +\lambda \mI\right)^{-\frac{1}{2}}\|_{op}$. We have 
\begin{align}
& \quad~ \|\mR\left(\mR^\top\mR +\lambda\mI\right)^{-\frac{1}{2}} - \mR_\star\left(\mR^\top_\star\mR_\star +\lambda \mI\right)^{-\frac{1}{2}}\|_{op} \cr
& = \|(\mR - \mR_\star)\left(\mR^\top\mR +\lambda\mI\right)^{-\frac{1}{2}} + \mR_\star\left(\left(\mR^\top\mR +\lambda\mI\right)^{-\frac{1}{2}} - \left(\mR^\top_\star\mR_\star +\lambda \mI\right)^{-\frac{1}{2}}\right)\|_{op} \cr
& \leq \|(\mR - \mR_\star)\left(\mR^\top\mR +\lambda\mI\right)^{-\frac{1}{2}}\|_{op} + \|\mR_\star\left(\left(\mR^\top\mR +\lambda\mI\right)^{-\frac{1}{2}} - \left(\mR^\top_\star\mR_\star +\lambda \mI\right)^{-\frac{1}{2}}\right)\|_{op} 
\cr 
&  \leq \|\mR - \mR_\star\|_{op}\|\left(\mR^\top\mR +\lambda\mI\right)^{-\frac{1}{2}}\|_{op} + \|\mR_\star\left(\left(\mR^\top\mR +\lambda\mI\right)^{-\frac{1}{2}} - \left(\mR^\top_\star\mR_\star +\lambda \mI\right)^{-\frac{1}{2}}\right)\|_{op}  \cr
& \leq \epsilon  + \frac{\sqrt{\epsilon}(\sqrt{\epsilon} + \sqrt{2}\bar{\lambda}^{\frac{1}{4}})}{1- \sqrt{\epsilon}(\sqrt{\epsilon} + \sqrt{2}\bar{\lambda}^{\frac{1}{4}}) },
\end{align}
where we have used the inequality $\|\mR - \mR_\star\|_{op}\leq \epsilon\sqrt{\lambda}$. Note that this result holds, since we assumed $\mathrm{dist}(\mF,\mF_\star) \leq \lambda\epsilon$, hence  we can show the following,
%
%
\begin{align}
&\|(\mL - \mL_\star)(\bm{\Sigma}_\star + \lambda \mI)^{\frac{1}{2}})(\bm{\Sigma}_\star + \lambda \mI)^{-\frac{1}{2}})\|^2_{op}\sigma^2_r((\bm{\Sigma}_\star + \lambda \mI)^{\frac{1}{2}})) \\
&+ \|(\mR - \mR_\star)(\bm{\Sigma}_\star + \lambda \mI)^{\frac{1}{2}})(\bm{\Sigma}_\star + \lambda \mI)^{\frac{1}{2}})\|^2_{op}\sigma^2_r((\bm{\Sigma}_\star + \lambda \mI)^{\frac{1}{2}})) < \lambda^2 \epsilon^2 \cr
&\rightarrow \|(\mL - \mL_\star)(\bm{\Sigma}_\star + \lambda \mI)^{\frac{1}{2}})(\bm{\Sigma}_\star + \lambda \mI)^{-\frac{1}{2}})\|^2_{op} \lambda + \|(\mR - \mR_\star)(\bm{\Sigma}_\star + \lambda \mI)^{\frac{1}{2}})(\bm{\Sigma}_\star + \lambda \mI)^{\frac{1}{2}}{2})\|^2_{op}\sigma_r((\bm{\Sigma}_\star + \lambda \mI)^{\frac{1}{2}})) < \lambda^2 \epsilon^2 \cr
&\mathrm{max}\left(\|\mL - \mL_\star\|_{op}, \|\mR - \mR_\star\|_{op}\right) < \sqrt{\lambda} \epsilon.
\end{align}

Combining the above bounds we get 
\begin{align}
&\quad~\|\mR\left(\mR^\top\mR +\lambda\mI\right)^{-1}\left(\bm{\Sigma}_{\star} + \lambda \mI\right)^{\frac{1}{2}} - \mR_\star\left(\bm{\Sigma}_{\star} +\lambda \mI\right)^{-\frac{1}{2}}\|_{op} \cr
& \leq \epsilon  + 2 \frac{\sqrt{\epsilon}(\sqrt{\epsilon} + \sqrt{2}\bar{\lambda}^{\frac{1}{4}})}{1- \sqrt{\epsilon}(\sqrt{\epsilon} + \sqrt{2}\bar{\lambda}^{\frac{1}{4}}) }.
\end{align}

We now focus on bounding $\mathcal{O}_3$,
\begin{align}
    &\quad~\|\mS\mR\left(\mR^\top\mR + \lambda\mI\right)^{-1}\left(\bm{\Sigma}_{\star} + \lambda\mI\right)^{\frac{1}{2}}\|^2_F \cr&\leq \underbrace{\|\mS\mR\left(\mR^\top\mR + \lambda\mI\right)^{-\frac{1}{2}}\|^2_F}_{\mathcal{O}_{3a}}\times 
    \underbrace{\|\left(\mR^\top\mR + \lambda\mI\right)^{-\frac{1}{2}}\left(\bm{\Sigma}_{\star} + \lambda\mI\right)^{\frac{1}{2}}\|^2_{op}}_{\mathcal{O}_{3b}} \cr
&\leq\|\mS\mR\left(\mR^\top\mR + \lambda\mI\right)^{-\frac{1}{2}}\|^2_F \frac{1}{(1-\sqrt{\epsilon}(\sqrt{\epsilon} + \sqrt{2}\bar{\lambda}^{\frac{1}{4}}))^2}.
\label{eq:ineq_2}
\end{align}
Using the derived inequalities above we have 
\begin{align*}
\|(\mL_{t+1}\mQ_t - \mL_\star)(\bm{\Sigma}_{\star}+ \lambda\mI)^{\frac{1}{2}}\|^2_F \leq&~ \|\Delta_\mL\left(\bm{\Sigma}_{\star} + \lambda\mI\right)^{\frac{1}{2}}\|^2_F - 2\eta \langle \mS , \Delta_\mL\mR_\star^\top +\frac{1}{2}\Delta_\mL\Delta^\top_\mR\rangle  \cr
&+ 2\eta L \|\Delta_\mL\left(\bm{\Sigma}_{\star} + \lambda \mI\right)^{\frac{1}{2}} \|_F  \left(\frac{3}{2}\epsilon + 2 \frac{\sqrt{\epsilon}(\sqrt{\epsilon} + \sqrt{2}\bar{\lambda}^{\frac{1}{4}})}{1- \left(\sqrt{\epsilon}(\sqrt{\epsilon} + \sqrt{2}\bar{\lambda}^{\frac{1}{4}}) \right)}\right)  \nonumber \cr
 &+ \eta^2 \frac{1}{(1-\sqrt{\epsilon}(\sqrt{\epsilon} + \sqrt{2}\bar{\lambda}^{\frac{1}{4}}))^2}\|\mS\mR(\mR^\top\mR + \lambda\mI)\|^2_F.
\end{align*}

Following the same steps for bounding the term $\|(\mR_{t+1}\mQ^{-\top}_t - \mR_\star)(\bm{\Sigma}_{\star}+ \lambda\mI)^{\frac{1}{2}}\|^2_F$, we have
\begin{align*}
&~\mathrm{dist}^2(\mF_{t+1},\mF_\star) \\
\leq&~ \|\Delta_\mL\left(\bm{\Sigma}_{\star} + \lambda\mI\right)^{\frac{1}{2}}\|^2_F + \|\Delta_\mR\left(\bm{\Sigma}_{\star} + \lambda\mI\right)^{\frac{1}{2}}\|^2_F  - 2\eta \langle \mS, \mX - \mX_\star\rangle  \cr
& + 2\eta L \left(\frac{3}{2}\epsilon+ 2 \frac{\sqrt{\epsilon}(\sqrt{\epsilon} + \sqrt{2}\bar{\lambda}^{\frac{1}{4}})}{1- \sqrt{\epsilon}(\sqrt{\epsilon} + \sqrt{2}\bar{\lambda}^{\frac{1}{4}}) }\right)  \Big(\|\Delta_\mL\left(\bm{\Sigma}_{\star} + \lambda\mI\right)^{\frac{1}{2}}\|_F  + \|\Delta_\mR\left(\bm{\Sigma}_{\star} + \lambda\mI\right)^{\frac{1}{2}}\|_F\Big) \cr
& +\eta^2 \frac{1}{(1-\sqrt{\epsilon}(\sqrt{\epsilon} + \sqrt{2}\bar{\lambda}^{\frac{1}{4}}))^2}(\left(\|\mS \mR(\mR^\top\mR + \lambda\mI)^{-\frac{1}{2}}\|^2_F + \|\mS^\top\mL(\mL^\top\mL + \lambda\mI)^{-\frac{1}{2}}\|^2_F \right) \cr
\leq&~ \mathrm{dist}^2(\mF_{t},\mF_\star) - 2\eta  \left(\tilde{\mathcal{L}}(\mL_t\mR^\top_t) - \tilde{\mathcal{L}}(\mX_\star)\right)+ 2\eta L \sqrt{2} \left(\frac{3}{2}\epsilon+ 2 \frac{\sqrt{\epsilon}(\sqrt{\epsilon} + \sqrt{2}\bar{\lambda}^{\frac{1}{4}})}{1- \sqrt{\epsilon}(\sqrt{\epsilon} + \sqrt{2}\bar{\lambda}^{\frac{1}{4}}) }\right)   \mathrm{dist}(\mF_{t},\mF_\star) \cr
& +\eta^2\frac{1}{(1-\sqrt{\epsilon}(\sqrt{\epsilon} + \sqrt{2}\bar{\lambda}^{\frac{1}{4}}))^2}\left(\|\mS\mR(\mR^\top\mR + \lambda\mI)^{-\frac{1}{2}}\|^2_F + \|\mS^\top\mL(\mL^\top\mL + \lambda\mI)^{-\frac{1}{2}}\|^2_F \right).
\label{eq:the_1_bef_step}
\end{align*}

\vspace{0.2in}
\textbf{Convergence with Polyak's stepsize.}
We use Polyak's stepsize defined as
\begin{align}
    \eta_t = \frac{\tilde{\mathcal{L}}(\mL_t\mR^\top_t) - \tilde{\mathcal{L}}(\mX_\star)}{\|\mS\mR(\mR^\top\mR + \lambda\mI)^{-\frac{1}{2}}\|^2_F +\|\mS ^\top\mL(\mL^\top\mL + \lambda\mI)^{-\frac{1}{2}}\|^2_F}.
\end{align}

Due to convexity of $\mathcal{L}(\mX)$,  Lemma~\ref{lemma:lower_bound_dist}, and the assumption $\sigma_r(\mX_\star)=1$, we also have,
\begin{align*}
    \langle \mS, \mX - \mX_\star\rangle \geq \tilde{\mathcal{L}}(\mX) - \tilde{\mathcal{L}}(\mX_\star) \geq \mu \|\mX - \mX_\star \| \geq \mu \sqrt{\frac{(\sqrt{2}-1)}{1+2\lambda}}\mathrm{dist}(\mF,\mF_{\star}).
\end{align*}

We thus get 
\begin{align*}
\mathrm{dist}^2(\mF_{t+1},\mF_\star) 
&\leq \mathrm{dist}^2(\mF_{t},\mF_\star) - \eta_t \left(2 - \frac{1}{(1-\sqrt{\epsilon}(\sqrt{\epsilon} + \sqrt{2}\bar{\lambda}^{\frac{1}{4}}))^2}\right)  (\tilde{\mathcal{L}}(\mX) - \tilde{\mathcal{L}}(\mX_\star)) \cr
&\quad~ + 2\sqrt{2}\eta_t L  \left(\frac{3}{2}\epsilon+ 2 \frac{\sqrt{\epsilon}(\sqrt{\epsilon} + \sqrt{2}\bar{\lambda}^{\frac{1}{4}})}{1- \sqrt{\epsilon}(\sqrt{\epsilon} + \sqrt{2}\bar{\lambda}^{\frac{1}{4}}) }\right)  \mathrm{dist}(\mF_{t},\mF_\star) \cr
& \leq \mathrm{dist}^2(\mF_{t},\mF_\star) - \eta_t \mu \Bigg(\left(2 - \frac{1}{(1-\sqrt{\epsilon}(\sqrt{\epsilon} + \sqrt{2}\bar{\lambda}^{\frac{1}{4}}))^2}\right) \sqrt{\frac{\sqrt{2}-1}{1+2\lambda}} \cr
&\quad~  - 2\sqrt{2} \frac{L}{\mu}\left(\frac{3}{2}\epsilon+ 2 \frac{\sqrt{\epsilon}(\sqrt{\epsilon} + \sqrt{2}\bar{\lambda}^{\frac{1}{4}})}{1- \sqrt{\epsilon}(\sqrt{\epsilon} + \sqrt{2}\bar{\lambda}^{\frac{1}{4}}) }\right)  \Bigg) \mathrm{dist}(\mF_{t},\mF_\star).
\end{align*}

Using \citep[Lemma A.2.2 and Lemma 4]{tong2021low}, we can lower bound the step size as 
\begin{align*}
\eta_t \geq \frac{\mu}{2L^2} \sqrt{\frac{\sqrt{2}-1}{1+2\lambda}}\mathrm{dist}(\mF,\mF_{\star}).
\end{align*}
We define $\chi= \frac{L}{\mu}$ and then we get 
\begin{align*}
&\quad~\mathrm{dist}^2(\mF_{t+1},\mF_\star) \cr
& \leq 
\Bigg(1 - \frac{1}{2\chi^2}\sqrt{\frac{\sqrt{2}-1}{1+2\lambda}}\Bigg(\sqrt{\frac{\sqrt{2}-1}{1+2\lambda}}\left(2 - \frac{1}{(1-\sqrt{\epsilon}(\sqrt{\epsilon} + \sqrt{2}\bar{\lambda}^{\frac{1}{4}}))^2}\right) \nonumber\\
&\quad~ - 2\chi \sqrt{2} \left(\frac{3}{2}\epsilon+ 2 \frac{\sqrt{\epsilon}(\sqrt{\epsilon} + \sqrt{2}\bar{\lambda}^{\frac{1}{4}})}{1- \sqrt{\epsilon}(\sqrt{\epsilon} + \sqrt{2}\bar{\lambda}^{\frac{1}{4}}) }\right) \Bigg)
 \Bigg)\mathrm{dist}^2(\mF_{t},\mF_\star).
\end{align*}

We define $\rho(\chi,\epsilon,\lambda)$ as the contraction rate given by,
\begin{align*}
\rho(\chi,\epsilon,\lambda) & = 1 - \frac{1}{2\chi^2}\sqrt{\frac{\sqrt{2}-1}{1+2\lambda}}\Bigg(\sqrt{\frac{\sqrt{2}-1}{1+2\lambda}}\left(2 - \frac{1}{(1-\sqrt{\epsilon}(\sqrt{\epsilon} + \sqrt{2}\bar{\lambda}^{\frac{1}{4}}))^2}\right) \nonumber\\
&\quad~ - 2\chi \sqrt{2} \left(\frac{3}{2}\epsilon  + 2 \frac{\sqrt{\epsilon}(\sqrt{\epsilon} + \sqrt{2}\bar{\lambda}^{\frac{1}{4}})}{1- \sqrt{\epsilon}(\sqrt{\epsilon} + \sqrt{2}\bar{\lambda}^{\frac{1}{4}}) }\right)\Bigg).
\end{align*}
This finishes the proof. 
\qed

\subsection{Proof of Theorem 5.4}

\textbf{Theorem 5.4}\textit{
Let $\mathcal{L}(\mX):\mathbb{R}^{m\times n}\mapsto \mathbb{R}$ be convex w.r.t $\mX$, and assume that it satisfies the rank-$d$ restricted $L$-Lipschitz continuity assumption and the rank-$d$ restricted $\mu$-sharpness condition, defined above. Assume also that $\lambda = \frac{\|\mX_\star\|_{op}}{\bar{\lambda}}$, $\bar{\lambda} = c \|\mX_\star\|_{op}$, $c=20$, and without loss of generality $\sigma_r(\mX_\star)=1$. Set $\epsilon =  \frac{10^{-4}}{\chi \|\mX_\star\|^{\frac{1}{2}}_{op}}$, which implies. 
\begin{equation}
\mathrm{dist}(\mF_0,\mF_{\star}) \leq \lambda \epsilon = \frac{10^{-4}}{\chi} \frac{\|\mX_\star\|^{-\frac{1}{2}}_{op}}{c},
\end{equation}
where $\chi = \frac{L}{\mu}$. Then for the Overparameterized Preconditioned Subgradient Algorithm (OPSA) given in Algorithm 1, with the Polyak's step-size defined in \eqref{eq:polyak}, we have,
\begin{equation*}
\begin{split}
\mathrm{dist}(\mF_t,\mF_{\star}) &\leq \Big(1 - \frac{0.12}{\chi^2}\Big)^{\frac{t}{2}}\frac{10^{-4}\|\mX_\star\|^{-\frac{1}{2}}_{op}}{c \chi}, \\
\| \mL_t\mR^\top_t - \mX_\star \|_F&\leq \Big(1 - \frac{0.12}{\chi^2}\Big)^{\frac{t}{2}}\frac{1.5\times 10^{-4}\|\mX_\star\|^{-\frac{1}{2}}_{op}}{c \chi }.
\end{split}
\end{equation*}}
{\it Proof:} By using the general expression for the rate $\rho(\chi,\epsilon,\lambda)$ provided in Theorem \ref{theo:main} and for $\lambda=\frac{1}{c}$,  and $\epsilon=\frac{10^{-4}}{\|\mX_\star\|^{\frac{1}{2}}_{op}\chi}$. We have,
\begin{equation}
\begin{split}
(1-\rho(\chi,\epsilon,\lambda))\chi^2 &\geq  0.12,
\end{split}
\end{equation}
which leads to the contraction rate showing up in the statement of the Theorem. By applying Lemma \ref{lem:up_bound_metric}, we can show the second inequality.
\qed

\vfill



\section{More Numerical Experiments} \label{sec:more numerical}



\subsection{Partially Observed Video Background Subtraction}

Video background subtraction with partial observation is one standard benchmark for robust matrix completion. Note that the RIP condition of matrix completion is only defined with the tangent space projection and the incoherence condition. Nevertheless, following the setup as in \citep{cai2024deeply}, we apply OPSA to the partially observed video background subtraction task on two real video datasets, namely \textit{shoppingmall} and \textit{restaurant}\footnote{The datasets were originally provided by \url{http://perception.i2r.a-star.edu.sg/bk_model/bk_index.html}; however, it is no longer available. The datasets are now available at  \url{https://hqcai.org/datasets/shoppingmall.mat} and \url{https://hqcai.org/datasets/restaurant.mat}, respectively.}. The visual results are reported in \Cref{fig:shoppingmall,fig:restaurant}, compared against the fully observed RPCA results. Although the rank is overestimated and the observation is partially accessible, OPSA achieves crisp visual results.

\begin{figure}[h]
\centering
\includegraphics[width=.195\linewidth]{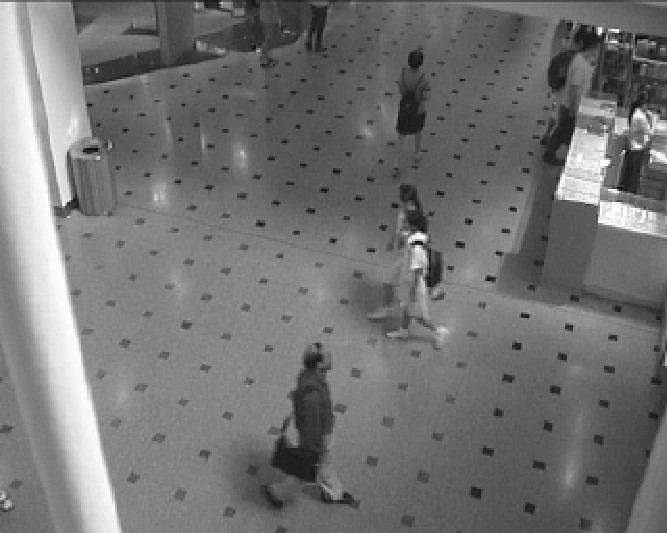} \hfill
\includegraphics[width=.195\linewidth]{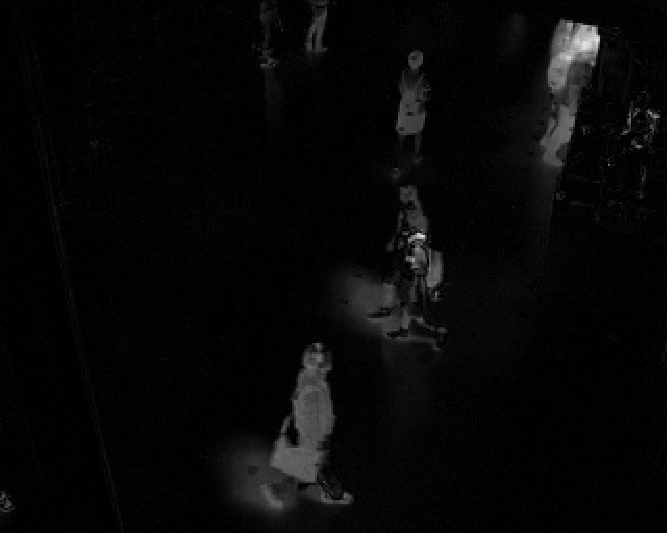} \hfill
\includegraphics[width=.195\linewidth]{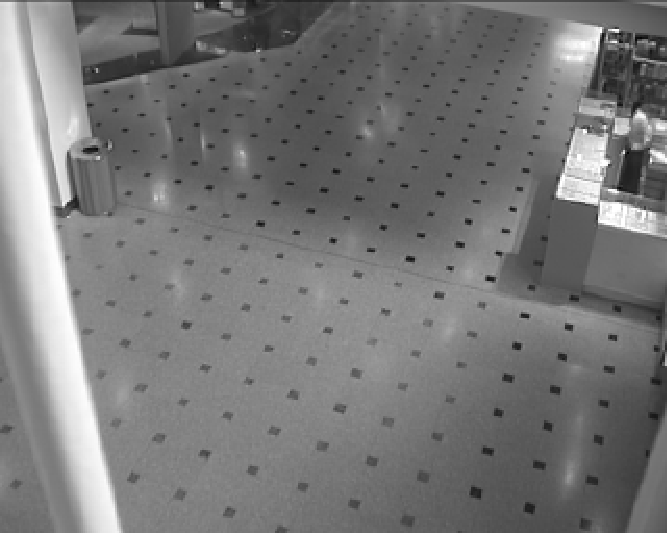} \hfill
\includegraphics[width=.195\linewidth]{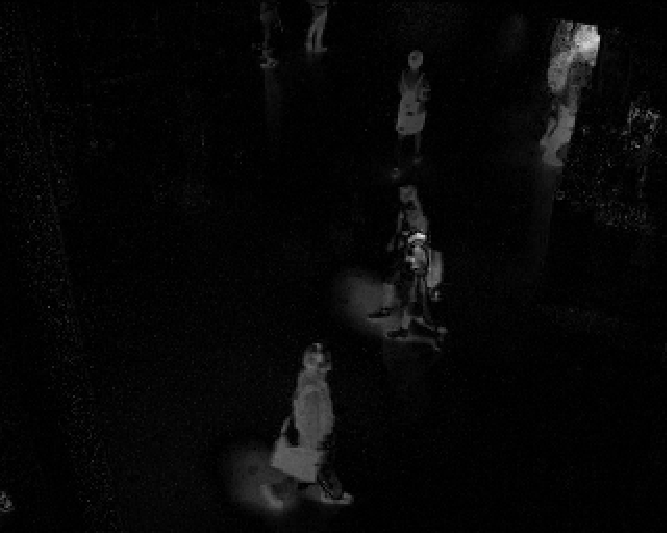} \hfill
\includegraphics[width=.195\linewidth]{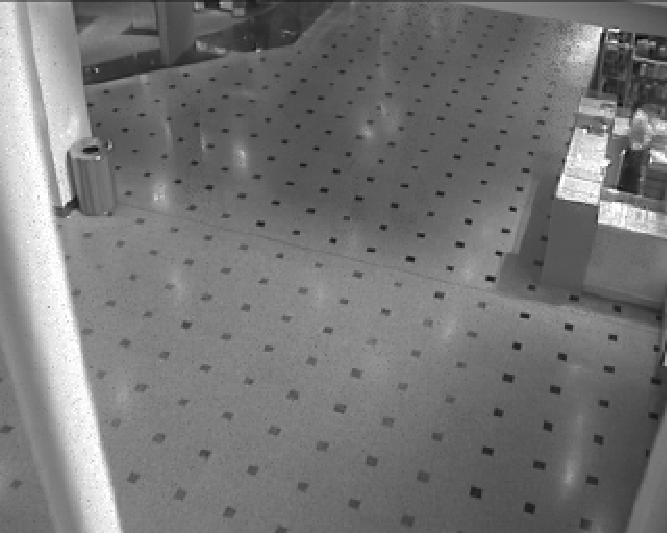}

\includegraphics[width=.195\linewidth]{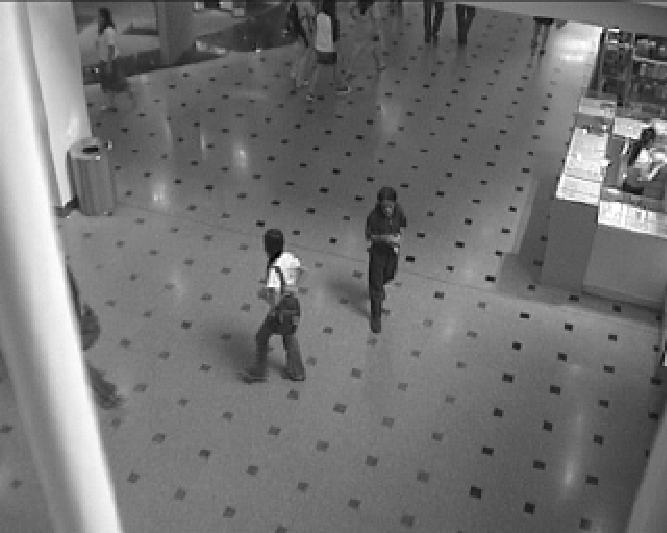} \hfill
\includegraphics[width=.195\linewidth]{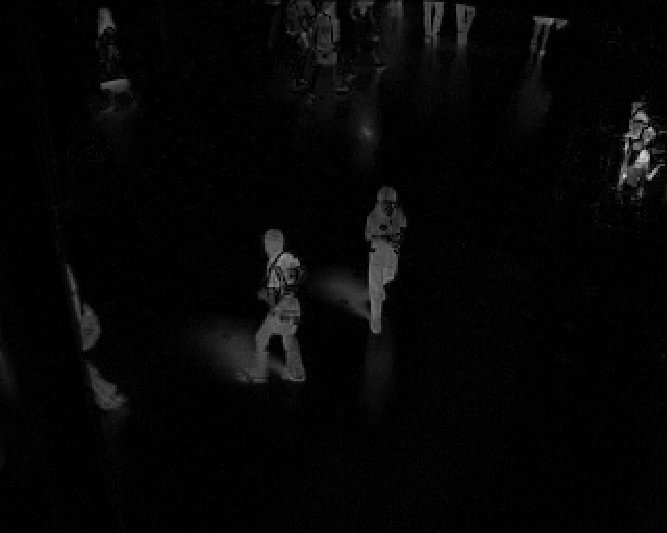} \hfill
\includegraphics[width=.195\linewidth]{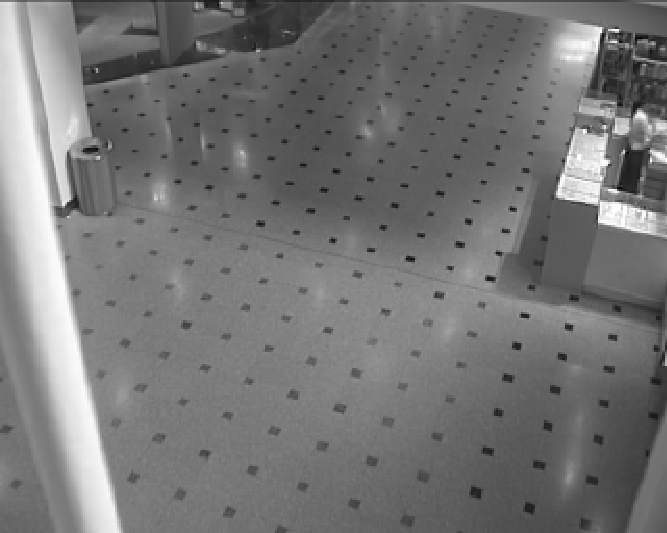} \hfill
\includegraphics[width=.195\linewidth]{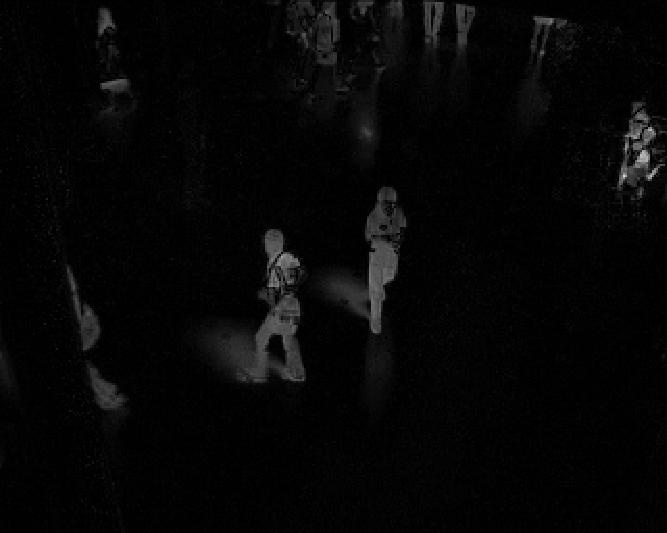} \hfill
\includegraphics[width=.195\linewidth]{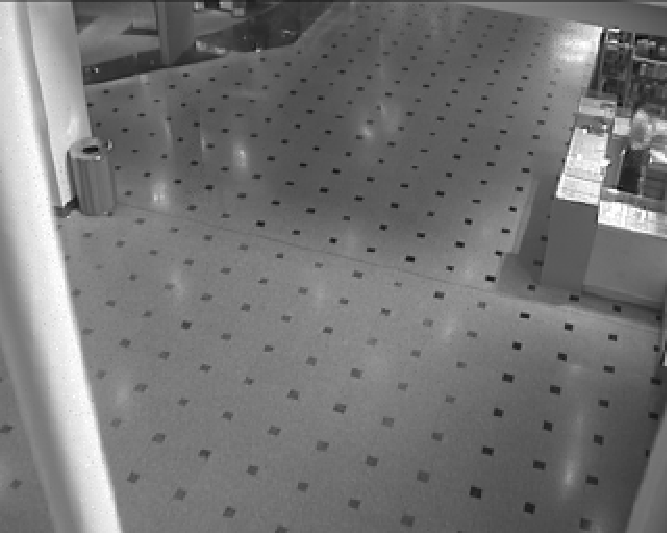}

\includegraphics[width=.195\linewidth]{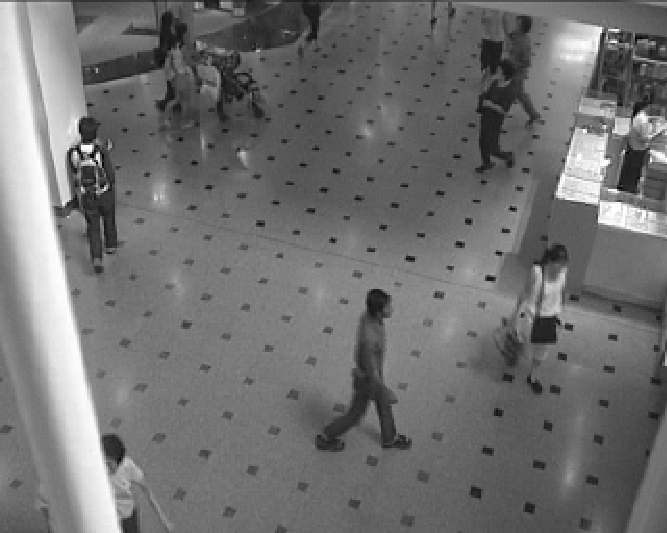} \hfill
\includegraphics[width=.195\linewidth]{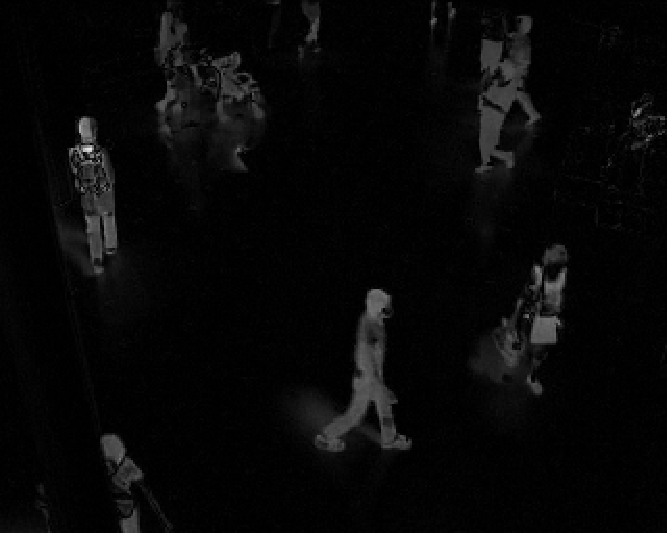} \hfill
\includegraphics[width=.195\linewidth]{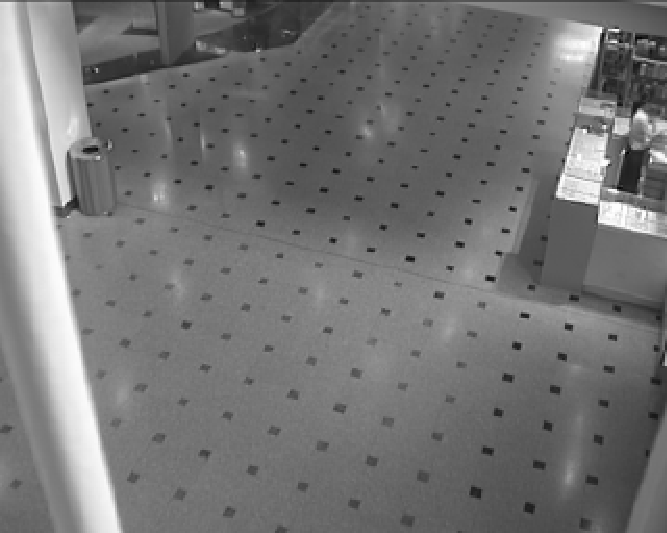} \hfill
\includegraphics[width=.195\linewidth]{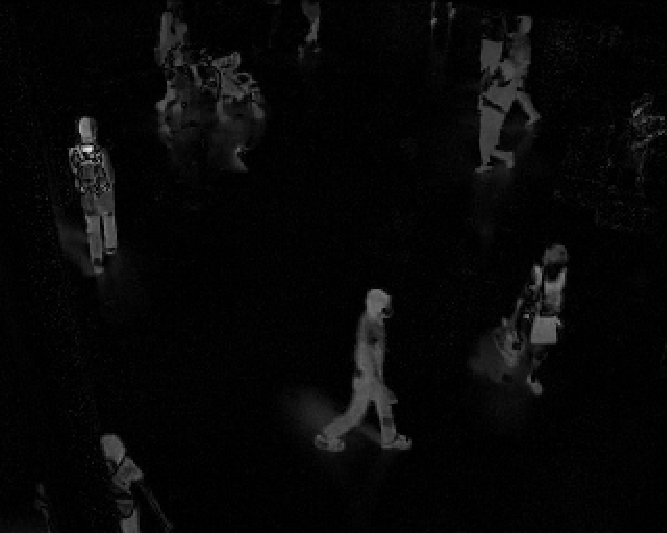} \hfill
\includegraphics[width=.195\linewidth]{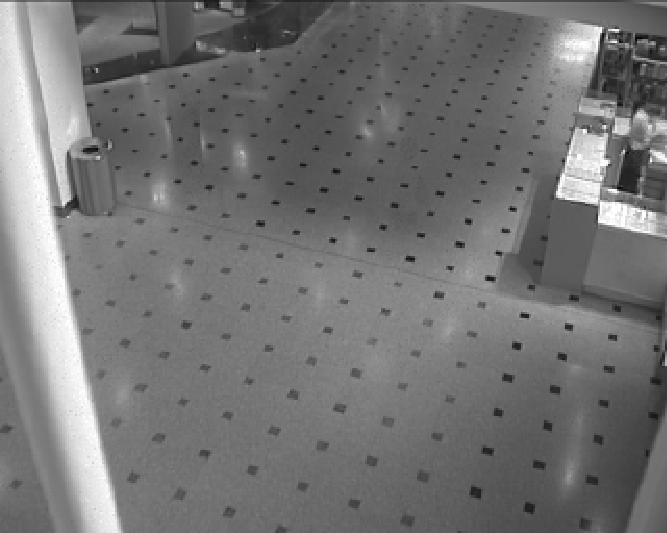}

\caption{Video background subtraction on \textit{shoppingmall} video. Each row corresponds to a frame in the video. The first column is for the original frames. The next two columns are the ``groundtruth'' foreground and background provided by AccAltProj \citep{cai2019accaltproj}, a non-convex RPCA algorithm, with rank 2 and full observation. The last two columns are the foreground and background outputted by the proposed OPSA with $5\times$ overestimated rank $d=10$ and $30\%$ observation.}\label{fig:shoppingmall}
\end{figure}

\begin{figure}[h]
\centering
\includegraphics[width=.195\linewidth]{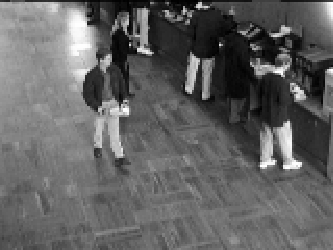} \hfill
\includegraphics[width=.195\linewidth]{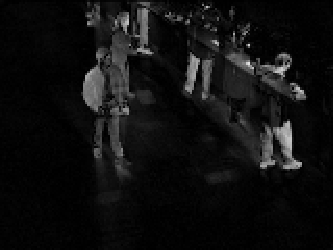} \hfill
\includegraphics[width=.195\linewidth]{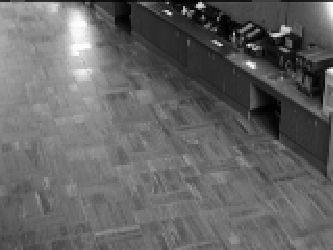} \hfill
\includegraphics[width=.195\linewidth]{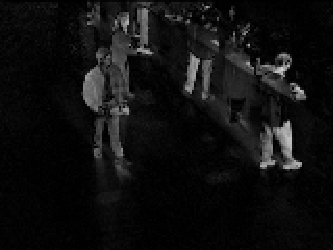} \hfill
\includegraphics[width=.195\linewidth]{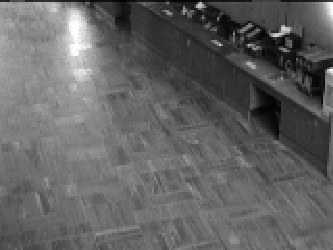}

\includegraphics[width=.195\linewidth]{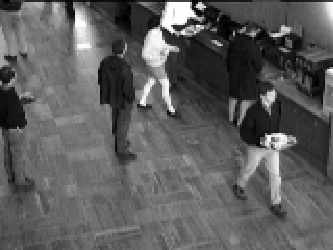} \hfill
\includegraphics[width=.195\linewidth]{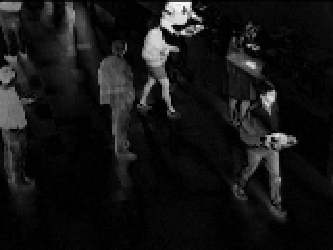} \hfill
\includegraphics[width=.195\linewidth]{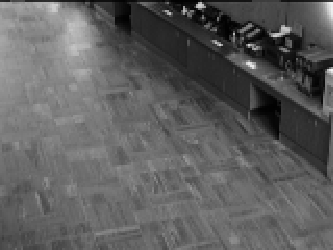} \hfill
\includegraphics[width=.195\linewidth]{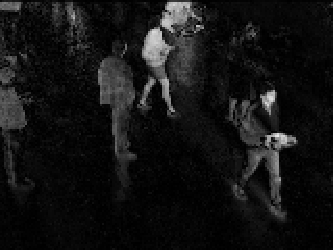} \hfill
\includegraphics[width=.195\linewidth]{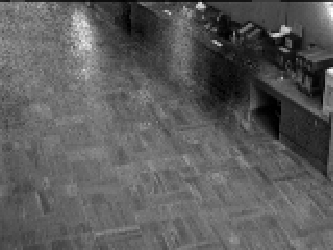}

\includegraphics[width=.195\linewidth]{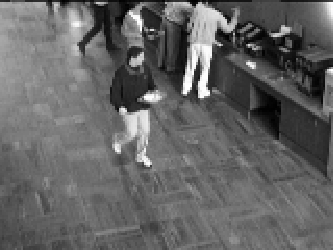} \hfill
\includegraphics[width=.195\linewidth]{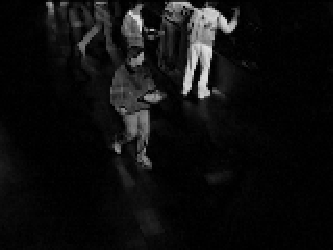} \hfill
\includegraphics[width=.195\linewidth]{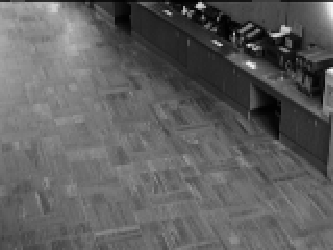} \hfill
\includegraphics[width=.195\linewidth]{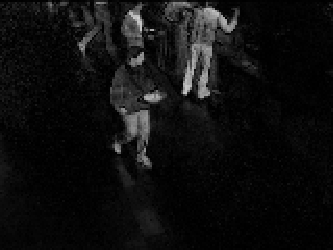} \hfill
\includegraphics[width=.195\linewidth]{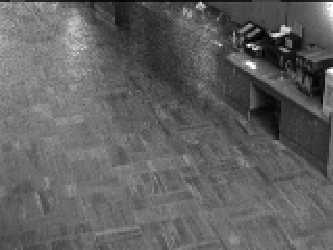}

\caption{Video background subtraction on \textit{restaurant} video. Each row corresponds to a frame in the video. The first column is for the original frames. The next two columns are the ``groundtruth'' foreground and background provided by AccAltProj \citep{cai2019accaltproj}, a non-convex RPCA algorithm, with rank 2 and full observation. The last two columns are the foreground and background outputted by the proposed OPSA with $5\times$ overestimated rank $d=10$ and $30\%$ observation.}\label{fig:restaurant}
\end{figure}

\vfill

\end{document}